\newcommand{\BibTeX}{{\scshape Bib}\kern-.08em\TeX}
\newcommand{\T}{\S\kern .15em\relax }
\newcommand{\AMS}{$\mathcal{A}$\kern-.1667em\lower.5ex\hbox
        {$\mathcal{M}$}\kern-.125em$\mathcal{S}$}
\DeclareMathOperator{\Hom}{Hom}
\DeclareMathOperator{\Image}{Im}
\DeclareMathOperator{\rang}{rk}
\DeclareMathOperator{\Spec}{Spec}
\DeclareMathOperator{\vol}{vol}
\DeclareMathOperator{\indic}{1\!\!1}
\title{Arithmetic Fujita approximation}
\date{\today}
\author{Huayi Chen}
\address{Universit\'e Paris Diderot --- Paris 7, Institut de math\'ematiques de Jussieu.}
\email{chenhuayi@math.jussieu.fr}
\begin{document}
\def\smfbyname{}

\begin{abstract}
We prove an arithmetic analogue of Fujita's
approximation theorem in Arakelov geometry,
conjectured by Moriwaki, by using slope
method and measures associated to $\mathbb
R$-filtrations.
\end{abstract}

\begin{altabstract}
On d\'emontre un analogue arithm\'etique du
th\'eor\`eme d'approximation de Fujita en
g\'eom\'etrie d'Arakelov --- conjectur\'e par
Moriwaki
--- par la m\'ethode de pentes et les mesures
associ\'ees aux $\mathbb R$-filtrations.
\end{altabstract}
\maketitle

\tableofcontents
\section{Introduction}
Fujita approximation is an approximative
version of Zariski decomposition of
pseudo-effective divisors \cite{Zariski62}
which holds for smooth projective surfaces
but fails in general. Let $X$ be a projective
variety defined over a field $K$ and $L$ be a
{\it big} line bundle on $X$, i.e., the {\it
volume} of $L$, defined as
\[\mathrm{vol}(L):=\limsup_{n\rightarrow\infty}
\frac{\rang_KH^0(X,L^{\otimes n})}{ n^{\dim
X}/(\dim X )!},\] is strictly positive. The
Fujita's approximation theorem asserts that,
for any $\varepsilon>0$, there exists a
projective birational morphism
$\nu:X'\rightarrow X$, an integer $p>0$,
together with a decomposition
$\nu^*(L^{\otimes p})\cong A\otimes E$, where
$A$ is an ample line bundle, $E$ is
effective, such that $p^{-{\dim X
}}\mathrm{vol}(A)\geqslant\mathrm{vol}(L)-\varepsilon$.
This theorem had been proved by Fujita
himself \cite{Fujita94} in characteristic $0$
case, before its generalization to any
characteristic case by Takagi
\cite{Takagi07}. It is the source of many
important results concerning big divisors and
volume function in algebraic geometry
context, such as volume function as a limit,
its log-concavity and differentiability, etc.
We refer readers to \cite[11.4]{LazarsfeldII}
for a survey, see also
\cite{Ein_Las_Mus_Nak_Po05,Ein_Las_Mus_Nak_Po06,Bou_Fav_Mat06,Lazarsfeld_Mustata08}.

The arithmetic analogue of volume function
and the arithmetic bigness in Arakelov
geometry have been introduced by Moriwaki
\cite{Moriwaki00,Moriwaki07}. Let $K$ be a
number field and $\mathcal O_K$ be its
integer ring. Let $\mathscr X$ be a
projective arithmetic variety of total
dimension $d$ over $\Spec\mathcal O_K$. For
any Hermitian line bundle $\overline{\mathscr
L}$ on $\mathscr X$, the {\it arithmetic
volume} of $\overline{\mathscr L}$ is defined
as
\begin{equation}
\label{Equ:arithmetic volume function}
\widehat{\mathrm{vol}}(\overline{\mathscr
L}):=\limsup_{n\rightarrow\infty}
\frac{\widehat{h}^0(\mathscr
X,\overline{\mathscr L}^{\otimes n}
)}{n^{d}/d!},\end{equation} where
\[\widehat{h}^0(\mathscr X,\overline{\mathscr L}^{\otimes n})
:=\log\#\{s\in H^0(\mathscr X,\mathscr
L^{\otimes n
})\,|\,\forall\,\sigma:K\rightarrow\mathbb
C,\, \|s\|_{\sigma,\sup}\leqslant 1\}.\]
Similarly, $\overline{\mathscr L}$ is said to
be {\it arithmetically big} if
$\widehat{\mathrm{vol}}(\overline{\mathscr
L})>0$. In \cite{Moriwaki07,Moriwaki08},
Moriwaki has proved that the arithmetic
volume function is continuous with respect to
$\overline{\mathscr L}$, and admits a unique
continuous extension to
$\widehat{\mathrm{Pic}}(\mathscr X)_{\mathbb
R}$. In \cite{Moriwaki07}, he asked the
following question (Remark 5.9 {\it loc.
cit.}): {\it does the Fujita approximation
hold in the arithmetic case?}

The validity of arithmetic Fujita
approximation has many interesting
consequences. For example, assuming that the
arithmetic Fujita approximation is true, then
by arithmetic Riemann-Roch theorem
\cite{Gillet-Soule,Zhang95}, the right side
of \eqref{Equ:arithmetic volume function} is
actually a limit (see \cite[Remark
4.1]{Moriwaki07}).

In \cite{Chen_bigness}, the author proved
that the sequence which defines the
arithmetic volume function converges. This
gives an affirmative answer to a conjecture
of Moriwaki \cite[Remark 4.1]{Moriwaki07}.
Rather than applying the arithmetic analogue
of Fujita approximation, the proof uses its
classical version on the generic fiber and
then appeals to an earlier work of the author
on the convergence of normalized
Harder-Narasimhan polygons, generalizing the
arithmetic Hilbert-Samuel formula (in an
inexplicit way).

One of the difficulty for establishing
arithmetic Fujita approximation is that, if
$\overline{\mathscr A}$ is a ample Hermitian
line subbundle of $\overline{\mathscr L}$
which approximates well $\overline{\mathscr
A}$, then in general the section algebra of
$A_K$ does not approximate that of $L_K$ at
all. In fact, it approximates only the graded
linear series of $L$ generated by small
sections.

In this article, we prove the conjecture of
Moriwaki on the arithmetic Fujita
approximation by using Bost's slope theory
\cite{BostBour96,Bost2001,BostICM} and the
measures associated to $\mathbb
R$-filtrations \cite{Chen08,Chen_bigness}.
The strategy is similar to that in
\cite{Chen_bigness} except that, instead of
using the geometric Fujita approximation in
its classical form, we apply a recent result
of Lazarsfeld and Musta\c{t}\v{a}
\cite{Lazarsfeld_Mustata08} on a very general
approximation theorem for graded linear
series of a big line bundle on a projective
variety, using the theory of Okounkov bodies
\cite{Okounkov96}. It permits us to
approximate the graded linear series of the
generic fiber generated by small sections.
Another important ingredient in the proof is
the comparison of minimum filtration and
slope filtration (Propositions
\ref{Pro:majoration de Fm par Fs} and
\ref{Pro:inversion comparison} {\it infra}),
which relies on the estimations in
\cite{Banaszczyk95,Bost_Kunnemann} for
invariants of Hermitian vector bundles. By
the interpretation of the arithmetic volume
function by integral with respect to limit of
Harder-Narasimhan measures established in
\cite{Chen_bigness}, we prove that the
arithmetic volumes of these subalgebras
approximate the arithmetic volume of the
Hermitian line bundle, and therefore
establish the arithmetic Fujita
approximation.

Shortly after the first version of this
article had been written, X. Yuan told me
that he was working on the same subject and
has obtained the arithmetic Fujita
approximation independently. He also kindly
sent me his article \cite{Yuan08}, where he has
developed an arithmetic analogue of Okounkov
body, inspired by
\cite{Lazarsfeld_Mustata08}. He has also
obtained the log-concavity of the arithmetic
volume function.

The organization of this article is as
follows. In the second section, we introduce
the notion of approximable graded algebras
and study their asymptotic properties. We
then recall the notion of Borel measures
associated to filtered vector spaces. At the
end of the section, we establish a
convergence result for filtered approximable
algebras. The third section is devoted to a
comparison of filtrations on metrized vector
bundles on a number field, which come
naturally from the arithmetic properties of
these objects. We begin by a reminder on
Bost's slope method. Then we introduce the
$\mathbb R$-indexed minimum filtration and
slope filtration for metrized vector bundles
and compare them. We also compare the
asymptotic behaviour of these two types of
filtrations. In the fourth section, we recall
the theorem of Lazarsfeld and Musta\c{t}\v{a}
on the approximability of certain graded
linear series. We then describe some
approximable graded linear series which come
from the arithmetic of a big Hermitian line
bundle on an arithmetic variety. The main
theorem of the article is established in the
fifth section. We prove that the arithmetic
volume of a big Hermitian line bundle can be
approximated by the arithmetic volume of its
graded linear series of finite type, which
implies the Moriwaki's conjecture. Finally in
the sixth section, we prove that, if a graded
linear series generated by small sections
approximates well a big Hermitian line bundle
$\overline{\mathscr L}$, then it also
approximates well the asymptotic measure of
$\overline{\mathscr L}$ truncated at $0$.

\bigskip

\noindent{\bf Acknowledgement }I would like
to thank J.-B. Bost, S. Boucksom and A.
Chambert-Loir for discussions and for their
valuable remarks. I am also grateful to X.
Yuan and S. Zhang for letter communications.

\section{Approximable algebras and asymptotic measures}

In \cite{Chen08,Chen_bigness}, the author has used
measures associated to filtered vector spaces to study
asymptotic invariants of Hermitian line bundles.
Several convergence results have been established for
graded algebras equipped with $\mathbb R$-filtrations,
either under the finite generating condition on the
underlying graded algebra \cite[Theorem 3.4.3]{Chen08},
or under the geometric condition \cite[Theorem
4.2]{Chen_bigness} that the underlying graded algebra
is the section algebra of a big line bundle. However,
as we shall see later in this article, some graded
algebras coming naturally from the arithmetic do not
satisfy these two conditions. In this section, we
generalize the convergence result to a so-called {\it
approximable graded algebra} case.

\subsection{Approximable graded algebras}

In the study of projective varieties, graded algebras
are natural objects which often appear as graded linear
series of a line bundle. In general, such graded
algebras are not always finitely generated. However, according to approximation theorems
due to Fujita \cite{Fujita94}, Takagi \cite{Takagi07},
Lazarsfeld and Musta\c{t}\v{a}
\cite{Lazarsfeld_Mustata08} etc., they can often be
approximated arbitrarily closely by its graded
subalgebras of finite type. Inspired by
\cite{Lazarsfeld_Mustata08}, we formalize this
observation as a notion. In this section, $K$ denotes
an arbitrary field.

\begin{defi}
Let $B=\bigoplus_{n\geqslant 0}B_n$ be an
integral graded $K$-algebra. We say that $B$
is {\it approximable} if the following
conditions are verified:
\begin{enumerate}[(a)]
\item all vector spaces $B_n$ are finite dimensional and $B_n\neq 0$ for sufficiently large $n$;
\item for any $\varepsilon\in(0,1)$, there exists an
integer $p_0\geqslant 1$ such that, for any integer $p\geqslant p_0$, one has
\[\liminf_{n\rightarrow\infty}\frac{\rang(\mathrm{Im}
(S^nB_p\rightarrow
B_{np}))}{\rang(B_{np})}>1-\varepsilon,\] where
$S^nB_p\rightarrow B_{np}$ is the canonical
homomorphism defined by the algebra structure on $B$.
\end{enumerate}
\end{defi}
\begin{rema}\label{Rem:condition for approximation}
The condition (a) serves to exclude the
degenerate case so that the presentation
becomes simpler. In fact, if an integral
graded algebra $B$ is not concentrated on
$B_0$, then by choosing an integer
$q\geqslant 1$ such that $B_q\neq 0$, we
obtain a new graded algebra
$\bigoplus_{n\geqslant 0}B_{nq}$ which
verifies (a). This new algebra often contains
the information about $B$ which interests us.
\end{rema}

\begin{exem}The following are some examples of approximable graded
algebras.
\begin{enumerate}[1)]
\item If $B$ is an integral graded algebra of finite
type such that $B_n\neq 0$ for sufficiently large $n$,
then it is clearly approximable.
\item Let $X$ be a projective variety over $\Spec
K$ and $L$ be a big line bundle on $X$. Then by
Fujita's approximation theorem, the total graded linear
series $\bigoplus_{n\geqslant 0}H^0(X,L^{\otimes n})$
of $L$ is approximable.
\item More generally, Lazarsfeld and Musta\c{t}\v{a}
have shown that, with the notation of 2), any graded
subalgebra of $\bigoplus_{n\geqslant 0}H^0(X,L^{\otimes
n })$ containing an ample divisor and verifying the
condition (a) above is approximable.
\end{enumerate}
We shall revisit the examples 2) and 3) in \S
\ref{SubSec:reminderon geom fujita}.
\end{exem}

The following properties of approximable
graded algebras are quite similar to
classical results on big line bundles.

\begin{prop}\label{Pro:bigness for graded algebras}
Let $B=\bigoplus_{n\geqslant 0}B_n$ be an
integral graded algebra which is
approximable. Then there exists a constant
$a\in\mathbb N\setminus\{0\}$ such that, for
sufficiently large integer $p$, the algebra
$\bigoplus_{n\geqslant
0}\mathrm{Im}(S^{n}B_p\rightarrow B_{np})$
has Krull dimension $a$. Furthermore, denote
by $d(B):=a-1$. The sequence
\begin{equation}\label{Equ:suite de rang normalise}\Big(\frac{\rang B_n}{n^{d(B)}/d(B)!}\Big)_{n\geqslant 1}\end{equation}
converges in $\mathbb R_+$.
\end{prop}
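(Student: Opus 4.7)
The plan is to study the finitely generated graded subalgebras
\[A^{(p)} := \bigoplus_{n \geq 0} \Image(S^n B_p \to B_{np}),\]
each of which is a quotient of $\mathrm{Sym}^\bullet B_p$ and is an integral graded $K$-algebra with $A^{(p)}_0 = K$ generated in degree $1$; in particular its Hilbert function agrees with a polynomial of degree $a(p)-1$ for $n$ large, where $a(p) := \dim A^{(p)}$. Write $\rang(A^{(p)}_n) \sim c_p n^{a(p)-1}$ with $c_p > 0$. Because each $A^{(p)}$ sits inside the domain $B$, the sequence $a(p)$ is bounded above by $\mathrm{trdeg}_K(\mathrm{Frac}\, B)+1$.

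First I would show that $a(p)$ is eventually constant. Fix $\varepsilon \in (0,1)$ and let $p_0 = p_0(\varepsilon)$ be provided by approximability, so that for $p \geq p_0$ and $m$ sufficiently large
\[(1-\varepsilon)\,\rang(B_{mp}) \;\leq\; \rang(A^{(p)}_m) \;\leq\; \rang(B_{mp}).\]
For any $p,q \geq p_0$, substituting $m = qn$ in the bound for $p$ and $m = pn$ in the bound for $q$ shows that $\rang(B_{pqn})$ is simultaneously of polynomial order $n^{a(p)-1}$ and $n^{a(q)-1}$, forcing $a(p) = a(q)$. Let $a$ denote this common value and set $d(B) = a-1$.

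For the convergence of \eqref{Equ:suite de rang normalise}, put $f(n) := \rang(B_n)(a-1)!/n^{a-1}$ and $\beta_p := c_p(a-1)!/p^{a-1}$. The same two-sided inequalities translate, for $p \geq p_0$, into
\[\beta_p \;\leq\; \liminf_{n \to \infty} f(np) \;\leq\; \limsup_{n \to \infty} f(np) \;\leq\; \frac{\beta_p}{1-\varepsilon}.\]
Specializing to the common subsequence $pq\mathbb{N}$ for $p, q \geq p_0$ yields $\beta_p/\beta_q \in [1-\varepsilon,(1-\varepsilon)^{-1}]$; hence $(\beta_p)_{p \geq p_0(\varepsilon)}$ is multiplicatively Cauchy, stays in a fixed compact subinterval of $(0,+\infty)$, and converges to a limit $V > 0$. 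Letting $\varepsilon \to 0$ collapses both $\liminf$ and $\limsup$ of $f(np)$ onto $V$ for every sufficiently large $p$.

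The main obstacle is the passage from convergence along the arithmetic progression $p\mathbb{N}$ to convergence of $f(n)$ itself. The key ingredient is that, since $B$ is a domain and $B_k \neq 0$ for all $k$ large, multiplication by any nonzero element of $B_k$ produces an injection $B_n \hookrightarrow B_{n+k}$, giving a weak monotonicity $\rang(B_n) \leq \rang(B_{n+k})$ whenever $n$ and $k$ are large. Writing an arbitrary large $N$ as $N = np+r$ with $0 \leq r < p$ and applying this monotonicity with increments of size $O(p)$ sandwiches $\rang(B_N)$ between $\rang(B_{n'p})$ and $\rang(B_{n''p})$ with $n', n'' = n + O(1)$; after normalization, $f(N)$ is bracketed by quantities of the form $f(n'p)(n'/n)^{a-1}(1+o(1))$, which tend to $V$. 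This yields $f(n) \to V$ as $n \to \infty$.
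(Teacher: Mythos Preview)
Your proof is correct and follows essentially the same route as the paper: Hilbert polynomials for the finitely generated $A^{(p)}$, the two-sided squeeze $(1-\varepsilon)\rang(B_{np})\leqslant\rang(A^{(p)}_n)\leqslant\rang(B_{np})$ from approximability, and the monotonicity $\rang(B_n)\leqslant\rang(B_{n+k})$ coming from integrality of $B$.

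The one organisational difference worth noting is that your Cauchy argument for $(\beta_p)$ is a detour. Your sandwich in the last paragraph, done carefully, actually yields
\[
\liminf_{N\to\infty} f(N)=\liminf_{n\to\infty} f(np),\qquad
\limsup_{N\to\infty} f(N)=\limsup_{n\to\infty} f(np)
\]
for every large $p$; this is exactly the paper's key identity \eqref{Equ:comparaison de limsup et liminf}. Once you have it, the $\liminf$ and $\limsup$ along $p\mathbb N$ are \emph{independent of $p$}, so the single squeeze $\beta_p\leqslant\liminf\leqslant\limsup\leqslant\beta_p/(1-\varepsilon)$ with $p\to\infty$ finishes immediately, and the separate convergence of $\beta_p$ to some $V$ is not needed. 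This also repairs the one imprecise sentence in your write-up: ``letting $\varepsilon\to 0$ collapses $\liminf$ and $\limsup$ of $f(np)$ onto $V$ for every sufficiently large $p$'' is not literally justified for a fixed $p$ (since $p_0$ depends on $\varepsilon$), but becomes correct once you know these quantities do not depend on $p$.
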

\begin{proof}
Assume that $B_m\neq 0$ for all $m\geqslant m_0$, where
$m_0\in\mathbb N$. Since $B$ is integral, for any integer $n\geqslant 1$ and any integer $m\geqslant m_0$, one has \begin{equation}
\label{Equ:rang Bn+m majore par rang Bn}
\rang(B_{n+m})\geqslant\rang(B_n).\end{equation}
For any integer  $p\geqslant
m_0$, denote by $a(p)$ the Krull dimension of
$\bigoplus_{n\geqslant
0}\mathrm{Im}(S^{np}B_p\rightarrow B_{np})$, and define
\begin{equation}\label{Equ:fp}
f(p):=\displaystyle\liminf_{n\rightarrow\infty}
\frac{\rang(\mathrm{Im}(S^nB_p\rightarrow
B_{np}))}{\rang B_{np}}.\end{equation}
The approximable condition shows that $\displaystyle\lim_{p\rightarrow\infty}f(p)=1$.
Recall
that the classical result on Hilbert
polynomials implies
\[\rang(\mathrm{Im}(S^nB_p\rightarrow
B_{np}))\asymp n^{a(p)-1}\qquad(n\rightarrow\infty).\]
Thus, if $f(p)>0$, then $\rang B_{np}\asymp
n^{a(p)-1}$, and hence  by \eqref{Equ:rang Bn+m majore par rang Bn}, one has $\rang B_n\asymp n^{a(p)-1}$ ($n\rightarrow\infty$). Thus $a(p)$ is constant if $f(p)>0$. In particular, $a(p)$ is constant when $p$ is sufficiently large. We denote by $a$ this constant,
and by $d(B)=a-1$.

In the following, we shall establish the convergence of the sequence \eqref{Equ:suite de rang normalise}. It suffices to establish
\begin{equation}\label{Equ:liminf plus grand que limsup}\liminf_{n\rightarrow\infty}\frac{\rang
B_n}{n^{d(B)}}
\geqslant\limsup_{n\rightarrow\infty}\frac{\rang
B_n}{n^{d(B)}}.\end{equation} By \eqref{Equ:rang Bn+m majore par rang Bn}, for any integer $p\geqslant 1$, one has
\begin{equation}\label{Equ:comparaison de limsup et liminf}
\limsup_{n\rightarrow\infty}\frac{\rang(B_n)}{n^{d(B)}}
=\limsup_{n\rightarrow\infty}\frac{\rang(
B_{np})}{(np)^{d(B)}}\quad\text{and}\quad\liminf_{n\rightarrow\infty}
\frac{\rang(B_n)}{n^{d(B)}}=\liminf_{n\rightarrow\infty}
\frac{\rang(B_{np})}{(np)^{d(B)}}\end{equation} Suppose
that $f(p)>0$. Then  one has
\[\begin{split}
\limsup_{n\rightarrow\infty}\frac{\rang(B_{np})}{(np)^{d(B)}}
&=\Big(\limsup_{n\rightarrow\infty}\frac{\rang(B_{np})}{\rang(\Image(S^nB_p\rightarrow
B_{np} ))}\Big)\cdot\Big(\lim_{n\rightarrow\infty}
\frac{\rang(\Image(S^nB_p\rightarrow
B_{np}))}{(np)^{d(B)}}\Big)\\
&=f(p)^{-1}\lim_{n\rightarrow\infty}
\frac{\rang(\Image(S^nB_p\rightarrow
B_{np}))}{(np)^{d(B)}}\leqslant
f(p)^{-1}\liminf_{n\rightarrow\infty}
\frac{\rang(B_{np})}{(np)^{d(B)}}.
\end{split}\]
Combining with \eqref{Equ:comparaison de limsup et
liminf} and the approximable hypothesis, we obtain
\eqref{Equ:liminf plus grand que limsup}.
\end{proof}

\begin{coro}\label{Cor:Bn+r sur Bn converge vers 1}
For any $r\in\mathbb N$, one has
\[\lim_{n\rightarrow\infty}\frac{\rang(B_{n+r})}{\rang(B_n)}=1.\]
\end{coro}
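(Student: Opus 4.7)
The plan is to deduce this corollary directly from the convergence statement of Proposition~\ref{Pro:bigness for graded algebras}. Write $d=d(B)$ and let $v:=\lim_{n\to\infty}\rang(B_n)/(n^d/d!)$, whose existence is guaranteed by that proposition. If we can show $v>0$, then the identity
\[\frac{\rang(B_{n+r})}{\rang(B_n)}=\frac{\rang(B_{n+r})/((n+r)^d/d!)}{\rang(B_n)/(n^d/d!)}\cdot\Big(\frac{n+r}{n}\Big)^{d}\]
immediately gives the claim, since the first factor tends to $v/v=1$ and the second tends to $1$.

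Hence the only real point is the strict positivity of $v$, and this is already essentially extracted from the argument of the preceding proposition. Concretely, I would fix an integer $p$ large enough that $f(p)>0$; the proof of the proposition then shows that the image subalgebra $\bigoplus_{n\geqslant 0}\Image(S^nB_p\to B_{np})$ has Krull dimension $a=d+1$, so that $\rang(\Image(S^nB_p\to B_{np}))\asymp n^{d}$ as $n\to\infty$, with a strictly positive asymptotic coefficient. Since $\rang(B_{np})\geqslant\rang(\Image(S^nB_p\to B_{np}))$, we obtain a positive lower bound of the form $\rang(B_{np})/(np)^d\geqslant c/p^d$ for all large $n$, forcing $v\geqslant c/p^d>0$.

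The only step that requires any care is therefore the positivity of $v$; once that is in hand, the corollary is a two-line computation. I do not expect a serious obstacle, as all the necessary estimates are already present in the proof of Proposition~\ref{Pro:bigness for graded algebras}.
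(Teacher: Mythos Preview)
Your proposal is correct and matches the paper's intended argument: the corollary is stated without proof immediately after Proposition~\ref{Pro:bigness for graded algebras}, as a direct consequence of the convergence established there together with the positivity of the limit (which is already implicit in the proof of that proposition via the relation $\rang B_n\asymp n^{d(B)}$). Your explicit justification of $v>0$ is exactly the point one needs, and your extraction of it from the Hilbert-polynomial estimate for the image subalgebra is the natural one.
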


\begin{defi}
Let $B$ be an integral graded $K$-algebra which is
approximable. We denote by $\mathrm{vol}(B)$ the limit
\[\mathrm{vol}(B):=\lim_{n\rightarrow\infty}\frac{\rang(B_n)}{n^{d(B)}/d(B)!}.\]
Note that, if $B$ is the total graded linear series of
a big line bundle $L$, then $\mathrm{vol}(B)$ is just
the volume of the line bundle $L$.
\end{defi}

\begin{rema}
It might be interesting to know whether any
approximable graded algebra can always be realized as a
graded linear series of a big line bundle.
\end{rema}

\subsection{Reminder on $\mathbb R$-filtrations}
Let $K$ be a field and $W$ be a vector space of finite
rank over $K$. For {\it filtration} on $W$ we mean a
sequence $\mathcal F=(\mathcal F_tW)_{t\in\mathbb R}$
of vector subspaces of $W$, satisfying the following
conditions:
\begin{enumerate}[1)]
\item if $t\leqslant s$, then $\mathcal F_sW\subset\mathcal
F_tW$;
\item $\mathcal F_tW=0$ for sufficiently positive $t$,
$\mathcal F_tW=W$ for sufficiently negative $t$;
\item the function $t\mapsto\rang(\mathcal F_tW)$ is
left continuous.
\end{enumerate}
The couple $(W,\mathcal F)$ is called a {\it filtered
vector space}.

If $W\neq 0$, we denote by $\nu_{(W,\mathcal F)}$ (or
simply $\nu_W$ if this does not lead to any ambiguity)
the Borel probability measure obtained by taking the
derivative (in the sense of distribution) of the
function $t\mapsto -\rang\mathcal F_tW/\rang W$. If
$W=0$, then there is a unique filtration on $W$ and we
define $\nu_0$ to be the zero measure by convention.
Note that the measure $\nu_W$ is actually a linear
combination of Dirac measures.

All filtered vector spaces and linear maps preserving
filtrations form an exact category. The following
proposition shows that mapping $(W,\mathcal
F)\mapsto\nu_{(W,\mathcal F)}$ behaves well with
respect to exact sequences.
\begin{prop}\label{Pro:suite exacte courte}
Assume that \[\xymatrix{0\ar[r]&(W',\mathcal
F')\ar[r]&(W,\mathcal F)\ar[r]&(W'',\mathcal
F'')\ar[r]&0}\] is an exact sequence of filtered vector
spaces. Then
\[\nu_W=\frac{\rang W'}{\rang W}\nu_{W'}+\frac{\rang W''}{\rang W}
\nu_{W''}.\]
\end{prop}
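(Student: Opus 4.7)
The plan is to reduce the identity of measures to a pointwise identity on the ranks of the filtration subspaces, and then differentiate. The definition of an exact sequence in the exact category of filtered vector spaces means that $\mathcal{F}'$ is the filtration induced from $\mathcal{F}$ by restriction and $\mathcal{F}''$ is the quotient filtration; equivalently, for every $t\in\mathbb{R}$, the sequence
\[\xymatrix{0\ar[r]&\mathcal{F}'_tW'\ar[r]&\mathcal{F}_tW\ar[r]&\mathcal{F}''_tW''\ar[r]&0}\]
is short exact. This is the first step I would spell out, since everything hinges on this identification of the filtrations in the middle and on the ends.

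From this pointwise short exact sequence I obtain the numerical identity $\rang\mathcal{F}_tW=\rang\mathcal{F}'_tW'+\rang\mathcal{F}''_tW''$, valid for all $t\in\mathbb{R}$. Dividing by $\rang W$ and negating gives
\[-\frac{\rang\mathcal{F}_tW}{\rang W}=\frac{\rang W'}{\rang W}\cdot\Bigl(-\frac{\rang\mathcal{F}'_tW'}{\rang W'}\Bigr)+\frac{\rang W''}{\rang W}\cdot\Bigl(-\frac{\rang\mathcal{F}''_tW''}{\rang W''}\Bigr),\]
with the convention that the term associated to a zero space is simply dropped (or interpreted via $\nu_0=0$). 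Both sides are left-continuous non-decreasing step functions of $t$ with the same behaviour at $\pm\infty$ (they go from $-1$ at $t\to-\infty$ to $0$ at $t\to+\infty$, with the appropriate convex combination on the right).

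Finally, I take the distributional derivative in $t$. On the left this yields $\nu_W$ by definition; on the right, linearity of the derivative and the definitions of $\nu_{W'}$ and $\nu_{W''}$ give the claimed convex combination. The only genuine point to verify, which I would state as the main content of the argument, is the exactness of $0\to\mathcal{F}'_tW'\to\mathcal{F}_tW\to\mathcal{F}''_tW''\to 0$ for each $t$; once this is in place, the rest is bookkeeping. There is no real obstacle, only the mild subtlety of handling the degenerate cases $W'=0$ or $W''=0$ via the convention $\nu_0=0$ introduced just before the statement.
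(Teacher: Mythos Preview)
Your proof is correct and follows exactly the paper's approach: the paper also records the rank identity $\rang(\mathcal F_tW)=\rang(\mathcal F'_tW')+\rang(\mathcal F''_tW'')$ for all $t$ and then takes the distributional derivative. You have simply spelled out more carefully why that pointwise rank identity holds and how the degenerate cases are handled.
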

\begin{proof}
For any $t\in\mathbb R$, one has
\[\rang(\mathcal F_tW)=\rang(\mathcal F'_tW')+\rang(\mathcal F''_tW''),\]
which implies the proposition by taking the
derivative in the sense of distribution.
\end{proof}

\begin{coro}\label{Cor:mesure d'un sous espace}
Let $(W,\mathcal F)$ be a non-zero filtered
vector space, $V\subset W$ be a non-zero
subspace, equipped with the induced
filtration,
$\varepsilon=1-\rang(V)/\rang(W)$. Then for
any bounded Borel function $h$ on $\mathbb
R$, one has
\[\bigg|\int h\,\mathrm{d}\nu_W-\int h\,\mathrm{d}\nu_V\bigg|
\leqslant 2\varepsilon\|h\|_{\sup}.\]
\end{coro}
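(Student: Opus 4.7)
The plan is to realize the inclusion $V\subset W$ as the left term of a short exact sequence and apply Proposition~\ref{Pro:suite exacte courte}. Concretely, equip the quotient $W'' := W/V$ with the quotient filtration, so that $\xymatrix{0\ar[r]&V\ar[r]&W\ar[r]&W''\ar[r]&0}$ becomes an exact sequence of filtered $K$-vector spaces. Setting $\varepsilon = 1 - \rang(V)/\rang(W) = \rang(W'')/\rang(W)$, Proposition~\ref{Pro:suite exacte courte} immediately yields
\[\nu_W = (1-\varepsilon)\,\nu_V + \varepsilon\,\nu_{W''}.\]

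From here I would integrate $h$ against both sides and rearrange to obtain
\[\int h\,\mathrm{d}\nu_W - \int h\,\mathrm{d}\nu_V = \varepsilon\biggl(\int h\,\mathrm{d}\nu_{W''} - \int h\,\mathrm{d}\nu_V\biggr).\]
Since $\nu_V$ and $\nu_{W''}$ are probability measures (in the case $W''\neq 0$; if $W''=0$ the statement is trivial because $\varepsilon=0$ and both integrals coincide), each of the two integrals inside the parentheses has absolute value at most $\|h\|_{\sup}$, so the triangle inequality gives the desired bound $2\varepsilon\|h\|_{\sup}$.

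There is essentially no obstacle in this argument; the only point that deserves a brief mention is the degenerate case $V=W$, handled by the convention $\nu_0 = 0$ fixed just before Proposition~\ref{Pro:suite exacte courte}, and the observation that the induced and quotient filtrations do assemble into an exact sequence in the category of filtered vector spaces (which is exactly what Proposition~\ref{Pro:suite exacte courte} requires). Thus the corollary follows by a one-line consequence of the previous proposition combined with the trivial sup-norm estimate for probability measures.
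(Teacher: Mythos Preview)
Your proof is correct and follows essentially the same approach as the paper: both use Proposition~\ref{Pro:suite exacte courte} applied to the short exact sequence $0\to V\to W\to W/V\to 0$, derive $\nu_W=(1-\varepsilon)\nu_V+\varepsilon\nu_{W/V}$, and bound the difference of integrals by $2\varepsilon\|h\|_{\sup}$ using that both measures are probabilities. The handling of the degenerate case $V=W$ is also the same.
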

\begin{proof}
The case where $W=V$ is trivial. In the
following, we assume that $U:=W/V$  is
non-zero, and is equipped with the quotient
filtration. By Proposition \ref{Pro:suite
exacte courte}, one has
\[\nu_W=(1-\varepsilon)\nu_V+\varepsilon\nu_U=\nu_V+
\varepsilon(\nu_U-\nu_V).\] Therefore
\[\bigg|\int h\,\mathrm{d}\nu_W-\int h\,\mathrm{d}\nu_V
\bigg|=\varepsilon\bigg|\int h\,\mathrm{d}\nu_U-\int
h\,\mathrm{d}\nu_V\bigg|\leqslant
2\varepsilon\|h\|_{\sup}.\]
\end{proof}

Let $(W,\mathcal F)$ be a filtered vector space. We
denote by $\lambda:W\rightarrow\mathbb
R\cup\{+\infty\}$ the mapping which sends $x\in W$ to
\[\lambda(x):=\sup\{a\in\mathbb R\,|\,x\in\mathcal F_aW\}.\]
The function $\lambda$ takes values in $\mathrm{supp}
(\nu_W)\cup\{+\infty\}$, and is finite on
$W\setminus\{0\}$. We define
\begin{equation}\label{Equ:lambda}\lambda_{\max}(W)=\max_{x\in
W\setminus\{0\}}\lambda(x)\quad\text{and}\quad
\lambda_{\min}(W)=\min_{x\in
W}\lambda(x).\end{equation} Note that when $W\neq 0$,
one always has $\lambda_{\min}(W)\leqslant\int
x\,\nu_W(\mathrm{d}x)\leqslant\lambda_{\max}(W)$.
However, $\lambda_{\min}(0)=+\infty$ and
$\lambda_{\max}(0)=-\infty$.

We introduce an order ``$\prec$'' on the space
$\mathscr M$ of all Borel probability measures on
$\mathbb R$. Denote by $\nu_1\prec\nu_2$, or
$\nu_2\succ\nu_1$ the relation:
\begin{quote}
\it for any bounded increasing function $h$ on $\mathbb
R$, $\int h\,\mathrm{d}\nu_1\leqslant\int
h\,\mathrm{d}\nu_2$.
\end{quote}
For any $x\in\mathbb R$, denote by $\delta_x$ the Dirac
measure concentrated at $x$. For any $a\in\mathbb R$,
let $\tau_a$ be the operator acting on the space
$\mathscr M$ which sends a measure $\nu$ to its direct
image by the map $x\mapsto x+a$.

\begin{prop}\label{Pro:mesure et comparaison def iltration}
Let $(V,\mathcal F)$ and $(W,\mathcal G)$ be
non-zero filtered vector spaces. Assume that
$\phi:V\rightarrow W$ is an isomorphism of
vector spaces and $a$ is a real number such
that $\phi(\mathcal F_tV)\subset\mathcal
G_{t+a}W$ holds for all $t\in\mathbb R$, or
equivalently, $\forall x\in V$,
$\lambda(\phi(x))\geqslant\lambda(x)+a$, then
$\nu_W\succ\tau_a\nu_V$.
\end{prop}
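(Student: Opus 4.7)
The plan is to translate the filtration hypothesis into a pointwise inequality between the right-tail functions of the two measures $\tau_a\nu_V$ and $\nu_W$, and then to invoke the standard equivalence between such tail inequalities and the stochastic dominance relation $\prec$ on Borel probability measures on $\mathbb R$.

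First, since $\phi$ is an isomorphism it is in particular injective, and so the hypothesis $\phi(\mathcal F_t V)\subset\mathcal G_{t+a}W$ yields the rank inequality $\rang(\mathcal F_t V)\leqslant\rang(\mathcal G_{t+a} W)$ for every $t\in\mathbb R$. Setting $r=\rang V=\rang W$ and substituting $s=t+a$, this reads
\[\frac{\rang(\mathcal F_{s-a}V)}{r}\leqslant\frac{\rang(\mathcal G_sW)}{r}\qquad(s\in\mathbb R).\]
Next, I identify each side with the right-tail of the corresponding measure. By definition $\nu_V$ is the distributional derivative of the left-continuous decreasing step function $-\rang(\mathcal F_\bullet V)/r$; a direct check (or a small test-function computation on each jump) shows that $\nu_V([t,\infty))=\rang(\mathcal F_t V)/r$ for every $t\in\mathbb R$, and similarly for $\nu_W$. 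Combined with the elementary identity $(\tau_a\nu_V)([s,\infty))=\nu_V([s-a,\infty))$, the displayed rank inequality becomes
\[(\tau_a\nu_V)([s,\infty))\leqslant\nu_W([s,\infty))\qquad(s\in\mathbb R).\]

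The last step is the standard fact that, for Borel probability measures $\mu,\mu'$ on $\mathbb R$, a uniform domination of right-tails $\mu([s,\infty))\leqslant\mu'([s,\infty))$ is equivalent to $\int h\,\mathrm{d}\mu\leqslant\int h\,\mathrm{d}\mu'$ for every bounded increasing Borel function $h$. Applied to $\mu=\tau_a\nu_V$ and $\mu'=\nu_W$, this yields $\tau_a\nu_V\prec\nu_W$, which is precisely the conclusion. In the present discrete setting the equivalence admits a transparent direct proof: every bounded increasing $h$ is the uniform limit of finite non-negative linear combinations of a constant function and indicators $\mathbf 1_{[s,\infty)}$, against which integration recovers the tail functions. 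No step presents a genuine difficulty; the only point requiring some care is the identification of $\rang(\mathcal F_t V)/r$ with $\nu_V([t,\infty))$, which depends on the left-continuity convention imposed on the filtration.
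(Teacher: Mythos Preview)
Your argument is correct and complete: the rank inequality $\rang(\mathcal F_tV)\leqslant\rang(\mathcal G_{t+a}W)$, the identification $\nu_V([t,\infty))=\rang(\mathcal F_tV)/r$ via left-continuity, and the standard equivalence between right-tail domination and the order $\prec$ are all valid. The paper itself does not give a proof here but simply refers to \cite[Lemma~1.2.6]{Chen08}; your self-contained tail-function argument is the expected direct proof and matches what that reference contains.
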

See \cite[Lemma 1.2.6]{Chen08} for proof.

\subsection{Convergence of measures of an approximable algebra}
\label{Subsec:converges de mesure} Let $B$ be
an integral graded algebra, assumed to be
approximable. Let $f:\mathbb
N\rightarrow\mathbb R_+$ be a mapping. Assume
that, for each integer $n\geqslant 0$, the
vector space $B_n$ is equipped with an
$\mathbb R $-filtration $\mathcal F$ such
that $B$ is $f$-quasi-filtered, that is,
there exists $n_0\in\mathbb N$ such that, for
any integer $l\geqslant 2$, and all
homogeneous elements $x_1,\cdots,x_l$ in $B$
of degrees $n_1,\cdots,n_l$ in $\mathbb
Z_{\geqslant n_0}$, respectively, one has
\[\lambda(x_1\cdots x_l)\geqslant\sum_{i=1}^l\big(\lambda(x_i)-f(n_i)\big),\]
where $\lambda$ is the function defined in
\eqref{Equ:lambda}.

For any $\varepsilon>0$, let $T_\varepsilon$
be the operator acting on the space $\mathscr
M$ of all Borel probability measures which
sends $\nu\in\mathscr M$ to its direct image
by the mapping $x\mapsto\varepsilon x$.

The purpose of this subsection is to establish the
following convergence result, which is a generalization
of \cite[Theorem 4.2]{Chen_bigness}.

\begin{theo}\label{Thm:convergence de mesures}
Let $B$ be an approximable graded algebra
equipped with filtrations as above such that
$B$ is $f$-quasi-filtered. Assume in addition
that
\[\displaystyle\sup_{n\geqslant
1}\lambda_{\max}(B_n)/n<+\infty \quad\text{and}\quad
\displaystyle\lim_{n\rightarrow\infty}f(n)/n=0.\] Then
the sequence $(\lambda_{\max}(B_n)/n)_{n\geqslant 1}$
converges in $\mathbb R$, and the measure sequence
$(T_{\frac 1n}\nu_{B_n})_{n\geqslant 1}$ converges
vaguely to a Borel probability measure on $\mathbb R$.
\end{theo}
\begin{rema}
We say that a sequence $(\nu_n)_{n\geqslant
1}$ of Borel measures on $\mathbb R$
converges vaguely to a Borel measure $\nu$
if, for any continuous function $h$ on
$\mathbb R$ whose support is compact, one has
\[\lim_{n\rightarrow+\infty}\int h\,\mathrm{d}\nu_n
=\int h\,\mathrm{d}\nu.\]
\end{rema}
\begin{proof}
The first assertion has been established in
\cite[Proposition 3.2.4]{Chen08} in a more general
setting without the approximable condition on $B$. Here
we only prove the second assertion.

Assume that $B_n\neq 0$ holds for any
$n\geqslant m_0$, where $m_0\geqslant n_0$ is
an integer, and denote by $\nu_n=T_{\frac
1n}\nu_{B_n}$. The supports of $\nu_n$ are
uniformly bounded from above since
$\sup_{n\geqslant
1}\lambda_{\max}(B_n)/n<+\infty$. Let $p$ be
an integer such that $p\geqslant m_0$. Denote
by $A^{(p)}$ be the graded subalgebra of $B$
generated by $B_p$. For any integer
$n\geqslant 1$, we equipped each vector space
$A^{(p)}_{np}$ with the induced filtration,
and denote by $\nu_n^{(p)}:=T_{\frac
1{np}}\nu_{A^{(p)}_{np}}$. Furthermore, we
choose, for any $r\in\{p+1,\cdots,2p-1\}$, a
non-zero element $e_r\in B_r$, and define
\begin{gather*}
M_{n,r}^{(p)}=e_rB_{np}\subset B_{np+r},\quad
N_{n,r}^{(p)}=e_{3p-r}M_{n,r}^{(p)}\subset
B_{(n+3)p},\\
a_{n,r}^{(p)}=\frac{\lambda(e_{3p-r})-f(np+r)-
f(3p-r)}{np},\quad b_{n,r}^{(p)}=a_{n,r}^{(p)}+
\frac{\lambda(e_r)-f(np)-f(r)}{np},\\
\nu_{n,r}^{(p)}=T_{\frac 1{np}}\nu_{M_{n,r}^{(p)}},
\quad\eta_{n,r}^{(p)}=T_{\frac1{np}}\nu_{N_{n,r}^{(p)}}.
\end{gather*}
Note that, for all $x\in B_{np}$, $y\in M_{n,r}^{(p)}$,
one has
\begin{gather*}
\lambda(e_rx)\geqslant\lambda(x)+\lambda(e_r)-f(np)-f(r),\\
\lambda(e_{3p-r}y)\geqslant\lambda(y)+\lambda(e_{3p-r})-f(3p-r)
-f(np+r).
\end{gather*}
By Proposition \ref{Pro:mesure et comparaison def
iltration}, one has
\[\eta_{n,r}^{(p)}\succ\tau_{a_{n,r}^{(p)}}\nu_{n,r}^{(p)}\succ\tau_{b_{n,r}^{(p)}}
\nu_{np}.\] Let $h(x)$ be a bounded
increasing and continuous function on
$\mathbb R$ whose support is bounded from
below, and which is constant when $x$ is
sufficiently positive. One has
\begin{equation}\label{Equ:encadrement}
\int h\,\mathrm{d}\eta_{n,r}^{(p)}\geqslant \int
h\,\mathrm{d}\tau_{a_{n,r}^{(p)}}\nu_{n,r}^{(p)}\geqslant\int
h\,\mathrm{d}\tau_{b_{n,r}^{(p)}}\nu_{np}.
\end{equation}
Note that $|h(x+\varepsilon x)-h(x)|$ converges
uniformly to zero when $\varepsilon\rightarrow 0$. By
Corollaries \ref{Cor:mesure d'un sous espace} and
\ref{Cor:Bn+r sur Bn converge vers 1}, we obtain
\begin{gather}
\label{Equ:eta et nu} \lim_{n\rightarrow\infty}
\bigg|\int h\,\mathrm{d}\eta_{n,r}^{(p)}-\int
h\,\mathrm{d}\nu_{(n+3)p}\bigg|=0,\\
\label{Equ:nunrp et nu np+r}\lim_{n\rightarrow\infty}
\bigg|\int h\,\mathrm{d}\nu_{n,r}^{(p)}-\int
h\,\mathrm{d}\nu_{np+r}\bigg|=0.
\end{gather}
Note that $|h(x+u)-h(x)|$ converges uniformly to zero
when $u\rightarrow 0$. Combining with the fact that
$\displaystyle\lim_{n\rightarrow\infty}a_{n,r}^{(p)}=\lim_{n\rightarrow\infty}
b_{n,r}^{(p)}=0$, we obtain
\begin{gather}
\label{Equ:tau a}\lim_{n\rightarrow\infty}\bigg|\int
h\,\mathrm{d}\tau_{a_{n,r}^{(p)}}\nu_{n,r}^{(p)}-\int
h\,\mathrm{d}\nu_{n,r}^{(p)}\bigg|=0,\\
\label{Equ:tau b} \lim_{n\rightarrow\infty} \bigg|\int
h\,\mathrm{d}\tau_{b_{n,r}^{(p)}}\nu_{np}-\int
h\,\mathrm{d}\nu_{np}\bigg|=0.
\end{gather}
Thus
\begin{equation}\label{Equ:estimation de limsup}\begin{split}
&\quad\;\limsup_{n\rightarrow\infty}\bigg|\int
h\,\mathrm{d}\nu_{np+r}-\int
h\,\mathrm{d}\nu_{np}|=\limsup_{n\rightarrow\infty}\bigg|\int
h\,\mathrm{d} \tau_{a_{n,r}^{(p)}}\nu_{n,r}^{(p)}-\int
h\,\mathrm{d}
\tau_{b_{n,r}^{(p)}}\nu_{np}\bigg|\\
&\leqslant\limsup_{n\rightarrow\infty}\bigg| \int
h\,\mathrm{d}\eta_{n,r}^{(p)}-\int
h\,\mathrm{d}\tau_{b_{n,r}^{(p)}}\nu_{np}\bigg|=\limsup_{n\rightarrow\infty}\bigg|\int
h\,\mathrm{d}\nu_{(n+3)p}-\int
h\,\mathrm{d}\nu_{np}\bigg|,
\end{split}\end{equation}
where the first equality comes from \eqref{Equ:nunrp et
nu np+r}, \eqref{Equ:tau a} and \eqref{Equ:tau b}. The
inequality comes from \eqref{Equ:encadrement}, and the
second equality results from \eqref{Equ:eta et nu} and
\eqref{Equ:tau b}.

Let $\varepsilon\in (0,1)$. By the
approximability condition on $B$, there
exists two integers $p\geqslant m_0$ and
$n_1\geqslant 1 $ such that, for any integer
$n\geqslant n_1$, one has
\[\frac{\rang A_{np}^{(p)}}{\rang B_{np}}>1-\varepsilon.\]
Therefore, by Corollary \ref{Cor:mesure d'un sous
espace}, one has
\begin{equation}\label{Equ:nu np et nu n pussance p}
\bigg|\int h\,\mathrm{d}\nu_{np}-\int
h\,\mathrm{d}\nu_{n}^{(p)}\bigg|\leqslant
2\varepsilon\|h\|_{\sup}.
\end{equation}
As $A^{(p)}$ is an algebra of finite type, by
\cite[Theorem 3.4.3]{Chen08}, the sequence of
measures $(\nu_n^{(p)})_{n\geqslant 1}$
converges vaguely to a Borel probability
measure $\nu^{(p)}$. Note that the supports
of measures $\nu_n^{p}$ are uniformly bounded
from above. Hence $(\int
h\,\mathrm{d}\nu_n^{(p)})_{n\geqslant 1}$ is
a Cauchy sequence. After the relations
\eqref{Equ:estimation de limsup} and
\eqref{Equ:nu np et nu n pussance p}, we
obtain that, there exists an integer
$n_2\geqslant 1$ such that, for any integers
$m$ and $n$, $m\geqslant n_2$, $n\geqslant
n_2$, one has
\[\bigg|\int h\,\mathrm{d}\nu_n-\int h\,\mathrm{d}\nu_m\bigg|
\leqslant
8\varepsilon\|h\|_{\sup}+\varepsilon.\] Since
$\varepsilon$ is arbitrary, the sequence
$(\int h\,\mathrm{d}\nu_n)_{n\geqslant 1}$
converges in $\mathbb R$. Denote by
$C_0^\infty(\mathbb R)$ the space of all
smooth functions of compact support on
$\mathbb R$. Since any function in
$C_0^\infty(\mathbb R)$ can be written as the
difference of two continuous increasing and
bounded functions whose supports are both
bounded from below which are constant on a
neighbourhood of $+\infty$, we obtain that
\[h\longmapsto\lim_{n\rightarrow\infty} h\,\mathrm{d}\nu_n\]
is a well defined positive continuous linear
functional on $(C_0^{\infty}(\mathbb
R),\|\cdot\|_{\sup})$. As
$C_0^{\infty}(\mathbb R)$ is dense in the
space $C_c(\mathbb R)$ of all continuous
functions of compact support on $\mathbb R$
with respect to the topology induced by
$\|\cdot\|_{\sup}$, the linear functional
extends continuously to a Borel measure $\nu$
on $\mathbb R$. Finally, by Corollary
\ref{Cor:mesure d'un sous espace} and by
passing to the limit, we obtain that for any
$p\geqslant m_0$, one has
\[|1-\nu(\mathbb R)|=|\nu^{(p)}(\mathbb R)-\nu(\mathbb R)|\leqslant 1-f(p),\]
where $f(p)$ was defined in \eqref{Equ:fp}. As
$\displaystyle\lim_{p\rightarrow\infty}f(p)=1$, $\nu$
is a probability measure.
\end{proof}

\section{Comparison of filtrations on metrized vector bundles}

Let $K$ be a number field and $\mathcal O_K$
be its integer ring. Denote by $\delta_K$ the
degree of $K$ over $\mathbb Q$. For {\it
metrized vector bundle} on $\Spec\mathcal
O_K$ we mean a projective $\mathcal
O_K$-module $E$ together with a family
$(\|\cdot\|_\sigma)_{\sigma:K\rightarrow\mathbb
C}$, where $\|\cdot\|_{\sigma}$ is a norm on
$E_{\sigma,\mathbb C}$, assumed to be
invariant by the complex conjugation. We
often use the expression $\overline E$ to
denote the couple
$(E,(\|\cdot\|_\sigma)_{\sigma:K\rightarrow\mathbb
C})$. If for each $\sigma$,
$\|\cdot\|_\sigma$ is a Hermitian norm, we
say that $\overline E$ is a {\it Hermitian
vector bundle}. Any metrized vector bundle of
rank one is necessarily a Hermitian vector
bundle, and we call it a {\it Hermitian line
bundle}.

As pointed out by Gaudron \cite[\S
3]{Gaudron07}, the category of metrized
vector bundles on $\Spec\mathcal O_K$ is
equivalent to that of adelic vector bundles
on $\Spec K$.

On a metrized vector bundle on $\Spec\mathcal O_K$, one
has a natural filtration defined by successive minima.
On a Hermitian vector bundle, there is another
filtration defined by successive slopes. In this
section, we compare these two filtrations.

\subsection{Reminder on the slope
method}\label{SubSec:slope method} In this
section, we recall some notions and results
of Bost's slope method. The references are
\cite{BostBour96,Bost2001,Chambert,BostICM}.

Let $\overline L$ be a Hermitian line bundle
on $\Spec\mathcal O_K$. The {\it Arakelov
degree} of $\overline L$ is defined as
\[\widehat{\mathrm{deg}}(\overline L):=\log\#(L/\mathcal O_Ks)
-\sum_{\sigma:K\rightarrow\mathbb
C}\log\|s\|_\sigma,\] where $s$ is an
arbitrary non-zero element in $L$. This
definition does not depend on the choice of
$s$, thanks to the product formula. An
equivalent definition is
\begin{equation}\label{Equ:degre dArakelov}\widehat{\deg}(\overline L)=-\sum_{\mathfrak
p\in\mathrm{Spm}\mathcal O_K}
\log\|s\|_{\mathfrak
p}-\sum_{\sigma:K\rightarrow\mathbb
C}\log\|s\|_\sigma,\end{equation} this time
$s$ could be an arbitrary element in $L_K$,
$\|\cdot\|_{\mathfrak p}$ is induced by the
$\mathcal O_K$-module structure on $L$. For a
Hermitian vector bundle $\overline E$ of
arbitrary rank, the {\it Arakelov degree} of
$\overline E$ is just
$\widehat{\deg}(\overline
E):=\widehat{\deg}(\Lambda^{\rang E}\overline
E)$, where the metrics of $\Lambda^{\rang
E}\overline E$ are exterior product metrics.
The Arakelov degree of the zero vector bundle
is zero. Furthermore, it is additive with
respect to short exact sequences.

When $\overline E$ is non-zero, the {\it
slope} of $\overline E$ is by definition the
quotient
\[\widehat{\mu}(\overline E):=
\frac{\widehat{\deg}(E)}{\delta_K\rang(E)},\] where
$\delta_K=[K:\mathbb Q]$. As in the case of vector
bundles on curves, the {\it maximal slope}
$\widehat{\mu}_{\max}(\overline E)$ and the {\it
minimal slope} $\widehat{\mu}_{\min}(\overline E)$ of
$\overline E$ are defined as the maximal value of
slopes of all non-zero Hermitian subbundles of
$\overline E$ and the minimal value of all non-zero
Hermitian quotient bundles of $\overline E$,
respectively. The existence of these extremal slopes
are due to Stuhler \cite{Stuhler76} and Grayson
\cite{Grayson84}. One has
$\widehat{\mu}_{\min}(\overline
E)=-\widehat{\mu}_{\max}(\overline E^\vee)$. A non-zero
Hermitian vector bundle is said to be {\it semistable}
if the equality $\widehat{\mu}_{\max}(\overline
E)=\widehat{\mu}(\overline E)$ holds. The results of
Stuhler and of Grayson mentioned above permit to
establish the analogue of Harder-Narasimhan filtration
in Arakelov geometry:
\begin{prop}
There exists a unique flag
\[0=E_0\subsetneq E_1\subsetneq\cdots\subsetneq E_n=E\]
of $E$ such that each subquotient
$\overline{E_i/E_{i-1}}$
($1\in\{1,\cdots,n\}$) is semistable, and
that, by writing
$\mu_i=\widehat{\mu}(\overline{E_i/E_{i-1}})$,
the inequalities of successive slopes
$\mu_1>\mu_2>\cdots>\mu_n$ hold.
\end{prop}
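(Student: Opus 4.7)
The plan is to establish the existence and uniqueness of the flag by induction on the rank of $E$, following the classical pattern for Harder--Narasimhan filtrations on curves but with Arakelov degree in place of the ordinary degree.

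For the existence, I would start by constructing $E_1$ as a \emph{maximal destabilizing subbundle}. By the Stuhler--Grayson finiteness results invoked just before the statement, $\widehat{\mu}_{\max}(\overline E)$ is finite and is attained by some non-zero saturated Hermitian subbundle. Among all saturated subbundles $F\subset E$ with $\widehat{\mu}(\overline F)=\widehat{\mu}_{\max}(\overline E)$, choose one, call it $E_1$, of maximal rank. The key technical step is to show that such an $E_1$ is unique and that $\overline{E_1}$ is semistable. Semistability is automatic: any subbundle of $\overline{E_1}$ with slope strictly larger than $\widehat{\mu}(\overline{E_1})$ would violate the definition of $\widehat{\mu}_{\max}(\overline E)$. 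For uniqueness I would use the sub-additivity relation
\[
\widehat{\deg}(\overline{F_1+F_2})+\widehat{\deg}(\overline{F_1\cap F_2})\geqslant\widehat{\deg}(\overline{F_1})+\widehat{\deg}(\overline{F_2}),
\]
valid for saturated subbundles $F_1,F_2$ (this comes from the exact sequence $0\to F_1\cap F_2\to F_1\oplus F_2\to F_1+F_2\to 0$ combined with the fact that the induced metrics on the quotient dominate the subspace metrics, so the Arakelov degree behaves monotonically with respect to metric comparison). Applied to two saturated subbundles both realizing $\widehat{\mu}_{\max}$, this forces $F_1+F_2$ to realize it as well, contradicting maximality of rank of $E_1$ unless $F_1,F_2\subset E_1$.

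Next, I would apply the induction hypothesis to the Hermitian quotient $\overline{E/E_1}$, which has strictly smaller rank. This yields a flag $0=F_0\subsetneq F_1\subsetneq\cdots\subsetneq F_{n-1}=E/E_1$ with the asserted properties. Pulling it back to $E$ and prepending $E_0=0$ gives the desired flag $E_0\subsetneq E_1\subsetneq\cdots\subsetneq E_n=E$ with semistable subquotients. The strict inequality $\mu_1>\mu_2$ then reduces to showing $\widehat{\mu}_{\max}(\overline{E/E_1})<\widehat{\mu}(\overline{E_1})$: if equality held, the preimage in $E$ of the maximal destabilizing subbundle of $\overline{E/E_1}$ would produce, via the short exact sequence and additivity of Arakelov degree, a saturated subbundle of $\overline E$ of slope $\widehat{\mu}_{\max}(\overline E)$ and rank strictly larger than $\rang(E_1)$, contradicting the maximality of $E_1$. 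The remaining inequalities $\mu_i>\mu_{i+1}$ follow by applying the same argument to the quotients arising in the induction.

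For uniqueness of the whole flag, suppose $0=E_0'\subsetneq\cdots\subsetneq E_m'=E$ is another such flag with slopes $\mu_1'>\cdots>\mu_m'$. The semistability of the first piece and the strict decrease of the slopes force $\widehat{\mu}(\overline{E_1'})=\widehat{\mu}_{\max}(\overline E)$ (any subbundle of $E_1'$ has slope $\leqslant\mu_1'$ by semistability, and subbundles not contained in $E_1'$ have slope bounded by a telescoping argument using the $\mu_i'$). By the uniqueness of the maximal destabilizing subbundle established above, $E_1'=E_1$, and uniqueness of the remaining flag follows by applying the induction hypothesis to $\overline{E/E_1}$.

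The main obstacle I anticipate is the uniqueness/maximality argument for $E_1$: one needs the correct inequality for Arakelov degrees of sums and intersections of saturated Hermitian subbundles, taking care that the induced metrics may differ from the restricted metrics only in a controlled way (they agree in the locally free case after saturation, which suffices here). Once this structural lemma is in place, the rest of the argument is a direct transcription of the function field proof.
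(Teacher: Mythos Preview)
The paper does not actually prove this proposition: it appears in the subsection ``Reminder on the slope method'' as a recall of a known result, with the existence of the maximal slope attributed to Stuhler \cite{Stuhler76} and Grayson \cite{Grayson84} and the slope formalism referenced to \cite{BostBour96,Bost2001,Chambert,BostICM}. Your argument is the standard Harder--Narasimhan construction transported to the Arakelov setting, and it is essentially what one finds in those references, so there is no meaningful divergence of approach to report.

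Two small points deserve tightening. First, in the step establishing $\mu_1>\mu_2$, the contradiction should be derived from the weaker hypothesis $\widehat{\mu}_{\max}(\overline{E/E_1})\geqslant\widehat{\mu}(\overline{E_1})$, not just from equality; the computation you indicate works verbatim under this weaker assumption. Second, in the uniqueness argument, showing $\widehat{\mu}(\overline{E_1'})=\widehat{\mu}_{\max}(\overline E)$ is not by itself enough to invoke uniqueness of the maximal destabilizing subbundle: you also need $E_1'$ to have maximal rank among subbundles of that slope. This is easy to supply: if $E_1'\subsetneq E_1$, then $E_1/E_1'$ is a nonzero subbundle of $\overline{E/E_1'}$ of slope $\mu_1'>\mu_2'$, contradicting $\mu_2'=\widehat{\mu}_{\max}(\overline{E/E_1'})$. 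Finally, in your degree inequality for $F_1+F_2$, remember to pass to the saturation of $F_1+F_2$ in $E$; saturation only increases the Arakelov degree and preserves the rank, so the inequality and the subsequent slope argument survive.
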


Let $\overline E$ and $\overline F$ be two Hermitian
vector bundles. The {\it height} of a homomorphism
$\phi:E_K\rightarrow F_K$ is defined as the sum of the
logarithms of norms of all local homomorphism induced
from $\phi$ by extending scalars, divided by
$\delta_K$, that is,
\[h(\phi):=\frac{1}{\delta_K}\bigg(
\sum_{\mathfrak p}\log\|\phi_{\mathfrak
p}\|_{\mathfrak
p}+\sum_{\sigma:K\rightarrow\mathbb
C}\log\|\phi\|_\sigma\bigg).\] It is negative
or zero notably when $\phi$ is {\it
effective}, i.e., $\phi$ gives rise to an
$\mathcal O_K$-linear homomorphism, and
$\|\phi_\sigma\|\leqslant 1$ for any
embedding $\sigma:K\rightarrow\mathbb C$.

The following slope inequality compares the
slopes of two Hermitian vector bundles, given
an injective homomorphism between them.

\begin{prop}
Let $\overline E$ and $\overline F$ be two
non-zero Hermitian vector bundles and
$\phi:E_K\rightarrow F_K$ be an injective
$K$-linear homomorphism. Then then following
inequality holds:
\[\widehat{\mu}_{\max}(\overline E)\leqslant\widehat{\mu}_{\max}(\overline F)+h(\phi),\]
where $h(\phi)$ is the height of $\phi$.
\end{prop}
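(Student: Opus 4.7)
The plan is to reduce the general slope inequality to the case where $\phi$ is an isomorphism between two Hermitian bundles of the same rank, and then to the case of Hermitian line bundles, where the inequality follows directly from the product-formula expression \eqref{Equ:degre dArakelov} for the Arakelov degree.

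First I would choose a non-zero Hermitian subbundle $\overline{E'}\subset\overline E$ realizing $\widehat\mu(\overline{E'})=\widehat\mu_{\max}(\overline E)$ (the existence of such an $\overline{E'}$ is guaranteed by the theorem of Stuhler--Grayson recalled above). The restriction $\phi':=\phi|_{E'_K}\colon E'_K\to F_K$ remains injective; since restricting to a subspace cannot increase the operator norm at any place of $K$, one has $h(\phi')\leqslant h(\phi)$. Next I would saturate the image inside $F$: set $G:=\phi'(E'_K)\cap F$, equipped with the metrics induced from $\overline F$. Then $\overline G$ is a Hermitian subbundle of $\overline F$, so $\widehat\mu(\overline G)\leqslant\widehat\mu_{\max}(\overline F)$; and $\phi'$ factors through an isomorphism $\widetilde\phi\colon E'_K\xrightarrow{\sim}G_K$ whose norm at every place equals that of $\phi'$, so in particular $h(\widetilde\phi)=h(\phi')$.

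Now let $r=\rang E'=\rang G$ and pass to top exterior powers: $\det\widetilde\phi\colon\det E'_K\to\det G_K$ is an isomorphism of one-dimensional $K$-vector spaces. Picking a non-zero $s\in\det E'_K$ and applying \eqref{Equ:degre dArakelov} to $s$ and to $\det\widetilde\phi(s)$, together with the bound $\|\det\widetilde\phi(s)\|_v\leqslant\|\det\widetilde\phi\|_v\cdot\|s\|_v$ at every place $v$ of $K$, yields the line-bundle slope inequality $\widehat{\deg}(\det\overline{E'})\leqslant\widehat{\deg}(\det\overline G)+\delta_K\,h(\det\widetilde\phi)$. Dividing by $\delta_K r$ converts this into $\widehat\mu(\overline{E'})\leqslant\widehat\mu(\overline G)+h(\det\widetilde\phi)/r$.

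The one nontrivial estimate --- the main obstacle --- is $\|\det\widetilde\phi\|_v\leqslant\|\widetilde\phi\|_v^r$ at every place $v$: at archimedean places this is Hadamard's inequality for the operator norm of the $r$-th exterior power of a linear map between Hermitian spaces, and at non-archimedean places it holds because a norm-one operator on a free $\mathcal O_{K,\mathfrak p}$-module of rank $r$ induces a norm-one operator on $\Lambda^r$. Taking logarithms and summing over all places gives $h(\det\widetilde\phi)\leqslant r\,h(\widetilde\phi)$. Chaining this with $h(\widetilde\phi)=h(\phi')\leqslant h(\phi)$ and with $\widehat\mu(\overline G)\leqslant\widehat\mu_{\max}(\overline F)$ yields $\widehat\mu_{\max}(\overline E)=\widehat\mu(\overline{E'})\leqslant\widehat\mu_{\max}(\overline F)+h(\phi)$, which is the asserted inequality.
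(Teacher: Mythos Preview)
Your proof is correct and is essentially the standard argument for Bost's slope inequality: destabilize $\overline E$, saturate the image in $\overline F$, and reduce to the rank-one case via determinants and the Hadamard-type bound $\|\det\widetilde\phi\|_v\leqslant\|\widetilde\phi\|_v^r$. The paper itself does not supply a proof of this proposition; it is stated in the ``Reminder on the slope method'' subsection as a known result, with references to \cite{BostBour96,Bost2001,Chambert,BostICM}, so there is nothing to compare against beyond noting that your argument is the one found in those references.

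One cosmetic remark: your notation $G:=\phi'(E'_K)\cap F$ for the saturation is unambiguous once one reads the intersection inside $F_K$, but it may be clearer to say ``let $G$ be the saturation of $\phi'(E')$ in $F$'' to match the paper's terminology.
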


By passing to dual Hermitian vector bundles,
we obtain the following corollary.
\begin{coro}\label{Cor:inegalite des pentes mumin}
Let $\overline E$ and $\overline F$ be two
non-zero Hermitian vector bundles and
$\psi:E_K\rightarrow F_K$ be a surjective
homomorphism. Then
\[\widehat{\mu}_{\min}(\overline F)\geqslant
\widehat{\mu}_{\min}(\overline E)-h(\psi).\]
\end{coro}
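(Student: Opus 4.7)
The approach is dictated by the hint preceding the statement: dualize and invoke the slope inequality just proved. Concretely, I would form the $K$-linear dual $\psi^{\vee}\colon F_K^{\vee}\to E_K^{\vee}$, which is injective because $\psi$ is surjective. Here $\overline{F}^{\vee}$ and $\overline{E}^{\vee}$ are endowed with the dual metrized structures (dual $\mathcal{O}_K$-lattices at the finite places, dual Hermitian norms at the archimedean places), so that they are again Hermitian vector bundles on $\Spec\mathcal{O}_K$.

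Next I would apply the preceding slope inequality to the injective homomorphism $\psi^{\vee}$:
\[
\widehat{\mu}_{\max}(\overline{F}^{\vee})\leqslant \widehat{\mu}_{\max}(\overline{E}^{\vee})+h(\psi^{\vee}).
\]
Using the identity $\widehat{\mu}_{\min}(\overline{G})=-\widehat{\mu}_{\max}(\overline{G}^{\vee})$ recalled in Section \ref{SubSec:slope method} (applied to $\overline{G}=\overline{E}$ and $\overline{G}=\overline{F}$), this rearranges to
\[
\widehat{\mu}_{\min}(\overline{F})\geqslant \widehat{\mu}_{\min}(\overline{E})-h(\psi^{\vee}).
\]
It then remains to identify $h(\psi^{\vee})$ with $h(\psi)$.

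The only real point to check is therefore the equality $h(\psi^{\vee})=h(\psi)$, which will be the main (though standard) step. Place by place, the operator norm of $\psi^{\vee}$ with respect to the dual norms coincides with the operator norm of $\psi$: at an archimedean place this is the equality $\|\psi_\sigma^{*}\|=\|\psi_\sigma\|$ for the Hermitian adjoint, and at a finite place $\mathfrak{p}$ it is the analogous statement for lattice (sup) norms and their $\mathcal{O}_{K_{\mathfrak{p}}}$-duals. Summing the logarithms of these local norms and dividing by $\delta_K$ gives $h(\psi^{\vee})=h(\psi)$, and combining this equality with the previous display yields exactly the claimed inequality.
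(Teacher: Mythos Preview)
Your proposal is correct and follows exactly the route indicated by the paper, which simply says ``By passing to dual Hermitian vector bundles, we obtain the following corollary.'' Your write-up in fact supplies more detail than the paper does, including the place-by-place verification that $h(\psi^{\vee})=h(\psi)$.
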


\subsection{Minimum filtration and slope filtration}

Let $\overline E$ be a metrized vector bundle
on $\Spec\mathcal O_K$. Let $r$ be the rank
of $E$ and $i\in\{1,\cdots,r\}$. Recall the
$i^{\text{th}}$ (logarithmic) {\it minimum}
of $\overline E$ is defined as
\[e_i(\overline E):=-\log\inf\{a>0\,|\,\rang(\mathrm{Vect}_K\{\mathbb B(\overline E,a)\})\geqslant i\},\]
where $\mathbb B(\overline E,a)=\{s\in
E\,|\,\forall\sigma:K\rightarrow\mathbb
C,\,\|s\|_\sigma\leqslant a\}$. Denote by
$e_{\max}(\overline E)=e_1(\overline E)$ and
$e_{\min}(\overline E)=e_r(\overline E)$.
Define an $\mathbb R$-filtration $\mathcal
F^M$ on $E_K$ as
\[\mathcal F^{M}_tE_K:=\mathrm{Vect}_K\{\mathbb B(
\overline E,e^{-t})\},\] called the {\it minimum
filtration} of $\overline E$.  Note that
$\lambda_{\max}(E_K,\mathcal F^M)=e_{\max}(\overline
E)$ and $\lambda_{\min}(E_K,\mathcal
F^M)=e_{\min}(\overline E)$.

Assume that $\overline E$ is a Hermitian
vector bundle, we define another $\mathbb
R$-filtration $\mathcal F^S$ on $E_K$ such
that \begin{equation}\label{Equ:HN
filtration}\mathcal
F^S_tE_K:=\sum_{\begin{subarray}{c}
F\subset E\\
\widehat{\mu}_{\min}(\overline F)\geqslant t
\end{subarray}}F_K,\end{equation}
where $\widehat{\mu}_{\min}(0)=+\infty$ by
convention. The filtration $\mathcal F^S$ is
called the {\it slope filtration} of
$\overline E$. The slope filtration is just a
reformulation of the Harder-Narasimhan
filtration of $\overline E$ in considering
the successive slopes at the same time. In
particular, if $\overline E_t$ is the
saturated Hermitian vector subbundle of $E$
such that $E_{t,K}=\mathcal F^S_tE_K$, then
one has
\begin{equation}\label{Equ:mu min de Et minore par t}
\widehat{\mu}_{\min}(\overline E_t)\geqslant
t.\end{equation} Moreover, one has
\[\lambda_{\max}(E_K,\mathcal F^S)=\widehat{\mu}_{\max}(\overline
E), \qquad\lambda_{\min}(E_K,\mathcal
F^S)=\widehat{\mu}_{\min}(\overline E).\]
 See \cite[\S
2.2]{Chen08} for details.

\begin{rema}
\'E. Gaudron has generalized the notions of
maximal slope, minimal slope and
Harder-Narasimhan filtration for metrized
vector bundles, cf. \cite[\S 5.2]{Gaudron07}.
However, it is not clear if the $\mathbb
R$-indexed version of his definition of
Harder-Narasimhan filtration (by using
maximal slopes) coincides with \eqref{Equ:HN
filtration} when the metrized vector bundle
is not Hermitian.
\end{rema}

The slope filtration has the following
functorial property, which is an application
of the slope inequality. For proof, see
\cite[Proposition 2.2.4]{Chen08}.
\begin{prop}
Let $\overline E$ and $\overline F$ be two
Hermitian vector bundles on $\Spec\mathcal
O_K$, $\phi:E_K\rightarrow F_K$ be a
homomorphism. Then for any real number $t$,
one has
\begin{equation}\label{Equ:fonctorialte}\phi(\mathcal
F^S_tE_K)\subset \mathcal
F^S_{t-h(\phi)}F_K.\end{equation}
\end{prop}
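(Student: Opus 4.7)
The plan is to prove the inclusion by unpacking the definition of $\mathcal F^S$: it suffices to show that for every Hermitian subbundle $\overline{E'}\subset\overline E$ with $\widehat\mu_{\min}(\overline{E'})\geqslant t$, the image $\phi(E'_K)$ lies in $\mathcal F^S_{t-h(\phi)}F_K$, since $\mathcal F^S_tE_K$ is defined as the sum of such subspaces $E'_K$.

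So fix such an $\overline{E'}$. If $\phi|_{E'_K}=0$ the claim is trivial, so assume otherwise. Let $G\subset F$ be the saturated sub-$\mathcal O_K$-module with generic fibre $G_K=\phi(E'_K)$, and equip it with the restriction of the Hermitian metrics of $\overline F$ at each archimedean place; this yields a Hermitian vector bundle $\overline G$. Then $\phi$ induces a \emph{surjective} $K$-linear map $\psi:E'_K\twoheadrightarrow G_K$. By Corollary \ref{Cor:inegalite des pentes mumin} applied to $\psi$, one obtains
\[
\widehat\mu_{\min}(\overline G)\geqslant\widehat\mu_{\min}(\overline{E'})-h(\psi)\geqslant t-h(\psi).
\]

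It remains to compare $h(\psi)$ with $h(\phi)$. At each place $v$ of $K$ (archimedean or not), the metric on $E'$ is the restriction of the metric on $E$ and that of $G$ is the restriction of the metric on $F$. Hence for any $x\in E'_{K_v}$ one has $\|\psi(x)\|_{G,v}=\|\phi(x)\|_{F,v}\leqslant\|\phi\|_v\,\|x\|_{E,v}=\|\phi\|_v\,\|x\|_{E',v}$, so $\|\psi\|_v\leqslant\|\phi\|_v$. Summing $\log\|\cdot\|_v$ over all places and dividing by $\delta_K$ gives $h(\psi)\leqslant h(\phi)$. Substituting back, $\widehat\mu_{\min}(\overline G)\geqslant t-h(\phi)$, hence by the defining formula \eqref{Equ:HN filtration} we conclude $G_K\subset\mathcal F^S_{t-h(\phi)}F_K$, i.e.\ $\phi(E'_K)\subset\mathcal F^S_{t-h(\phi)}F_K$, which is what we needed.

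There is no real obstacle here: the whole content is packaged in Corollary \ref{Cor:inegalite des pentes mumin}. The only point that requires a line of verification is the place-by-place comparison $\|\psi\|_v\leqslant\|\phi\|_v$, which is immediate since $\overline{E'}$ and $\overline G$ carry the restricted metrics; this makes $h(\psi)\leqslant h(\phi)$ and lets the slope inequality give exactly the shift by $h(\phi)$ stated in the proposition.
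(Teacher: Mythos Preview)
Your proof is correct and follows precisely the route the paper indicates (``an application of the slope inequality'', with the actual argument deferred to \cite[Proposition 2.2.4]{Chen08}): you reduce to a single subbundle via the definition \eqref{Equ:HN filtration}, push forward to a saturated $\overline G\subset\overline F$, and apply Corollary \ref{Cor:inegalite des pentes mumin} together with the placewise comparison $\|\psi\|_v\leqslant\|\phi\|_v$. One small remark: your claim that the non-archimedean metric on $E'$ is literally the restriction from $E$ requires $E'$ to be saturated in $E$; this is harmless since one may always replace $E'$ by its saturation (or simply take $E'=E_t$ as in \eqref{Equ:mu min de Et minore par t}) without decreasing $\widehat\mu_{\min}$ or changing $E'_K$, and in any case the inequality $\|x\|_{E',\mathfrak p}\geqslant\|x\|_{E,\mathfrak p}$ already goes the right way.
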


Let $E$ be a non-zero projective $\mathcal
O_K$-module of finite rank. Let
$g=(\|\cdot\|_\sigma)_{\sigma:K\rightarrow\mathbb
C}$ and
$g'=(\|\cdot\|'_{\sigma})_{\sigma:K\rightarrow\mathbb
C }$ be two families of norms on $E$. We
assume that all metrics $\|\cdot\|'_\sigma$
are Hermitian. Define
\[D(E,g,g'):=\max_{\sigma:K\rightarrow\mathbb C}
\sup_{0\neq s\in E_{\sigma,\mathbb C
}}\big|\log\|s\|_\sigma-\log\|s\|_\sigma'\big|
\]
Denote by $\mathcal F^M$ the minima
filtration of $(E,g)$, and by $\mathcal F^S$
the slope filtration of $(E,g')$.
\begin{prop}\label{Pro:majoration de Fm par Fs}
One has, for any $t\in\mathbb R$,
\[\mathcal F^M_tE_K\subset \mathcal F^S_{t-\alpha}E_K,\]
where $\alpha=\log\sqrt{r}+D(E,g,g')$.
\end{prop}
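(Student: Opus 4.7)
The plan is to exhibit a saturated Hermitian subbundle of $(E,g')$ whose $K$-fiber coincides with $V:=\mathcal F^M_tE_K$ and whose minimum slope is at least $t-\alpha$; the desired inclusion $\mathcal F^M_tE_K\subset\mathcal F^S_{t-\alpha}E_K$ then follows at once from the defining sum \eqref{Equ:HN filtration} for the slope filtration. Concretely, let $\widetilde V:=E\cap V$ be the saturation of $V$ inside $E$; this is a saturated sub-$\mathcal O_K$-module of rank $k:=\dim_KV$ with $\widetilde V_K=V$, and I endow it with the restriction of the Hermitian metrics $g'$. It will suffice to prove that $\widehat{\mu}_{\min}(\widetilde V,g'|_{\widetilde V})\geqslant t-\alpha$.

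The first step is to control the last successive minimum of $\widetilde V$. By definition of $\mathcal F^M$, there exist $s_1,\dots,s_k\in E$ with $\|s_i\|_\sigma\leqslant e^{-t}$ for every index $i$ and every embedding $\sigma$, forming a $K$-basis of $V$. These $s_i$ automatically lie in $\widetilde V$ and certify $e_k(\widetilde V,g|_{\widetilde V})\geqslant t$. Using the comparison $\|s\|'_\sigma\leqslant e^{D}\|s\|_\sigma$ with $D:=D(E,g,g')$, the same basis yields $\|s_i\|'_\sigma\leqslant e^{D-t}$, and therefore $e_k(\widetilde V,g'|_{\widetilde V})\geqslant t-D$.

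The second and main step upgrades this minima bound to an estimate on $\widehat{\mu}_{\min}$. Here I invoke the Hermitian transference inequality of Minkowski--Banaszczyk--Bost--K\"unnemann type alluded to in the introduction: for every nonzero Hermitian vector bundle $\overline F$ on $\Spec\mathcal O_K$,
\[\widehat{\mu}_{\min}(\overline F)\geqslant e_{\min}(\overline F)-\log\sqrt{\rang F}.\]
Applied to $(\widetilde V,g'|_{\widetilde V})$ with $k=\rang\widetilde V\leqslant r$, this yields $\widehat{\mu}_{\min}(\widetilde V,g'|_{\widetilde V})\geqslant t-D-\log\sqrt{k}\geqslant t-D-\log\sqrt{r}=t-\alpha$. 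Hence $\widetilde V$ enters the sum defining $\mathcal F^S_{t-\alpha}E_K$, giving $\mathcal F^M_tE_K=V=\widetilde V_K\subset\mathcal F^S_{t-\alpha}E_K$, as required.

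The only nontrivial ingredient is the transference inequality in the last step: this is precisely where the Hermitian hypothesis on $g'$ is essential, and it is imported directly from \cite{Banaszczyk95,Bost_Kunnemann}. The remainder of the argument amounts to identifying the correct saturated subbundle and pushing the chosen generators of $V$ from $g$-bounded to $g'$-bounded at the cost of the constant $D$.
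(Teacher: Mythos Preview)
Your argument is correct and follows the same skeleton as the paper's: saturate $\mathcal F^M_tE_K$ inside $E$, pick a $K$-basis of small $g$-norm, pass to $g'$ at cost $D$, and bound $\widehat{\mu}_{\min}$ of the resulting Hermitian subbundle from below. The one substantive difference is in how that last bound is justified. The paper does it explicitly via the slope inequality (Corollary~\ref{Cor:inegalite des pentes mumin}) applied to the surjection $\phi:\overline{\mathcal O}_K^{\oplus a}\to\overline F$ sending the standard basis to the $s_i$, computing $h(\phi)\leqslant\log\sqrt{a}-t+D$. You instead invoke a packaged transference inequality $\widehat{\mu}_{\min}(\overline F)\geqslant e_{\min}(\overline F)-\log\sqrt{\rang F}$. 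That inequality is true, but its proof is precisely the paper's slope-inequality computation; it is not what \cite{Banaszczyk95,Bost_Kunnemann} supply here. In this paper those references are used for the \emph{reverse} comparison (Lemma~\ref{Lem:emin minorer par mumin} and Proposition~\ref{Pro:inversion comparison}), bounding $e_{\min}$ below in terms of $\widehat{\mu}_{\min}$, which is the harder direction. So your citation is misplaced: for the present proposition you should appeal to Corollary~\ref{Cor:inegalite des pentes mumin} rather than to the transference literature.
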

\begin{proof} Without loss of generality, we
assume that $\mathcal F^M_tE_K\neq 0$. Let
$\overline F$ be the saturation of $\mathcal
F^M_tE_K$ in $E$, equipped with metrics
induced from $g'$. Thus $\overline F$ becomes
a Hermitian vector subbundle of $(E,g')$. Let
$a$ be the rank of $F$. As $F$ is generated
by elements in $\mathbb B((E,g),e^{-t})$,
there exists non-zero elements
$s_1,\cdots,s_a$ in $F$ which form a basis of
$F_K$ and such that $\|s_i\|_\sigma\leqslant
e^{-t}$ for any $\sigma:K\rightarrow\mathbb C
$. One has $\|s_i\|_{\sigma}'\leqslant
e^{-t+D(E,g,g')}$. Let
$\phi:\overline{\mathcal O}_K^{\oplus
a}\rightarrow\overline F$ be the homomorphism
defined by $(s_1,\cdots,s_a)$, where the
Hermitian metrics on $\overline{\mathcal
O}_K$ are trivial. One has
\[h(\phi)\leqslant \log\sqrt{a}-t+D(E,g,g')\leqslant
\log\sqrt{r}-t+D(E,g,g'),\] where $r=\rang
(E) $. By Corollary \ref{Cor:inegalite des
pentes mumin}, the inequality
$\widehat{\mu}_{\min}(\overline F)\geqslant
t-\log\sqrt{r}-D(E,g,g')$ holds. Therefore,
$F_K\subset\mathcal F^S_{t-\alpha}E_K$.
\end{proof}

In order to establish the inverse comparison,
we need some notation. Let $f:\Spec\mathcal
O_K\rightarrow\Spec\mathbb Z$ be the
canonical morphism. For any Hermitian vector
bundle $\overline F$ on $\Spec\mathcal O_K$,
denote by $f_*\overline F$ the Hermitian
vector bundle on $\Spec\mathbb Z$ whose
underlying $\mathbb Z$ module is $F$ and such
that, for any
$s=(s_\sigma)_{\sigma:K\rightarrow\mathbb C
}\in F\otimes_{\mathbb Z}\mathbb
C=\bigoplus_{\sigma:K\rightarrow\mathbb C}
F\otimes_{\mathcal O_K,\sigma}\mathbb C$, one
has
\[\|s\|^2=\sum_{\sigma:K\rightarrow\mathbb C}
\|s_\sigma\|_{\sigma}^2.\]
\begin{prop}
The inequality $e_{\min}(\overline F)\geqslant
e_{\min}(f_*\overline F)$ holds.
\end{prop}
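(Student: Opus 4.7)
The plan is to unwind both sides to their defining infima and compare them directly. Writing $r = \rang_{\mathcal{O}_K} F$, so that $f_*F$ has $\mathbb{Z}$-rank $r\delta_K$, the inequality to establish is
\[
\inf\{a>0 : \rang_K \mathrm{Vect}_K\{\mathbb B(\overline F,a)\} \geqslant r\} \;\leqslant\; \inf\{b>0 : \rang_{\mathbb Q} \mathrm{Vect}_{\mathbb Q}\{\mathbb B(f_*\overline F,b)\} \geqslant r\delta_K\}.
\]
The key ingredient is the elementary pointwise comparison of norms: for every $s \in F$,
\[
\max_{\sigma:K\rightarrow\mathbb C}\|s_\sigma\|_\sigma^2 \;\leqslant\; \sum_{\sigma:K\rightarrow\mathbb C}\|s_\sigma\|_\sigma^2 \;=\; \|s\|^2,
\]
which gives the set inclusion $\mathbb B(f_*\overline F, a) \subset \mathbb B(\overline F, a)$ for all $a > 0$.

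First I would fix any $b$ strictly greater than the right-hand infimum. By definition, there exist elements $s_1,\ldots,s_{r\delta_K} \in F$, linearly independent over $\mathbb Q$, such that $\|s_i\| \leqslant b$ for each $i$. By the norm comparison above, these same elements satisfy $\|s_i\|_\sigma \leqslant b$ for every embedding $\sigma$, hence all lie in $\mathbb B(\overline F,b)$.

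Next I would promote $\mathbb Q$-linear independence to the existence of $r$ elements that are $K$-linearly independent. Since $\dim_{\mathbb Q} F_K = r\delta_K$, the $s_i$ form a $\mathbb Q$-basis of $F_K = F_{\mathbb Q}$; in particular they $K$-span $F_K$, so a suitable subfamily of size $r$ is $K$-linearly independent. Consequently $\rang_K \mathrm{Vect}_K\{\mathbb B(\overline F,b)\} \geqslant r$, which means $b$ belongs to the left-hand set. Passing to the infimum on the right yields the desired inequality, and hence $e_{\min}(\overline F) \geqslant e_{\min}(f_*\overline F)$.

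There is no real obstacle: the statement amounts to packaging the trivial bound $\|s_\sigma\|_\sigma \leqslant \|s\|$ together with the fact that a $\mathbb Q$-linearly independent family of the right cardinality in a $K$-vector space always contains a $K$-basis. The only point deserving mild care is to make the argument with infima (rather than with realized minima) so as not to assume that extremal configurations are attained.
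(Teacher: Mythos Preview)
Your proof is correct and follows essentially the same approach as the paper's own proof: both use the pointwise norm inequality $\|s\|_\sigma \leqslant \|s\|$ to obtain the inclusion $\mathbb B(f_*\overline F,a)\subset\mathbb B(\overline F,a)$, and then observe that a set $\mathbb Q$-spanning $F_{\mathbb Q}$ automatically $K$-spans $F_K$. The paper states this last point in one line, while you spell out the extraction of a $K$-basis and the infimum argument more carefully, but there is no substantive difference.
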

\begin{proof}
Let $s$ be an arbitrary element in $F$. By definition,
one has $\|s\|\geqslant \|s\|_{\sigma}$ for any
$\sigma:K\rightarrow\mathbb C$. Thus, for any $u>0$,
one has $\mathbb B(\overline F,u)\supset\mathbb
B(f_*\overline F,u)$. Furthermore, if $\mathbb
B(f_*\overline F,u)$ generates $F_{\mathbb Q}$ as a
vector space over $\mathbb Q$, it also generates
$F_{K}$ as a vector space over $K$. Therefore,
$e_{\min}(\overline F)\geqslant e_{\min}(f_*F)$.
\end{proof}

Recall some results in \cite{Banaszczyk95}
and \cite{Bost_Kunnemann}.

\begin{prop}
\begin{enumerate}[1)]
\item Let $\overline G$ be a Hermitian vector
bundle on $\Spec\mathbb Z$. Then
\begin{equation}\label{Banaszczyk}e_{\min}(\overline G)+e_{\max}(\overline
G^\vee)
\geqslant-\log(3\rang(G)/2).\end{equation}
\item Let $\overline F$ be a Hermitian vector bundle
on $\Spec\mathcal O_K$. Then
\begin{equation}\label{Bost-Kunnemann}
\widehat{\mu}_{\max}(\overline F)-\frac
12\log(\delta_K\cdot\rang(F))-\frac{\log|\Delta_K|}{2\delta_K}
\leqslant e_{\max}(\overline
F)\leqslant\widehat{\mu}_{\max}(\overline
F)-\frac12\log(\delta_K),\end{equation} where
$\Delta_K$ is the discriminant of $K$.
\end{enumerate}
\end{prop}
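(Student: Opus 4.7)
Both inequalities are transference-type estimates comparing successive minima to Arakelov slopes, and in each case the strategy is to reduce to classical lattice geometry and invoke the cited works of Banaszczyk and Bost--K\"unnemann.

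For part (1), I would write $\lambda_n(G)=\exp(-e_{\min}(\overline G))$ and $\lambda_1(G^\vee)=\exp(-e_{\max}(\overline G^\vee))$ for the last and first successive minima of $G$ and its dual. The inequality \eqref{Banaszczyk} is then exactly equivalent to $\lambda_n(G)\cdot\lambda_1(G^\vee)\leqslant 3\rang(G)/2$, which is a weak form of Banaszczyk's transference inequality $\lambda_i(G)\cdot\lambda_{n+1-i}(G^\vee)\leqslant n$ proved in \cite{Banaszczyk95}. I would simply quote it; there is no substantive logical work left on our side.

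For part (2), the plan is to transfer the comparison from $\Spec\mathcal O_K$ to $\Spec\mathbb Z$ via the direct image functor $f_*$ associated with $f:\Spec\mathcal O_K\rightarrow\Spec\mathbb Z$. The two structural identities I would use throughout are
\[\widehat{\deg}(f_*\overline F)=\widehat{\deg}(\overline F)-\frac{\rang(F)}{2}\log|\Delta_K|\]
together with $\|s\|_{f_*\overline F}^2=\sum_\sigma\|s\|_\sigma^2$, which yields the elementary comparison $\max_\sigma\|s\|_\sigma\leqslant\|s\|_{f_*\overline F}\leqslant\sqrt{\delta_K}\,\max_\sigma\|s\|_\sigma$. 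For the upper bound on $e_{\max}(\overline F)$, I would pick $s\in F$ attaining $e_{\max}(\overline F)$, pass to the saturated rank-one subbundle $\overline L\subset\overline F$ generated by $s$, and apply the slope inequality of \S\ref{SubSec:slope method} to the inclusion $\overline{\mathcal O_K}\hookrightarrow\overline L$ defined by $s$; the factor $\tfrac 12\log\delta_K$ would emerge precisely from the $L^\infty$-vs-$L^2$ norm comparison above. For the lower bound, I would choose a Hermitian subbundle $\overline F_1\subset\overline F$ realising $\widehat{\mu}_{\max}(\overline F)$, apply Minkowski's first theorem to the Euclidean lattice underlying $f_*\overline F_1$, and translate the resulting short vector back to $\overline F$; the rank factor $\delta_K\rang(F)$ is Minkowski's constant for this lattice, and the discriminant correction $|\Delta_K|^{1/(2\delta_K)}$ enters through the degree identity displayed above.

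The main obstacle is not the logic but Banaszczyk's transference inequality itself, whose proof rests on a delicate Gaussian-measure argument applied to $G$ and $G^\vee$ simultaneously; granted that, the remaining work in (2) is the careful bookkeeping of normalizations (the $\delta_K$ factor in $\widehat\mu=\widehat\deg/(\delta_K\rang)$, the discriminant, the passage to saturations) between $\overline F$ and $f_*\overline F$, which is routine once one has chosen conventions.
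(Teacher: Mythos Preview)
Your proposal matches the paper's approach: the paper's entire proof is the single sentence ``See \cite[Theorem 3.1 (iii)]{Banaszczyk95} for 1) and \cite[(3.23), (3.24)]{Bost_Kunnemann} for 2),'' so both you and the paper simply defer to these references. Your additional commentary on the shape of the underlying arguments (the transference reformulation for (1), and the Minkowski/direct-image bookkeeping for (2)) is reasonable context but goes beyond what the paper itself supplies.
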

\begin{proof}
See \cite[Theorem 3.1 (iii)]{Banaszczyk95} For 1) and
\cite[(3.23), (3.24)]{Bost_Kunnemann} for 2).
\end{proof}

Denote by $\omega_{\mathcal
O_K}:=\Hom_{\mathbb Z}(\mathcal O_K,\mathbb Z
)$ the canonical module of the number field
$K$. Note that the trace map
$\mathrm{tr}_{K/\mathbb Q}$ is a non-zero
element in $\omega_{\mathcal O_K}$. We equip
$\omega_{\mathcal O_K}$ with the norms such
that $\|\mathrm{tr}_{K/\mathbb Q}\|_\sigma=1$
for any $\sigma:K\rightarrow\mathbb Q$. Thus
we obtain a Hermitian line bundle
$\overline{\omega}_{\mathcal O_K}$. The
Arakelov degree of
$\overline{\omega}_{\mathcal O_K}$ is
$\log|\Delta_K|$, where $\Delta_K$ is the
discriminant of $K$. By \cite[Proposition
3.2.2]{Bost_Kunnemann}, for any Hermitian
vector bundle $\overline F$ over
$\Spec\mathcal O_K$, one has a natural
isomorphism
\begin{equation}\label{Equ:image direct}f_*(\overline
F^\vee\otimes\overline{\omega}_{ \mathcal
O_K})\cong (f_*\overline
F)^\vee.\end{equation}

The following lemma compares the logarithmic last
minimum and the minimal slope.
\begin{lemm}\label{Lem:emin minorer par mumin}
Let $\overline F$ be a Hermitian vector
bundle on $\Spec\mathcal O_K$. One has
\[e_{\min}(\overline F)\geqslant\widehat{\mu}_{\min}(\overline
F)-\log|\Delta_K|-\frac{1}{2}\log
\delta_K-\log(3/2)-\log(\rang(F)).\]
\end{lemm}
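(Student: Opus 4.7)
The plan is to pass from $\Spec\mathcal O_K$ to $\Spec\mathbb Z$ via the push-forward $f_*$, apply Banaszczyk's duality estimate \eqref{Banaszczyk}, then transfer back using the canonical isomorphism \eqref{Equ:image direct} and the upper half of Bost--K\"unnemann \eqref{Bost-Kunnemann}.

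First I would combine the preceding proposition $e_{\min}(\overline F)\geqslant e_{\min}(f_*\overline F)$ with Banaszczyk's inequality applied to the Hermitian $\mathbb Z$-bundle $G=f_*\overline F$ (of rank $\delta_K\rang(F)$) to obtain
\[
e_{\min}(\overline F)\geqslant -e_{\max}\bigl((f_*\overline F)^\vee\bigr)-\log(3\delta_K\rang(F)/2).
\]
The isomorphism \eqref{Equ:image direct} identifies $(f_*\overline F)^\vee$ with $f_*(\overline F^\vee\otimes\overline{\omega}_{\mathcal O_K})$, so everything reduces to an upper bound for $e_{\max}$ of this push-forward over $\mathbb Z$.

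For that step I would record the analogue of the preceding proposition for the maximum rather than the minimum: the pointwise inequality $\|s\|_{f_*\overline H}^{\,2}=\sum_\sigma\|s\|_\sigma^{\,2}\geqslant\|s\|_\sigma^{\,2}$ gives the inclusion $\mathbb B(f_*\overline H,u)\subset\mathbb B(\overline H,u)$, which implies $e_{\max}(f_*\overline H)\leqslant e_{\max}(\overline H)$ for any Hermitian bundle $\overline H$ over $\mathcal O_K$. Applying this to $\overline H=\overline F^\vee\otimes\overline{\omega}_{\mathcal O_K}$ and invoking the upper half of \eqref{Bost-Kunnemann} reduces matters to controlling $\widehat\mu_{\max}(\overline F^\vee\otimes\overline{\omega}_{\mathcal O_K})$.

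Finally I would compute the slope shift: since $\widehat\mu(\overline{\omega}_{\mathcal O_K})=\log|\Delta_K|/\delta_K$ and dualization swaps $\widehat\mu_{\max}$ with $-\widehat\mu_{\min}$, one gets
\[
\widehat\mu_{\max}(\overline F^\vee\otimes\overline{\omega}_{\mathcal O_K})=-\widehat\mu_{\min}(\overline F)+\frac{\log|\Delta_K|}{\delta_K}.
\]
Concatenating the four inequalities produces the slightly sharper bound $e_{\min}(\overline F)\geqslant\widehat\mu_{\min}(\overline F)-\log|\Delta_K|/\delta_K-\tfrac12\log\delta_K-\log(3/2)-\log\rang(F)$; the stated lemma then follows at once from $\delta_K\geqslant 1$. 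The only delicate point is the sign bookkeeping for the slope shift under dualization and tensorization with the canonical bundle; the rest of the argument is a direct assembly of the four ingredients \eqref{Banaszczyk}, \eqref{Bost-Kunnemann}, \eqref{Equ:image direct}, together with the push-forward comparison for $e_{\max}$.
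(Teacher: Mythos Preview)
Your proof is correct and follows the same route as the paper: push forward to $\Spec\mathbb Z$, apply Banaszczyk \eqref{Banaszczyk}, use the duality isomorphism \eqref{Equ:image direct}, and bound $e_{\max}$ via \eqref{Bost-Kunnemann}. The only cosmetic difference is that you make the intermediate inequality $e_{\max}(f_*\overline H)\leqslant e_{\max}(\overline H)$ explicit before invoking \eqref{Bost-Kunnemann}, whereas the paper applies the upper half of \eqref{Bost-Kunnemann} directly to $f_*(\overline F^\vee\otimes\overline\omega_{\mathcal O_K})$; your observation that the argument in fact yields the sharper constant $-\log|\Delta_K|/\delta_K$ rather than $-\log|\Delta_K|$ is also correct.
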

\begin{proof}
In fact,
\[\begin{split}e_{\min}(f_*\overline F)&\geqslant -e_{\max}((f_*\overline F)^\vee)
-\log(3\delta_K/2)-\log(\rang F)\\&= -e_{\max}(
f_*(\overline F^\vee\otimes\overline{\omega}_{\mathcal
O_K}))-\log(3\delta_K/2)-\log(\rang F)\\
&\geqslant -\widehat{\mu}_{\max}(\overline
F^\vee\otimes\overline{\omega}_{\mathcal
O_K})+\frac
12\log(\delta_K)-\log(3\delta_K/2)-\log(\rang F)\\
&=\widehat{\mu}_{\min}(\overline
F)-\log|\Delta_K|-\frac{1}{2}\log
\delta_K-\log(3/2)-\log(\rang F),
\end{split}
\]
where the two inequalities comes from
\eqref{Banaszczyk} and
\eqref{Bost-Kunnemann}, the first equality
results from \eqref{Equ:image direct}.
\end{proof}

\begin{rema}
The comparison of minima and slopes has been discussed
in \cite{Soule95,Borek05,Gaudron07,Bost_Kunnemann}. Let
$\overline F$ be an arbitrary Hermitian vector bundle
on $\Spec\mathcal O_K$. Up to now, the best upper bound
for
\[\max_{1\leqslant i\leqslant \rang F}\big|
e_i(\overline F)-\mu_i(\overline F)\big|\] is
of order $\rang(F)\log\rang(F)$, where
$\mu_i(\overline F)$ is the $i^{\mathrm{th}}$
slope of $\overline F$ (see \cite[Definition
5.10]{Gaudron07}). It should be interesting
to know if this upper bound can be improved
to be of order $\log\rang(F)$.
\end{rema}

\begin{prop}\label{Pro:inversion comparison}
With the notation of Proposition
\ref{Pro:majoration de Fm par Fs}. One has,
for any $t\in\mathbb R$,
\[\mathcal F_t^SE_K\subset\mathcal F_{t-\beta}^ME_K,\]
where $\beta=D(E,g,g')+\log|\Delta_K| +\frac 12\log
\delta_K+\log(3/2)+\log(\rang E)$.
\end{prop}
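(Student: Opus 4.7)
The plan is to run the argument of Proposition \ref{Pro:majoration de Fm par Fs} in reverse: instead of starting with generators of small $g$-norm and deducing a minimum-slope bound, we start with a subbundle of large minimum slope and use Lemma \ref{Lem:emin minorer par mumin} to produce generators of small $g'$-norm, which we then convert into generators of small $g$-norm via the invariant $D(E,g,g')$.

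More concretely: I may assume $\mathcal F^S_t E_K \neq 0$, and I let $\overline F$ denote the saturated Hermitian subbundle of $(E, g')$ with $F_K = \mathcal F^S_t E_K$, so that \eqref{Equ:mu min de Et minore par t} gives $\widehat\mu_{\min}(\overline F) \geq t$. Applying Lemma \ref{Lem:emin minorer par mumin} to $\overline F$ (with the $g'$ metrics), and bounding $\log(\rang F) \leq \log(\rang E)$, I obtain
\[
 e_{\min}(\overline F, g') \;\geq\; t - \log|\Delta_K| - \tfrac{1}{2}\log\delta_K - \log(3/2) - \log(\rang E).
\]

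Next, I translate this into information about the $g$-minima. By definition of $e_{\min}$, for every $\varepsilon > 0$ the set $\mathbb B((F, g'|_F), e^{-e_{\min}(\overline F, g') + \varepsilon})$ spans $F_K$ over $K$. By the definition of $D(E,g,g')$, any $s \in F$ with $\|s\|'_\sigma \leq e^{-e_{\min}(\overline F, g') + \varepsilon}$ satisfies $\|s\|_\sigma \leq e^{-e_{\min}(\overline F, g') + \varepsilon + D(E,g,g')}$ for every $\sigma$. Therefore $F_K$ lies in the $K$-span of $\mathbb B((E, g), e^{-e_{\min}(\overline F, g') + D(E,g,g') + \varepsilon})$, i.e., $F_K \subset \mathcal F^M_{e_{\min}(\overline F, g') - D(E,g,g') - \varepsilon} E_K$. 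Letting $\varepsilon \to 0$ and using the left continuity of $\mathcal F^M$, and then substituting the lower bound on $e_{\min}(\overline F, g')$ above, I find
\[
 \mathcal F^S_t E_K = F_K \subset \mathcal F^M_{t-\beta} E_K
\]
with $\beta$ as in the statement.

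There is no serious obstacle: the proposition is essentially the combination of Lemma \ref{Lem:emin minorer par mumin} (which does the Banaszczyk/Bost--K\"unnemann bookkeeping) with the elementary fact that changing metric families by a uniform factor $e^{D(E,g,g')}$ shifts the minimum filtration by the same amount. The only points requiring a little care are the replacement of $\rang F$ by $\rang E$ (harmless since $\log$ is increasing) and the passage to the limit $\varepsilon \to 0$, which is legitimate because the minimum filtration is left continuous.
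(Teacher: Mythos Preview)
Your argument is correct and follows the same line as the paper: take the saturated subbundle $\overline F$ of $(E,g')$ with $F_K=\mathcal F^S_tE_K$, use $\widehat{\mu}_{\min}(\overline F)\geqslant t$ together with Lemma~\ref{Lem:emin minorer par mumin} to bound $e_{\min}(\overline F)$ from below, and then pass from $g'$ to $g$ via $D(E,g,g')$. The paper phrases this last step more directly as $e_{\min}(F,g)\geqslant e_{\min}(\overline F)-D(E,g,g')$ (which immediately yields $F_K\subset\mathcal F^M_{t-\beta}E_K$), whereas you unwind it through an $\varepsilon$-argument and left continuity; the content is identical.
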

\begin{proof}
Let $\overline F$ be the saturated Hermitian
vector subbundle of $(E,g')$ such that $F_K=
\mathcal F_t^SE_K$. By \cite[Proposition
2.2.1]{Chen08} one has
$\widehat{\mu}_{\min}(\overline F)\geqslant
t$. Lemma \ref{Lem:emin minorer par mumin}
implies
\[e_{\min}(\overline F)\geqslant t-\log|\Delta_K|
-\frac 12\log \delta_K-\log(3/2)-\log(\rang F).\]
Denote by $(F,g)$ the metrized vector bundle whose
metrics are induced from $(E,g)$. One has
\[e_{\min}(F,g)\geqslant e_{\min}(\overline F)-D(E,g,g')
\geqslant t-\beta,\] which implies that
$F_K\subset \mathcal F^M_{t-\beta}E_K$.
\end{proof}

\subsection{Comparison of asymptotic measures}

Let $B=\bigoplus_{n\geqslant 0}B_n$ be an approximable
graded algebra. For any integer $r\geqslant 2$ and any
element $\mathbf{n}=(n_i)\in\mathbb N^r$, denote by
\[\phi_{\mathbf{n}}:B_{n_1}\otimes\cdots\otimes B_{n_r}
\longrightarrow B_{n_1+\cdots+n_r}\] the
canonical homomorphism defined by the algebra
structure of $B$.

For each $n\geqslant 0$, let $(\mathscr
B_n,g_n=(\|\cdot\|_\sigma))$ be a metrized vector
bundle on $\Spec\mathcal O_K$ such that $B_n=\mathscr
B_{n,K}$. Let $(\mathscr
B_n,g_n'=(\|\cdot\|'_{\sigma}))$ be a Hermitian vector
bundle on $\Spec\mathcal O_K$. Define
\[D_n:=D(\mathscr B_n,g_n,g_n')=\max_{\sigma:K\rightarrow\mathbb C}
\sup_{0\neq s\in B_{n,\sigma,\mathbb
C}}\big|\log\|s\|_\sigma-\log\|s\|_\sigma'\big|.\]
Denote by $\mathcal F^M$ the minima filtration of
$(\mathscr B_n,g_n)$ and by $\mathcal F^S$ the slope
filtration of $(\mathscr B_n,g_n')$. Let
\[\nu_n^M=T_{\frac 1n}\nu_{(B_n,\mathcal F^M)}
\qquad\text{and}\qquad \nu_n^S=T_{\frac 1n}
\nu_{(B_n,\mathcal F^S)}.\] In this
subsection, we study the asymptotic behaviour
of measure sequences $(\nu^{M}_n)_{n\geqslant
1}$ and $(\nu^S_n)_{n\geqslant 1}$.

\begin{prop}\label{Pro:convergence de mesure slope}
Assume that the following conditions are satisfied:
\begin{enumerate}[(i)]
\item there exists an integer $n_0\geqslant 1$ and a
function $f:\mathbb N\rightarrow\mathbb R_+$
such that $f(n)=o(n)$ ($n\rightarrow\infty$)
and that, for any integer $l\geqslant 2$ and
any element
$\mathbf{n}=(n_i)_{i=1}^l\in\mathbb
Z_{\geqslant n_0}^l$, the height of
$\phi_{\mathbf{n}}$ is bounded from above by
$f(n_1)+\cdots+f(n_l)$;
\item $\displaystyle\sup_{n\geqslant 1}
\widehat{\mu}_{\max}(\mathscr B_n,g_n')/n<+\infty$.
\end{enumerate}
Then the sequence $(\frac
1n\widehat{\mu}_{\max}(\mathscr B_n,g_n'))_{n\geqslant
1 }$ converges in $\mathbb R$, and the sequence of
measures $(\nu^S_n)_{n\geqslant 1}$ converges vaguely
to a Borel probability measure $\nu$ on $\mathbb R$.
\end{prop}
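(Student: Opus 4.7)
The plan is to invoke Theorem \ref{Thm:convergence de mesures} applied to the slope filtrations $\mathcal F^S$. Since $\lambda_{\max}(B_n,\mathcal F^S)=\widehat{\mu}_{\max}(\mathscr B_n,g_n')$ and $T_{1/n}\nu_{(B_n,\mathcal F^S)}=\nu_n^S$, once one verifies that $B$ (endowed with the filtrations $\mathcal F^S$) is $f$-quasi-filtered, both conclusions of the proposition follow at once: condition (ii) gives the required boundedness of $\lambda_{\max}(B_n)/n$, and the hypothesis $f(n)=o(n)$ in (i) gives $f(n)/n\to 0$.

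The heart of the argument is therefore to check the $f$-quasi-filtered property. I would proceed as follows. Fix $l\geqslant 2$, indices $n_1,\dots,n_l\geqslant n_0$, and homogeneous elements $x_i\in B_{n_i}$. Write $t_i=\lambda(x_i)$, so that by definition of $\mathcal F^S$ (see \eqref{Equ:HN filtration}) there exists a saturated Hermitian subbundle $\overline F_i\subset(\mathscr B_{n_i},g_n')$ with $x_i\in F_{i,K}$ and $\widehat{\mu}_{\min}(\overline F_i)\geqslant t_i$. Next I would invoke the subadditivity of the minimal slope under tensor products of Hermitian vector bundles, namely
\[ \widehat{\mu}_{\min}\bigl(\overline F_1\otimes\cdots\otimes\overline F_l\bigr)\;\geqslant\;\sum_{i=1}^l\widehat{\mu}_{\min}(\overline F_i), \]
which is a standard consequence of the semistability of tensor products in the Arakelov setting (the analogue of the Ramanan--Ramanathan theorem, available through Bost's or Chen's work on semistability). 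Therefore $x_1\otimes\cdots\otimes x_l$ lies in $\mathcal F^S_{t_1+\cdots+t_l}$ of the tensor product.

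Finally I would apply the functoriality of the slope filtration \eqref{Equ:fonctorialte} to the multiplication map $\phi_{\mathbf n}:\mathscr B_{n_1,K}\otimes\cdots\otimes\mathscr B_{n_l,K}\to B_{n_1+\cdots+n_l}$. By hypothesis (i), $h(\phi_{\mathbf n})\leqslant f(n_1)+\cdots+f(n_l)$, so
\[ \phi_{\mathbf n}\bigl(x_1\otimes\cdots\otimes x_l\bigr)\;\in\;\mathcal F^S_{\,t_1+\cdots+t_l-(f(n_1)+\cdots+f(n_l))}B_{n_1+\cdots+n_l}, \]
which is precisely the inequality $\lambda(x_1\cdots x_l)\geqslant\sum(\lambda(x_i)-f(n_i))$ defining the $f$-quasi-filtered condition. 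Together with (ii) and the fact $f(n)=o(n)$, Theorem \ref{Thm:convergence de mesures} then yields the convergence of $\widehat{\mu}_{\max}(\mathscr B_n,g_n')/n$ and the vague convergence of $(\nu_n^S)_{n\geqslant 1}$ to a Borel probability measure.

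The main obstacle is the tensor-product estimate $\widehat{\mu}_{\min}(\bigotimes\overline F_i)\geqslant\sum\widehat{\mu}_{\min}(\overline F_i)$: it is by no means elementary (in contrast to the analogous statement for $\widehat{\mu}_{\max}$ in char.~$0$, the minimal slope case requires the arithmetic analogue of Ramanan--Ramanathan for semistability of tensor products of Hermitian bundles). All the other ingredients—the slope inequality, the functoriality of $\mathcal F^S$, and the identification $\lambda_{\max}=\widehat{\mu}_{\max}$—have already been set up in the previous subsections of the paper, so once the tensor estimate is in hand the proof reduces to an essentially formal application of Theorem \ref{Thm:convergence de mesures}.
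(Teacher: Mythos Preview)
Your strategy matches the paper's: show that $B$ equipped with the slope filtrations is quasi-filtered (via a tensor-product minimal-slope estimate combined with the height bound on $\phi_{\mathbf n}$ and the functoriality \eqref{Equ:fonctorialte}), then appeal to Theorem~\ref{Thm:convergence de mesures}. The one substantive difference lies in the tensor-product input. The paper does \emph{not} use the sharp inequality $\widehat{\mu}_{\min}\big(\bigotimes_i\overline F_i\big)\geqslant\sum_i\widehat{\mu}_{\min}(\overline F_i)$; instead it invokes the dual form of \cite[Theorem~1.1]{Chen_pm}, which only gives
\[
\widehat{\mu}_{\min}\Big(\bigotimes_{i}\overline{\mathscr B}_{n_i,t_i}\Big)\;\geqslant\;\sum_{i}\big(\widehat{\mu}_{\min}(\overline{\mathscr B}_{n_i,t_i})-\log\rang(B_{n_i})\big).
\]
This weaker estimate forces the modified auxiliary function $\widetilde f(n)=f(n)+\log\rang(B_n)$, and one then needs Proposition~\ref{Pro:bigness for graded algebras} (polynomial growth of $\rang B_n$ for an approximable algebra) to check $\widetilde f(n)/n\to 0$ before Theorem~\ref{Thm:convergence de mesures} applies. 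Your route is cleaner but, as you rightly flag, leans on the full arithmetic analogue of Ramanan--Ramanathan; the paper trades that deep input for a more elementary tensor-product bound at the cost of one extra, easy step.
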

\begin{proof}
For any $n\in\mathbb N$ and any $t\in\mathbb R$, denote
by $\overline{\mathscr B}_{n,t}$ the Hermitian vector
subbundle of $(\mathscr B_n,g_n' )$ such that $\mathscr
B_{n,t,K}=\mathcal F_t^SB_n$. Let $l\geqslant 2 $ be an
integer, $\mathbf{n}=(n_i)_{i=1}^l\in\mathbb
Z_{\geqslant n_0}^l$ and $(t_i)_{i=1}^l\in\mathbb R^l$.
By using the dual form of \cite[Theorem 1.1]{Chen_pm},
one obtains
\[\begin{split}\widehat{\mu}_{\min}(\overline{\mathscr B}_{n_1,t_1}
\otimes\cdots\otimes\overline{\mathscr
B}_{n_l,t_l})& \geqslant\sum_{i=1}^l \big(
\widehat{\mu}_{\min}(\overline{\mathscr
B}_{n_i,t_i})-\log\rang(B_{n_i})\big)\\
&\geqslant\sum_{i=1}^l\big(t_i-\log\rang(B_{n_i})\big).
\end{split}\]
Furthermore, by the assumption (i), the
height of $\phi_{\mathbf{n}}$ is no grater
than $f(n_1)+\cdots+f(n_l)$. Therefore, the
canonical image of $\mathcal
F_{t_1}^SB_{n_1}\otimes \cdots\otimes\mathcal
F^S_{t_l}B_{n_l}$ in $B_{n_1+\cdots+n_l}$
lies in $\mathcal F^S_{t}B_{n_1+\cdots+n_l}$
with
\[t=\sum_{i=1}^l\big(t_i-f(n_i)-\log\rang(B_{n_i})\big).\]
Let $\widetilde f:\mathbb N\rightarrow\mathbb
R_+$ such that $\widetilde
f(n)=f(n)+\log\rang(B_n)$. The argument above
shows that the graded algebra $B$ is
$\widetilde f$-quasi-filtered. Moreover, by
assumption (i) and Proposition
\ref{Pro:bigness for graded algebras}, one
has $\displaystyle\lim_{n\rightarrow+\infty}
\widetilde f(n)/n=0$. By Theorem
\ref{Thm:convergence de mesures}, the
sequence of measures converges vaguely to a
certain Borel probability measure $\nu$.
\end{proof}

\begin{coro}\label{Cor:comparison of limits}
Under the assumption of Proposition
\ref{Pro:convergence de mesure slope}, if
$\displaystyle\lim_{n\rightarrow\infty}D_n/n=0$, then
the sequence $(\frac 1n e_{\max}(\mathscr
B_n,g_n))_{n\geqslant 1}$ converges to
$\displaystyle\lim_{n\rightarrow\infty}\textstyle\frac
1n\widehat{\mu}_{\max}(\mathscr B_n,g_n')$; and the
sequence of measures $(\nu^M_n)_{n\geqslant 1 }$
converges vaguely to $\nu$.
\end{coro}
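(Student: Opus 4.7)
The plan is to transfer the first conclusion of Proposition \ref{Pro:convergence de mesure slope} on the slope filtration $\mathcal{F}^S$ to the minimum filtration $\mathcal{F}^M$ by applying the fibrewise comparisons of Propositions \ref{Pro:majoration de Fm par Fs} and \ref{Pro:inversion comparison} to each $\mathscr{B}_n$, and observing that, after rescaling by $1/n$, the various shifts all tend to $0$.

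First, applying those propositions to $\mathscr{B}_n$ equipped with the two families $g_n$ and $g_n'$ yields, for every $t \in \mathbb{R}$,
\[
\mathcal{F}^M_t B_n \subset \mathcal{F}^S_{t-\alpha_n} B_n
\qquad\text{and}\qquad
\mathcal{F}^S_t B_n \subset \mathcal{F}^M_{t-\beta_n} B_n,
\]
with $\alpha_n = \tfrac12 \log(\rang B_n) + D_n$ and $\beta_n = D_n + \log|\Delta_K| + \tfrac12 \log \delta_K + \log(3/2) + \log(\rang B_n)$. Since Proposition \ref{Pro:bigness for graded algebras} gives $\rang B_n \asymp n^{d(B)}$, we have $\log(\rang B_n)/n \to 0$, and combined with the hypothesis $D_n/n \to 0$ this forces $\alpha_n/n \to 0$ and $\beta_n/n \to 0$. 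Reading off the largest index where each filtration is nonzero, the inclusions imply $e_{\max}(\mathscr{B}_n, g_n) - \alpha_n \leqslant \widehat{\mu}_{\max}(\mathscr{B}_n, g_n') \leqslant e_{\max}(\mathscr{B}_n, g_n) + \beta_n$; dividing by $n$, letting $n \to \infty$ and using the convergence of $\tfrac1n \widehat{\mu}_{\max}(\mathscr{B}_n, g_n')$ furnished by Proposition \ref{Pro:convergence de mesure slope} gives the announced limit for $\tfrac1n e_{\max}(\mathscr{B}_n, g_n)$.

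For the measures, apply Proposition \ref{Pro:mesure et comparaison def iltration} to the identity map $B_n \to B_n$ endowed with the two filtrations to obtain $\nu_{(B_n, \mathcal{F}^S)} \succ \tau_{-\alpha_n} \nu_{(B_n, \mathcal{F}^M)}$ and $\nu_{(B_n, \mathcal{F}^M)} \succ \tau_{-\beta_n} \nu_{(B_n, \mathcal{F}^S)}$. Rescaling by $T_{1/n}$ (using the identity $T_{1/n}\tau_a = \tau_{a/n} T_{1/n}$) yields
\[
\nu^S_n \succ \tau_{-\alpha_n/n}\,\nu^M_n \qquad\text{and}\qquad \nu^M_n \succ \tau_{-\beta_n/n}\,\nu^S_n.
\]
For a bounded continuous increasing function $h$ that is constant outside some compact interval, uniform continuity of $h$ together with $\alpha_n/n, \beta_n/n \to 0$ gives $\bigl|\int h\,\mathrm{d}\nu^M_n - \int h\,\mathrm{d}\nu^S_n\bigr| \to 0$. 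Any function in $C_0^\infty(\mathbb{R})$ is a difference of two such increasing functions (split the derivative into positive and negative parts and integrate), and $C_0^\infty(\mathbb{R})$ is dense in $C_c(\mathbb{R})$ with respect to $\|\cdot\|_{\sup}$, exactly as invoked in the proof of Theorem \ref{Thm:convergence de mesures}. Combined with the vague convergence $\nu^S_n \to \nu$ from Proposition \ref{Pro:convergence de mesure slope}, this yields the vague convergence $\nu^M_n \to \nu$.

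The only non-bookkeeping point is ensuring the shifts $\alpha_n/n$ and $\beta_n/n$ vanish in the limit, which is precisely where polynomial growth of $\rang B_n$ (hence the approximability hypothesis via Proposition \ref{Pro:bigness for graded algebras}) enters through the control $\log(\rang B_n)/n \to 0$. Everything else is a transcription of the measure-theoretic arguments already developed in the preceding subsections.
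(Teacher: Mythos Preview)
Your proof is correct and follows essentially the same approach as the paper's: both use Propositions \ref{Pro:majoration de Fm par Fs} and \ref{Pro:inversion comparison} to sandwich $\nu^M_n$ between shifts of $\nu^S_n$ whose drift vanishes after normalization, and then transfer the conclusions of Proposition \ref{Pro:convergence de mesure slope}. Your write-up is more detailed than the paper's (which simply records $\tau_{\alpha_n}\nu^S_n\succ\nu^M_n\succ\tau_{-\beta_n}\nu^S_n$ and declares the result), in particular making explicit the squeeze on $e_{\max}$ and the density argument for test functions, but there is no substantive difference in strategy.
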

\begin{proof}
By Propositions \ref{Pro:mesure et
comparaison def iltration},
\ref{Pro:majoration de Fm par Fs} and
\ref{Pro:inversion comparison}, for any
integer $n\geqslant 1$, one has
\[\tau_{\alpha_n}\nu^S_n\succ\nu^M_n\succ\tau_{-\beta_n}\nu^S_n,\]
where
\[\alpha_n=\frac 1{2n}\log\rang(B_n)+\frac{D_n}{n},\quad
\beta_n=\frac{1}{n}\big( \log|\Delta_K|+\frac 12\log
\delta_K-\log(3/2)-\log\rang(B_n)\big).
\]
As
$\displaystyle\lim_{n\rightarrow\infty}\alpha_n=\lim_{n\rightarrow\infty}
\beta_n=0$, the assertions result from Proposition
\ref{Pro:convergence de mesure slope}.
\end{proof}

\begin{rema}\label{Rem:condition de rumely}
The assumptions of Proposition \ref{Pro:convergence de
mesure slope} is fulfilled notably when the following
conditions are satisfied:
\begin{quote}
\it \begin{enumerate}[(a)]\item the $K$-algebra
structure on $B$ gives rise to an $\mathcal
O_K$-algebra structure on $\bigoplus_{n\geqslant
0}\mathscr B_n$;
\item
for any $(m,n)\in\mathbb N^2$, any
$\sigma:K\rightarrow\mathbb C$ and for all $s\in
B_{n,\sigma,\mathbb C}$, $s'\in B_{m,\sigma,\mathbb
C}$, one has
$\|ss'\|_{\sigma}\leqslant\|s\|_\sigma\|s'\|_\sigma$;
\item $e_{\max}(\mathscr B_n,g_n)=O(n)$
($n\rightarrow\infty$).
\end{enumerate}
\end{quote}
Note that, under the conditions (a) and (b) above, the
height of $\phi_{\mathbf{n}}$,
$\mathbf{n}=(n_i)_{i=1}^l\in\mathbb Z_{\geqslant
n_0}^l$, does not exceed $\frac 12
\sum_{i=1}^l\log\rang(B_{n_i}) $. The equivalence of
the condition (c) and the condition (ii) in Proposition
\ref{Pro:convergence de mesure slope} results from
\eqref{Bost-Kunnemann}. See \cite[Remark 4.1.6]{Chen08}
for details. One can also compare the conditions above
with those in \cite[page 12]{Rumely_Lau_Varley}.

In this particular case, the graded algebra
$B$, equipped with minimum filtrations, is
actually $0$-quasi-filtered, where $0$
denotes the constant zero function. So we may
deduce the convergence of $(\frac
1ne_{\max}(\mathscr B_n,g_n))_{n\geqslant 1}$
and $(\nu_n^M)_{n\geqslant 1}$ directly from
Theorem \ref{Thm:convergence de mesures}.
However, as we shall see in the proof of
Proposition \ref{Pro:convergence pour
approximable algebra}, the comparison of
limits established in Corollary
\ref{Cor:comparison of limits} will play an
important role in the study of arithmetic
volume function. So we have chosen an
indirect approach to emphasis this
comparison.
\end{rema}

\section{Approximable graded linear series in arithmetic}

In this section, we recall a result on Fujita
approximation for graded linear series due to
Lazarsfeld and Musta\c{t}\v{a}
\cite{Lazarsfeld_Mustata08}. We then give
several examples of approximable graded
linear series which come naturally from the
arithmetic setting.
\subsection{Reminder on geometric Fujita approximation}
\label{SubSec:reminderon geom fujita} Let $K$
be a field and $X$ be a projective variety
(i.e. integral projective scheme) defined
over $K$. Let $L$ be a big line bundle on
$X$. Denote by $B :=\bigoplus_{n\geqslant
0}H^0(X,L^{\otimes n })$ the graded
$K$-algebra of global sections of tensor
powers of $L$. For {\it graded linear series}
of $L$ we mean a graded sub-$K$-algebra of $B
$. The following definition is borrowed from
\cite{Lazarsfeld_Mustata08}.
\begin{defi}
We say that a graded linear series
$W=\bigoplus_{n\geqslant 0}W_n$ of $L$ {\it
contains an ample divisor} if there exists an
integer $p\geqslant 1$, an ample line bundle
$A$ and an effective line bundle $M$ on $X$,
together with a non-zero section $s\in
H^0(X,M)$, such that $L^{\otimes p }\cong
A\otimes M$, and that the homomorphism of
graded algebras
\[\bigoplus_{n\geqslant 0}H^0(X,A^{\otimes n})\longrightarrow\bigoplus_{n\geqslant 0}
H^0(X,L^{\otimes np})\] induced by $s$
factors through $\bigoplus_{n\geqslant
0}W_{np}$.
\end{defi}

\begin{rema}
In \cite[Definition
2.9]{Lazarsfeld_Mustata08}, this condition
was called the ``condition (C)''. As a big
divisor is always the sum of an ample divisor
and an effective one, the total graded linear
series $B $ contains an ample divisor.
\end{rema}

\begin{defi}
Let $W=\bigoplus_{n\geqslant 0}W_n$ be a
graded linear series of $L$. Denote by
$\mathrm{vol}(W )$ the number
\begin{equation}
\mathrm{vol}(W
):=\limsup_{n\rightarrow\infty}\frac{\rang(W_n)}{n^{\mathrm{dim}X}/(\mathrm{dim}
X )!}.
\end{equation}
\end{defi}

Note that $\mathrm{vol}(B )=\mathrm{vol}(L)$.
For a general linear series $W $ of $L$, one
has $\mathrm{vol}(W
)\leqslant\mathrm{vol}(L)$. By using the
method of Okounkov bodies introduced in
\cite{Okounkov96}, Lazarsfeld and
Musta\c{t}\v{a} have established the
following generalization of Fujita's
approximation theorem.

\begin{theo}[Lazarsfeld-Musta\c{t}\v{a}]
\label{Thm:Lazarsfeld-Mustata} Assume that
$W=\bigoplus_{n\geqslant 0}W_n$ is a graded
linear series of $L$ which contains an ample
divisor and such that $W_n\neq 0$ for
sufficiently large $n$. Then $W$ is
approximable.
\end{theo}

In particular, the total graded linear series
$B$ is approximable. In \cite[Remark
3.4]{Lazarsfeld_Mustata08}, the authors have
explained why their theorem implies the
Fujita's approximation theorem in its
classical form. We include their explanation
as the corollary below.

\begin{coro}[Geometric Fujita
approximation]\label{Cor:geometric fujita
appr} For any $\varepsilon>0$, there exists
an integer $p\geqslant 1$, a birational
projective morphism $\varphi:X'\rightarrow
X$, an ample line bundle $A$ and an effective
line bundle $M$ such that
\begin{enumerate}[1)]
\item one has $\varphi^*(L^{\otimes p})\cong A\otimes M$;
\item $\mathrm{vol}(A)\geqslant p^{\dim
X}(\vol(L)-\varepsilon)$.
\end{enumerate}
\end{coro}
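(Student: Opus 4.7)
The plan is to deduce the corollary from Theorem~\ref{Thm:Lazarsfeld-Mustata} applied to the total graded linear series $B=\bigoplus_{n\geqslant 0}H^0(X,L^{\otimes n})$ of $L$. Since $L$ is big, a Kodaira-type argument gives an integer $q\geqslant 1$ and a decomposition $L^{\otimes q}\cong A_0\otimes M_0$ with $A_0$ ample and $M_0$ effective; this shows that $B$ contains an ample divisor in the sense of \S\ref{SubSec:reminderon geom fujita}, and condition (a) is clear. Hence $B$ is approximable, with $d(B)=\dim X$ and $\vol(B)=\vol(L)$. Given $\varepsilon>0$, I fix a small auxiliary $\varepsilon'>0$ (to be calibrated at the end). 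By the approximability of $B$ I obtain an integer $p\geqslant 1$ such that
\[\lim_{n\to\infty}\frac{\rang\Image(S^nB_p\to B_{np})}{n^{\dim X}/(\dim X)!}\;\geqslant\;(1-\varepsilon')\,p^{\dim X}\,\vol(L).\]

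Next I realize these numerical data geometrically. Let $\varphi\colon X'\to X$ be a projective birational morphism on which the rational map defined by $|L^{\otimes p}|$ becomes a morphism $g\colon X'\to\mathbb{P}(B_p^\vee)$ (take $X'$ to be a blowup of $X$ resolving the base locus of $|L^{\otimes p}|$), and denote by $E\geqslant 0$ the resulting fixed divisor, so that
\[\varphi^*L^{\otimes p}\cong N\otimes\mathcal{O}_{X'}(E),\qquad N:=g^*\mathcal{O}(1).\]
Then $N$ is base-point free, hence nef, and big because $L$ is. Multiplication by the canonical section $e^{\otimes n}\in H^0(X',\mathcal{O}(nE))$ embeds the pullback of $\Image(S^nB_p\to B_{np})$ into $H^0(X',N^{\otimes n})\cdot e^{\otimes n}$, so
\[h^0(X',N^{\otimes n})\;\geqslant\;\rang\Image(S^nB_p\to B_{np}),\]
and passing to the limit gives $\vol(N)\geqslant(1-\varepsilon')p^{\dim X}\vol(L)$.

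Finally I upgrade $N$ from nef-and-big to ample. Since $N$ is nef and big on the projective variety $X'$, Kodaira's lemma combined with the continuity of the volume function on the big cone of $N^1(X')_{\mathbb{R}}$ yields an integer $m\geqslant 1$ together with a linear equivalence $mN\sim A+F_0$ in which $A$ is an ample Cartier divisor, $F_0$ is an effective Cartier divisor, and $\vol(A)\geqslant(1-\varepsilon')\,m^{\dim X}\,\vol(N)$. Setting $p':=mp$ and $M:=\mathcal{O}_{X'}(F_0+mE)$ produces $\varphi^*L^{\otimes p'}\cong A\otimes M$ with $A$ ample, $M$ effective, and
\[\vol(A)\;\geqslant\;(1-\varepsilon')^2\,(p')^{\dim X}\,\vol(L).\]
It remains to choose $\varepsilon'$ small enough that $(1-\varepsilon')^2\vol(L)\geqslant\vol(L)-\varepsilon$.

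The core of the argument is the approximability of $B$, which is Theorem~\ref{Thm:Lazarsfeld-Mustata}. The only mildly subtle step is the final ampleness upgrade: the construction via $|L^{\otimes p}|$ produces only a base-point-free (hence nef) line bundle $N$, and replacing it by an ample $A$ of essentially the same volume requires Kodaira's lemma and the continuity of $\vol$ on the big cone---both standard tools in birational geometry, but worth isolating since nothing else in the proof depends on anything beyond the approximability definition.
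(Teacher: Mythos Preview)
Your proof is correct and follows essentially the same route as the paper: both deduce the result from the approximability of $B$ (Theorem~\ref{Thm:Lazarsfeld-Mustata}), blow up the base locus of $|L^{\otimes p}|$ to produce a nef and big line bundle $N$ with $\vol(N)$ close to $p^{\dim X}\vol(L)$, and then perturb to achieve ampleness. The only cosmetic difference is in the last step, which the paper dispatches with the one-line remark ``a slight perturbation of $L$ permits to conclude,'' whereas you carry it out on $X'$ via Kodaira's lemma and continuity of the volume; note that the continuity you actually need is the elementary one for nef classes (where $\vol$ equals the top self-intersection), so there is no circularity.
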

\begin{proof}
For any integer $p$ such that $B_p\neq 0$,
let $\varphi_p:X_p\rightarrow X$ be the
blow-up (twisted by $L$) of $X$ along the
base locus of $B_p$. That is
\[X_p=\mathrm{Proj}\Big(\mathrm{Im}
\Big(\bigoplus_{n\geqslant
0}S^n(\pi^*B_p)\longrightarrow\bigoplus_{n\geqslant
0 }L^{\otimes{np}}\Big)\Big).\] Denote by
$E_p$ the exceptional divisor and by $s$ the
global section of $\mathcal O(E_p)$ which
trivializes $\mathcal O(E_p)$ outside the
exceptional divisor. By definition, one has
$\mathcal
O_{X_p}(1)\cong\varphi_p^*L^{\otimes p
}\otimes\mathcal O(-E_p)$. On the other hand,
the canonical homomorphism
$\varphi_p^*\pi^*B_p\rightarrow\mathcal
O_{X_p}(1)$ is surjective, therefore
corresponds to a morphism of schemes
$i_p:X_p\rightarrow\mathbb P(B_p)$ such that
$i_p^*\mathcal O_{\mathbb P(B_p)}(1)=\mathcal
O_{X_p}(1) $. The restriction of global
sections of $\mathcal O_{\mathbb P(B_p)}(n)$
on $X_p$ gives an injective homomorphism
\[\mathrm{Im}(S^nB_p\rightarrow B_{np})
\longrightarrow H^0(X_p,\mathcal
O_{X_p}(n)),\] where we have identified
$H^0(X_p,\mathcal O_{X_p}(n))$ with a
subspace of $H^0(X_p,\varphi_p^*L^{\otimes n
})$ via $s$. Since the total grade linear
series $B$ is approximable, one has
\[\sup_p\liminf_{n\rightarrow\infty}\frac{\rang(\mathrm{Im}(S^nB_p\rightarrow B_{np}))}{
\rang B_{np}}=1,\] which implies
\[\sup_p\lim_{n\rightarrow\infty}\frac{\rang H^0(X_p,\mathcal O_{X_p}(n))}{
(np)^d/d!}=\mathrm{vol}(L).\]

The line bundle $\mathcal O_{X_p}(1)$
constructed above is actually nef and big.
However, a slight perturbation of $L$ permits
to conclude.
\end{proof}

\subsection{Arithmetic volume of approximable graded linear series}
In the sequel, $K$ denotes a number field and $\mathcal
O_K$ denotes its integer ring. Let
$\delta_K:=[K:\mathbb Q]$ be the degree of $K$ over
$\mathbb Q$. Let $\pi:\mathscr
X\rightarrow\Spec\mathcal O_K$ be a projective
arithmetic variety of total dimension $d$ and
$X=\mathscr X_K$. Let $\overline{\mathscr L}$ be a
Hermitian line bundle on $\mathscr X$, supposed to be
big in the sense of Moriwaki \cite{Moriwaki00}. Let
$L=\mathscr L_K$. Note that $L$ is a big line bundle on
$X$.

Let $B$ be a graded linear series of $L$. For any
integer $n\geqslant 0$, denote by $\mathscr B_n$ the
saturation of $B_n$ in $\pi_*(\mathscr L^{\otimes n})$.
For any embedding $\sigma:K\rightarrow\mathbb C$,
denote by $\|\cdot\|_{\sigma,\sup}$ the sup-norm on
$B_{n,\sigma,\mathbb C}$. Thus we obtain a metrized
vector bundle $(\mathscr B_n,g_n)$ with
$g_n=(\|\cdot\|_{\sigma,\sup})_{\sigma:K\rightarrow\mathbb
C }$.

Inspired by \cite{Moriwaki00}, we define the arithmetic
volume function of $B$ as follows:
\[\widehat{\mathrm{vol}}(B):=\limsup_{n\rightarrow\infty}\frac{\widehat{h}^0(\mathscr B_n,g_n)}{
n^d/d!},\] where for any metrized vector bundle
$\overline E=(E,(\|\cdot\|_\sigma))$ on $\Spec\mathcal
O_K$, $\widehat{h}^0(\overline E)$ is defined as
\[\widehat{h}^0(\overline E):=\log\#\{s\in E\,|\,
\forall\sigma:K\rightarrow\mathbb
C,\,\|s\|_\sigma\leqslant 1\}.\]

\begin{prop}\label{Pro:convergence pour approximable algebra}
Assume that the graded linear series $B$ is
approximable. Then the sequence
$(\frac{1}{n}e_{\max}(\mathscr
B_n,g_n))_{n\geqslant 1}$ converges in
$\mathbb R$. Furthermore, for any integer
$n\geqslant 1$, let $\nu_n:=T_{\frac
1n}\nu_{(B_n,\mathcal F^M)}$ be the
normalized probability measure associated to
the minimum filtration of $(\mathscr
B_n,g_n)$, then the sequence of measures
$(\nu_n)_{n\geqslant 1}$ converges vaguely to
a Borel probability measure $\nu_B$.
Moreover, one has
\begin{equation}\label{Equ:integral x+ est volume}\int_{\mathbb
R}\max\{x,0\}\,\nu_B(\mathrm{d}x)=\frac{\widehat{\mathrm{vol}}
(B)}{\delta_Kd\,\mathrm{vol}(B)
}.\end{equation}
\end{prop}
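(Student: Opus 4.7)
The plan is to reduce the convergence assertions to Corollary \ref{Cor:comparison of limits} via an appropriate choice of auxiliary Hermitian metrics, and then to extract the integral formula by a direct comparison of $\widehat h^0(\mathscr B_n,g_n)$ with a Riemann-type sum over the positive logarithmic minima of $(\mathscr B_n,g_n)$.

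To place ourselves in the setting of Proposition \ref{Pro:convergence de mesure slope}, I would take for each $n$ the Hermitian family $g_n'=(\|\cdot\|'_\sigma)_\sigma$ on $\mathscr B_{n,\sigma,\mathbb C}$ given by the inner product associated to John's ellipsoid for the sup-norm unit ball. John's theorem then gives $D_n=D(\mathscr B_n,g_n,g_n')\le\tfrac 12\log\rang(\mathscr B_n)$; combined with the bound $\rang(\mathscr B_n)=O(n^{d-1})$ from Proposition \ref{Pro:bigness for graded algebras}, this forces $D_n=O(\log n)=o(n)$.

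With this choice I would verify the two hypotheses of Proposition \ref{Pro:convergence de mesure slope}. Since $\mathscr B_n$ is the saturation of $B_n$ inside $\pi_*(\mathscr L^{\otimes n})$, the multiplication maps $\phi_{\mathbf n}$ give an $\mathcal O_K$-algebra structure on $\bigoplus_n\mathscr B_n$ and are sup-norm sub-multiplicative at every archimedean place, so $h(\phi_{\mathbf n})\le 0$ with respect to the metrics $g_{\bullet}$. Replacing $g_{\bullet}$ by $g_{\bullet}'$ contributes at most $D_{n_1}+\cdots+D_{n_l}+D_{n_1+\cdots+n_l}$ to the archimedean part of the height, and this is bounded by $f(n_1)+\cdots+f(n_l)$ for some $f$ of order $O(\log n)=o(n)$, yielding hypothesis (i). For hypothesis (ii), a standard arithmetic estimate (dominating $\overline{\mathscr L}$ by an ample Hermitian line bundle on $\mathscr X$) gives $e_{\max}(\mathscr B_n,g_n)=O(n)$, and Proposition \ref{Pro:majoration de Fm par Fs} then yields $\widehat\mu_{\max}(\mathscr B_n,g_n')/n=O(1)$. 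Proposition \ref{Pro:convergence de mesure slope} together with Corollary \ref{Cor:comparison of limits} now delivers the convergence of $\tfrac1n e_{\max}(\mathscr B_n,g_n)$ and the vague convergence $\nu_n\to\nu_B$ to a Borel probability measure.

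For the integral formula, write $r_n=\rang(\mathscr B_n)$ and let $e_1(n)\ge\cdots\ge e_{r_n}(n)$ denote the logarithmic successive minima of $(\mathscr B_n,g_n)$. Expanding $\nu_{B_n}=\tfrac{1}{r_n}\sum_i\delta_{e_i(n)}$ and applying the dilation $T_{1/n}$ gives
\[\sum_{e_i(n)\ge 0}e_i(n)=n\,r_n\int\max\{x,0\}\,\nu_n(\mathrm dx).\]
On the other hand, a classical Minkowski--Gillet--Soul\'e-type lattice-point estimate provides
\[\bigl|\widehat h^0(\mathscr B_n,g_n)-\delta_K\sum_{e_i(n)\ge 0}e_i(n)\bigr|\le C\,r_n\log(r_n+2),\]
for a constant $C$ depending only on $K$. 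Dividing by $n^d/d!$, using the asymptotic $r_n(d-1)!/n^{d-1}\to\vol(B)$ and the vague convergence $\nu_n\to\nu_B$ (the function $\max\{x,0\}$ being unproblematic because $\mathrm{supp}(\nu_n)$ is uniformly bounded from above by the first conclusion), one obtains
\[\lim_{n\to\infty}\frac{\widehat h^0(\mathscr B_n,g_n)}{n^d/d!}=\delta_K\,d\,\vol(B)\int\max\{x,0\}\,\nu_B(\mathrm dx),\]
which proves \eqref{Equ:integral x+ est volume} and incidentally shows that the $\limsup$ defining $\widehat{\mathrm{vol}}(B)$ is a genuine limit. The main obstacle is the last paragraph: the Minkowski-type comparison between $\widehat h^0$ and the positive-part integral of the minimum filtration must hold with an error of size $o(n^d)$, and the sub-linear controls $D_n=O(\log n)$ and $f(n)=O(\log n)$ set up in the earlier steps are precisely what make this admissible.
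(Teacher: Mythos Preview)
Your convergence argument is essentially the paper's: introduce John-ellipsoid Hermitian metrics $g_n'$ with $D_n=O(\log r_n)$, check the hypotheses of Proposition~\ref{Pro:convergence de mesure slope}, and invoke Corollary~\ref{Cor:comparison of limits}. One point you pass over quickly is the bound $e_{\max}(\mathscr B_n,g_n)=O(n)$; the paper does not reduce this to ``dominating by an ample line bundle'' but rather argues directly with the Hermitian metrics $g_n'$: it picks a generic (Zariski-dense) set $\Sigma$ of algebraic points of $X$, applies the slope inequality to the evaluation map $\pi_*(\mathscr L^{\otimes n})\to\bigoplus_{P\in\Sigma_n}P^*\mathscr L$, and concludes that $\tfrac1n\widehat\mu_{\max}(\pi_*(\mathscr L^{\otimes n}),g_n')$ is bounded by the essential minimum of $\overline{\mathscr L}$. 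This is worth spelling out, since it is the only place where the geometry of $\mathscr X$ enters.

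For the formula \eqref{Equ:integral x+ est volume} you take a different route from the paper. The paper proves a lemma (Lemma within the proof) comparing $\widehat h^0(E,g)$ not with the minimum filtration of $(E,g)$ but with the \emph{slope} filtration of the Hermitian companion $(E,g')$: using \cite[Lemma~7.1 and Proposition~3.3]{Chen_bigness} it shows
\[
\Big|\delta_K r\!\int\max\{x,0\}\,\nu_{(E,\mathcal F^S)}(\mathrm dx)-\widehat h^0(E,g)\Big|\le(\delta_K D+\log|\Delta_K|)r+C_0(r),\qquad C_0(r)\ll r\log r,
\]
and then passes to the limit along $(\nu_n')$, which has the same vague limit $\nu_B$ as $(\nu_n)$. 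Your direct estimate $\big|\widehat h^0(\mathscr B_n,g_n)-\delta_K\sum_{e_i(n)\ge 0}e_i(n)\big|\le C\,r_n\log(r_n+2)$ is correct in outcome, but it is not a single ``classical Minkowski--Gillet--Soul\'e'' statement for non-Hermitian norms over a general $\mathcal O_K$; justifying it with this error term effectively requires passing through the John ellipsoid and the slope/HN comparison (Propositions~\ref{Pro:majoration de Fm par Fs} and~\ref{Pro:inversion comparison}), i.e.\ precisely the detour the paper makes explicit. So your approach is valid, but the step you flag as ``the main obstacle'' is exactly where the paper's extra lemma does the real work.
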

\begin{proof}
To establish the convergence of
$(\frac{1}{n}e_{\max}(\mathscr
B_n,g_n))_{n\geqslant 1}$ and
$(\nu_n)_{n\geqslant 1 }$, it suffices to
prove that $(\mathscr B_n,g_n)$ verify the
conditions in Remark \ref{Rem:condition de
rumely}, where (a) and (b) are
straightforward. In order to prove the
condition (c), we introduce, for any integer
$n\geqslant 1$, an auxiliary family
$g_n'=(\|\cdot\|_\sigma)_{\sigma:K\rightarrow\mathbb
C}$ of Hermitian norms on $\pi_*(\mathscr
L^{\otimes n })$, invariant under complex
conjugation, and such that, for any $0\neq
s\in H^0(X_\sigma(\mathbb C
),L_{\sigma,\mathbb C})$.
\begin{equation}\label{Equ:auxilliary hermitian metric}
\log\|s\|_{\sigma}-\frac
32\log(\rang\pi_*(\mathscr L^{\otimes n
}))\leqslant \log\|s\|_{\sigma,\sup}
\leqslant\log\|s\|_{\sigma}-\frac12\log(\rang
\pi_*(\mathscr L^{\otimes n})).\end{equation}
This is always possible by the argument of
the ellipsoids of John or L\"owner, see
\cite[\S 2]{Gaudron07}. It suffices to
establish the estimation
$\widehat{\mu}_{\max}(\pi_*(\mathscr
L^{\otimes n }),g_n')\ll n$. Let $\Sigma$ be
a generic family (i.e., $\Sigma$ is dense in
$X$) of algebraic points in $X$. Each point
$P$ in $\Sigma$ extends in a unique way to a
$\mathcal O_{K(P)}$ point of $\mathscr X$,
where $K(P)$ is the field of definition of
$P$. Therefore we may consider elements in
$\Sigma$ as points of $\mathscr X$ valued in
algebraic integer rings. Now consider the
evaluation map $\pi_*(\mathscr L^{\otimes
n})\longrightarrow\bigoplus_{P\in\Sigma}P^*\mathscr
L$. It is generically injective since
$\Sigma$ is dense in $X$. Therefore, there
exists a subset $\Sigma_n$ of $\Sigma$ whose
cardinal is $\rang(\pi_*(\mathscr L^{\otimes
n}))$ and such that the evaluation map
\[\phi_n:\pi_*(\mathscr L^{\otimes
n})\longrightarrow\bigoplus_{P\in\Sigma_n}P^*\mathscr
L\]is still generically injective. Therefore, after
suitable extension of the ground field, the slope
inequality asserts that
\[\widehat{\mu}_{\max}(\pi_*(\mathscr L^{\otimes n}),g_n')
\leqslant
\sup_{P\in\Sigma_n}nh_{\overline{\mathscr
L}}(P)+h(\phi_n)\leqslant n
\sup_{P\in\Sigma}h_{\overline{\mathscr
L}}(P).\] Since $\Sigma$ is arbitrary, we
obtain that $\frac
1n\widehat{\mu}_{\max}(\pi_*(\mathscr
L^{\otimes n}),g_n')$ is bounded from above
by the essential minimum of
$\overline{\mathscr L}$ (see \cite[\S 5
]{Zhang95} for definition. Attention, in
\cite{Zhang95}, the author denoted it as
$e_1(\overline{\mathscr L})$).

The equality \eqref{Equ:integral x+ est volume} comes
from the following Lemma.
\begin{lemm}\label{Lem:comparaison des h0}
Let $(E,g=(\|\cdot\|_\sigma))$ be a metrized vector
bundle and $(E,g'=(\|\cdot\|_{\sigma}'))$ be a
Hermitian vector bundle on $\Spec\mathcal O_K$. Assume
that $r:=\rang(E)>0$. Let
\[D=\max_{\sigma:K\rightarrow\mathbb C}\sup_{0\neq s\in E_{\sigma,\mathbb
C}} \big|\log\|s\|_{\sigma}-\log\|s\|_\sigma'\big|.\]
Denote by $\nu$ the Borel probability measure
associated to the Harder-Narasimhan filtration of
$\overline E:=(E,g')$. Then there exists a function
$C_0:\mathbb N_*\rightarrow\mathbb R_+$, independent of
all data above, satisfying $C_0(n)\ll n\log n$, and
such that
\[\bigg|\delta_K r\int_{\mathbb R}\max\{x,0\}\,\nu(\mathrm{d}x)
-\widehat{h}^0(E,g)\bigg|\leqslant
(\delta_KD+\log|\Delta_K|)r+C_0(r).
\]
\end{lemm}
\begin{proof}[Proof of the Lemma]
Denote by $\overline M=(\mathcal
O_K,(\|\cdot\|_{\sigma}^M))$ the Hermitian line bundle
on $\Spec\mathcal O_K$ such that
$\|\mathbf{1}\|_{\sigma}^M=e^{-D}$, where $\mathbf{1}$
is the unit element in $\mathcal O_K$. By d\'efinition,
one has $\widehat{h}^0(\overline E\otimes\overline
M^\vee)\leqslant\widehat{h}^0(\overline
E,g)\leqslant\widehat{h}^0(\overline E\otimes\overline
M )$. Moreover, the Borel probability measures
associated to the Harder-Narasimhan filtrations of
$\overline E\otimes\overline M$ and $\overline
E\otimes\overline M^\vee $ are respectively
$\tau_{D}\nu$ and $\tau_{-D}\nu$. By \cite[Lemma 7.1
and Proposition 3.3]{Chen_bigness}, there exists a
function $C_0:\mathbb N_*\rightarrow\mathbb R_+$,
independent of $\overline E$, satisfying the estimation
$C_n(n)\ll n\log n$, and such that
\begin{gather*}\widehat{h}^0(\overline E\otimes\overline M^\vee)
\geqslant \delta_Kr\int_{\mathbb
R}\max\{x,0\}\,\tau_{-D}\nu(\mathrm{d}x)-r\log|\Delta_K|-C_0(r)\\
\widehat{h}^0(\overline E\otimes\overline M)\leqslant
\delta_Kr\int_{\mathbb
R}\max\{x,0\}\,\tau_D\nu(\mathrm{d}x)+
r\log|\Delta_K|+C_0(r)
\end{gather*}
Since $\max\{x+D,0\}\leqslant\max\{x,0\}+D$ and
$\max\{x-D,0\}\geqslant\max\{x,0\}-D$, we obtain the
desired inequality.
\end{proof}

By using Lemma \ref{Lem:comparaison des h0}, we obtain
\[\bigg|\widehat{h}^0(\mathscr B_n,g_n)-nr_n\delta_K\int_{\mathbb
R}\max\{x,0\}\,\nu_n'(\mathrm{d}x)\bigg|\leqslant
\frac{3}{2}\delta_Kr_n\log(r_n)+r_n\log|\Delta_K|+C_0(r_n),
\]
where $r_n=\rang(B_n)$, and $\nu_n'$ is the Borel
probability measure associated to $(\mathscr B_n,g_n')$
(here we still use $g_n'$ to denote the metrics on
$\mathscr B_n$ induced from $(\pi_*(\mathscr L^{\otimes
n }),g_n')$). We have shown that $(\nu_n')_{n\geqslant
1 }$ also converge vaguely to $\nu_B$. Furthermore,
Proposition \ref{Pro:bigness for graded algebras} show
that $r_n=\mathrm{vol}(B)n^{d-1}/(d-1)!+o(n^{d-1})$. By
passing to limit, we obtain \eqref{Equ:integral x+ est
volume}.
\end{proof}

\subsection{Examples of approximable graded linear series}

In this subsection, we give some examples of
approximable graded linear series of $L$ which come
from the arithmetic.

Denote by $B =\bigoplus_{n\geqslant
0}H^0(X,L^{\otimes n })$ the sectional
algebra of $L$. For any real number
$\lambda$, let $B^{[\lambda]}$ be the graded
sub-$K$-module of $B$ defined as follows:
\begin{equation}\label{Equ:Blambda}B^{[\lambda]}_0:=K,\qquad B^{[\lambda]}_n:=
\mathrm{Vect}_K\big(\{s\in
B_n\,|\,\forall\,\sigma:K\rightarrow\mathbb
C,\,\|s\|_{\sigma,\sup}\leqslant e^{-\lambda
n}\}\big).\end{equation} The following property is
straightforward from the definition.
\begin{prop}
For any $\lambda\in\mathbb R$,
$B^{[\lambda]}$ is a graded linear series of
$L$.
\end{prop}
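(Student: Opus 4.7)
The plan is to verify directly that $B^{[\lambda]} := \bigoplus_{n\geqslant 0} B^{[\lambda]}_n$ is a graded sub-$K$-algebra of $B$, which is exactly what ``graded linear series of $L$'' means.

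First, by construction each $B^{[\lambda]}_n$ is a $K$-vector subspace of $B_n = H^0(X, L^{\otimes n})$, since it is defined as a $K$-linear span. Also $B^{[\lambda]}_0 = K$ contains the unit, so the only nontrivial point is closure under multiplication, i.e.\ that the canonical map $B_m \otimes_K B_n \to B_{m+n}$ sends $B^{[\lambda]}_m \otimes B^{[\lambda]}_n$ into $B^{[\lambda]}_{m+n}$ for all $m, n \geqslant 0$.

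The key ingredient is the submultiplicativity of sup-norms on sections of a line bundle: for every embedding $\sigma: K \hookrightarrow \mathbb{C}$ and every pair $s \in H^0(X_\sigma, L^{\otimes m}_\sigma)$, $t \in H^0(X_\sigma, L^{\otimes n}_\sigma)$, one has $\|st\|_{\sigma,\sup} \leqslant \|s\|_{\sigma,\sup} \cdot \|t\|_{\sigma,\sup}$. Consequently, if $s$ and $t$ are ``small'' generators, meaning $\|s\|_{\sigma,\sup} \leqslant e^{-\lambda m}$ and $\|t\|_{\sigma,\sup} \leqslant e^{-\lambda n}$ for every $\sigma$, then
\[
\|st\|_{\sigma,\sup} \leqslant e^{-\lambda m} \cdot e^{-\lambda n} = e^{-\lambda(m+n)},
\]
so $st$ lies in the set of small sections of $B_{m+n}$ that generate $B^{[\lambda]}_{m+n}$.

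Finally, any element of $B^{[\lambda]}_m$ is of the form $\sum_i a_i s_i$ with $a_i \in K$ and $s_i$ small in degree $m$; similarly for $B^{[\lambda]}_n$. Bilinearity of the multiplication gives $(\sum_i a_i s_i)(\sum_j b_j t_j) = \sum_{i,j} a_i b_j (s_i t_j)$, and by the previous step each $s_i t_j$ is small in degree $m+n$. Hence the product lies in $\mathrm{Vect}_K$ of small sections of degree $m+n$, i.e.\ in $B^{[\lambda]}_{m+n}$. This proves the multiplicative closure, establishing that $B^{[\lambda]}$ is a graded sub-$K$-algebra of $B$. There is no serious obstacle here; the statement is essentially a tautology once submultiplicativity of sup-norms is invoked.
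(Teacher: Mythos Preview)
Your proof is correct and is precisely the direct verification the paper has in mind; the paper itself omits the argument, stating only that the property is ``straightforward from the definition,'' and your use of the submultiplicativity of sup-norms is exactly the point that makes it so.
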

Note that $B^{[0]}$ is noting but the graded
linear series generated by effective
sections. For any integer $n\geqslant 0$ and
any real number $\lambda$, denote by
$\mathscr B_n=\pi_*(\mathscr L^{\otimes n})$
and by $\mathscr B_n^{[\lambda]}$ the
saturation of $B_n^{[\lambda]}$ in $\mathscr
B_n$. We shall use the symbol $g_n$ to denote
the family of sup-norms on $\mathscr B_n$ or
on $\mathscr B_n^{[\lambda]}$. By definition,
for any integer $n\geqslant 1$ and any
$\lambda\in\mathbb R$, one has
\[B_n^{[\lambda]}=\mathcal F^M_{n\lambda}B_n,\]
where $\mathcal F^M$ is the minimum
filtration of $(\mathscr B_n,g_n)$.

 Since we have
assumed $\overline{\mathscr L}$ to be
arithmetically big, the line bundle $L$ is
also big (see \cite[Introduction]{Moriwaki07}
and \cite[Corollary 2.4]{Yuan07}). Hence by
Theorem \ref{Thm:Lazarsfeld-Mustata}, the
total graded linear series $B$ is
approximable. By Corollary 3.13, we obtain
that the sequence $(\frac 1n
e_{\max}(\mathscr B_n,g_n))_{n\geqslant 1}$
converges to a real number which we denote by
$\widehat{\mu}^{\pi}_{\max}(\overline{\mathscr
L})$. Note that, if $\overline M$ is a
Hermitian line bundle on $\Spec\mathcal O_K$,
then
\[\widehat{\mu}^{\pi}_{\max}(\overline{\mathscr
L }\otimes \pi^*(\overline
M))=\widehat{\mu}^{\pi}_{\max}(\overline{\mathscr
L })+\delta_K^{-1}\widehat{\deg}(\overline
M).\]

For any real number $\lambda$, denote by
$\overline{\mathcal O }_\lambda$ the
Hermitian line bundle on $\Spec\mathcal O_K$
whose underlying $\mathcal O_K$-module is
trivial, and such that
$\|\mathbf{1}\|_{\sigma}=e^{-\lambda}$ for
any $\sigma$. Note that the Arakelov degree
of $\overline{\mathcal O}_\lambda$ is
$\widehat{\deg}(\overline{\mathcal
O}_\lambda)=\delta_K\lambda$.

\begin{prop}
\label{Pro:total linear series is
approximable} Let $\lambda$ be a real number
such that
$\lambda<\widehat{\mu}^\pi_{\max}(\overline{\mathscr
L })$. Then the graded linear series
$B^{[\lambda]}$ contains an ample divisor,
and for sufficiently large $n$, one has
$B_n^{[\lambda]}\neq 0$.
\end{prop}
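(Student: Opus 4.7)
The proposition has two assertions: (a) $B_n^{[\lambda]}\neq 0$ for all sufficiently large $n$, and (b) $B^{[\lambda]}$ contains an ample divisor. For (a), fix an auxiliary $\mu\in(\lambda,\widehat{\mu}^\pi_{\max}(\overline{\mathscr L}))$. Because $e_{\max}(\mathscr B_p,g_p)/p\to\widehat{\mu}^\pi_{\max}(\overline{\mathscr L})$, for every $p\geqslant p_0$ there is $s_0\in\mathscr B_p\setminus\{0\}$ with $\|s_0\|_{\sigma,\sup}\leqslant e^{-\mu p}$, so $s_0\in B_p^{[\mu]}$. The bigness of $L$ on $X$ furnishes $r_0$ with $H^0(X,L^{\otimes r})\neq 0$ for every $r\geqslant r_0$; choose integral nonzero $\sigma_r\in\mathscr B_r$ for each $r\in[r_0,r_0+p)$. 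For $n\geqslant p_0+r_0$, decompose $n=kp+r$ with $k\geqslant 1$ and $r\in[r_0,r_0+p)$; the product $s_0^{\otimes k}\otimes\sigma_r\in\mathscr B_n\setminus\{0\}$ satisfies $\|s_0^{\otimes k}\otimes\sigma_r\|_{\sigma,\sup}\leqslant Ce^{-k\mu p}\leqslant e^{-\lambda n}$ once $n$ is large enough (using $\mu>\lambda$ and $kp/n\to 1$), where $C$ depends only on the finitely many $\sigma_r$, so $s_0^{\otimes k}\otimes\sigma_r\in B_n^{[\lambda]}$.

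For (b), pick a very ample line bundle $A$ on $X$ with a Hermitian model $\overline{\mathscr A}$ on $\mathscr X$ (after passing to a common birational modification if necessary). The core task is to produce, for some large integer $p$ and some $\mu'\in(\lambda,\widehat{\mu}^\pi_{\max}(\overline{\mathscr L}))$, a nonzero integral section $s\in H^0(\mathscr X,\mathscr M)$ with $\|s\|_{\sigma,\sup}\leqslant e^{-\mu'p}$ in the natural tensor metric $h_L^{\otimes p}\otimes h_A^{-1}$ on $\mathscr M:=\mathscr L^{\otimes p}\otimes\mathscr A^{-1}$. Granted such $s$, one has $L^{\otimes p}\cong A\otimes M$ on $X$ with $M:=\mathscr M_K$, and $s\in H^0(X,M)\setminus\{0\}$ is the candidate effective section. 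To verify that $t\mapsto ts^{\otimes n}$ sends $H^0(X,A^{\otimes n})$ into $B_{np}^{[\lambda]}$, I invoke the theory of successive minima (via Propositions~\ref{Pro:majoration de Fm par Fs} and~\ref{Pro:inversion comparison}): $H^0(X,A^{\otimes n})$ admits a $K$-basis of integral sections $(t_i)\subset H^0(\mathscr X,\mathscr A^n)$ with $\|t_i\|_{\sigma,\sup}\leqslant e^{-e_{\min}(\pi_*\mathscr A^n)+o(n)}$, and Lemma~\ref{Lem:emin minorer par mumin} bounds $e_{\min}(\pi_*\mathscr A^n)/n$ below by a finite constant $c_0$ depending only on $\overline{\mathscr A}$. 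Hence
\[
\|t_i\otimes s^{\otimes n}\|_{L^{np},\sigma,\sup}\leqslant\|t_i\|_{\sigma,\sup}\|s\|_{\sigma,\sup}^n\leqslant e^{-c_0n+o(n)-\mu'pn},
\]
which is $\leqslant e^{-\lambda pn}$ for every $n\geqslant 1$, provided $p$ was chosen at the outset so large that $(\mu'-\lambda)p$ exceeds $-c_0$ by a uniform margin.

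The main obstacle is thus the construction of $s$. The natural starting point is the short exact sequence
\[
0\longrightarrow\pi_*(\mathscr L^{\otimes p}\otimes\mathscr A^{-1})\longrightarrow\pi_*\mathscr L^{\otimes p}\longrightarrow\pi_*(\mathscr L^{\otimes p}|_D)\longrightarrow 0
\]
associated to a section of $\mathscr A$ cutting out an effective divisor $D$ (valid for $p$ large by Serre vanishing). The measure-theoretic interpretation of $B_p^{[\mu]}$ through the minimum filtration gives $\dim_K B_p^{[\mu]}\gtrsim p^{d-1}$ (since $\mu<\widehat{\mu}^\pi_{\max}(\overline{\mathscr L})$ forces $\nu_B([\mu,\infty))>0$), whereas $\dim H^0(D,L^p|_D)\sim p^{d-2}$; the resulting rank defect forces many small sections of $\overline{\mathscr L}^{\otimes p}$ to vanish along $D$, producing candidates in $\pi_*(\mathscr L^p\otimes\mathscr A^{-1})$ that are small in the metric \emph{induced} from $\pi_*\mathscr L^{\otimes p}$ rather than in $h_M$. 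To bridge the two metrics, which differ by an unbounded factor near $D$, I plan to impose simultaneous smallness against a generating family $t_{A,1},\ldots,t_{A,r}$ of global sections of $\overline{\mathscr A}$ (existing by very ampleness of $A$): the pointwise identity
\[
|s(x)|_M^2\sum_{i=1}^r|t_{A,i}(x)|_A^2=\sum_{i=1}^r|t_{A,i}(x)s(x)|_{L^p}^2
\]
combined with $\inf_x\sum_i|t_{A,i}(x)|_A^2>0$ converts the simultaneous smallness of every $\|t_{A,i}s\|_{L^p,\sigma,\sup}$ into the desired smallness of $\|s\|_{M,\sigma,\sup}$, at the cost of a fixed multiplicative constant. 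The existence of such $s$ for $p$ large then follows by iterating the rank-defect argument against each $\operatorname{div}(t_{A,i})$.
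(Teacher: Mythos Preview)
Your argument for (a) is correct and more explicit than the paper's (which simply quotes the arithmetic bigness criterion of \cite{Chen_bigness} applied to the twist $\overline{\mathscr L}\otimes\pi^*\overline{\mathcal O}_{-\lambda}$).

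Your argument for (b), however, has a real gap at the final step. Granting your rank estimate $\dim_K B_p^{[\mu']}\gtrsim p^{d-1}$, the single rank-defect against one divisor $D_1=\mathrm{div}(t_{A,1})$ does produce many $s\in H^0(X,M)$ with $t_{A,1}s\in B_p^{[\mu']}$, because $\dim H^0(D_1,L^p|_{D_1})=O(p^{d-2})$. But ``iterating'' against the remaining $D_i$ does \emph{not} cost only $O(p^{d-2})$ per step. Writing $V_i=\{s\in H^0(X,M):t_{A,i}s\in B_p^{[\mu']}\}$ and $\psi_i\colon s\mapsto t_{A,i}s$, the correct bound is
\[
\mathrm{codim}_{H^0(X,M)}(V_i)\;\leqslant\;\mathrm{codim}_{B_p}\bigl(B_p^{[\mu']}\bigr)\;=\;\dim B_p-\dim B_p^{[\mu']},
\]
and there is no reason for $B_p^{[\mu']}$ to be in any special position relative to $\mathrm{Im}(\psi_i)$. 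Intersecting the $r$ subspaces $V_i$ therefore forces
\[
r\bigl(1-\nu_B([\mu',\infty[)\bigr)<1,
\qquad\text{i.e.}\qquad
\nu_B([\mu',\infty[)>1-\tfrac{1}{r},
\]
for $\bigcap_iV_i\neq 0$. Since $r\geqslant 2$ (you need at least two sections for $\inf_x\sum|t_{A,i}(x)|_A^2>0$), this fails whenever $\lambda$ is taken in the upper half of the support of $\nu_B$, which is certainly allowed by the hypothesis $\lambda<\widehat{\mu}^\pi_{\max}(\overline{\mathscr L})$. So the construction of the small section $s$ of $\overline{\mathscr M}$ does not go through as sketched. (A secondary point: your input $\nu_B([\mu,\infty[)>0$ is true, but it is not established in the paper prior to this proposition; it is equivalent, via the twist $\overline{\mathscr L}\otimes\pi^*\overline{\mathcal O}_{-\mu}$, to the arithmetic-bigness criterion of \cite{Chen_bigness} that the paper itself invokes.)

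The paper's route avoids this difficulty entirely by not fixing $A$ in advance. One twists the metric by $e^{-\lambda}$ so that the problem becomes the case $\lambda=0$ for $\overline{\mathscr L}\otimes\pi^*\overline{\mathcal O}_{-\lambda}$, observes via \cite[Theorem~5.4]{Chen_bigness} that this bundle is arithmetically big, and then applies Yuan's decomposition \cite[Corollary~2.4]{Yuan07}: an arithmetically big Hermitian line bundle admits, after a power, a splitting $\overline{\mathscr A}\otimes\overline{\mathscr M}$ with $\overline{\mathscr A}$ arithmetically ample (in Zhang's sense) and $\overline{\mathscr M}$ effective. Arithmetic ampleness guarantees that $\bigoplus_nH^0(X,\mathscr A_K^{\otimes n})$ is generated by \emph{effective} sections of $\overline{\mathscr A}$; viewed back in $\overline{\mathscr L}^{\otimes p}$ these have sup-norm $\leqslant e^{-\lambda p}$, so the factorisation through $B^{[\lambda]}$ is immediate. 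In short, the paper lets the decomposition theorem choose $\overline{\mathscr A}$ compatibly with the arithmetic, whereas your attempt to force a prescribed $A$ to work runs into the codimension obstruction above.
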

\begin{proof}
Note that
$\widehat{\mu}^{\pi}_{\max}(\overline{\mathscr
L}\otimes\pi^*\overline{\mathcal
O}_{-\lambda})>0$. Since $L$ is big, by
\cite[Theorem 5.4]{Chen_bigness},
$\overline{\mathscr
L}\otimes\pi^*\overline{\mathcal
O}_{-\lambda}$ is arithmetically big.
Therefore, for sufficiently large $n$,
$\mathscr L^{\otimes n}$ has a non-zero
global section $s_n$ such that
$\|s_n\|_{\sigma,\sup}\leqslant e^{-\lambda
n}$ for any $\sigma:K\rightarrow\mathbb C$,
which proves that $B_n^{[\lambda]}\neq 0$.
Furthermore, since $\overline{\mathscr
L}\otimes\pi^*\overline{\mathcal
O}_{-\lambda}$ is arithmetically big, by
\cite[Corollary 2.4]{Yuan07}, there exists an
integer $p\geqslant 1$ and two Hermitian line
bundles $\overline{\mathscr A}$ and
$\overline{\mathscr M}$, such that
$\overline{\mathscr A}$ is ample in the sense
of Zhang \cite{Zhang95}, $\overline{\mathscr
M}$ has a non-zero effective global section
$s$, and that $(\overline{\mathscr
L}\otimes\pi^*\overline{\mathcal
O}_{-\lambda})^{\otimes
p}\cong\overline{\mathscr
A}\otimes\overline{\mathscr M}$. By taking
$p$ sufficiently divisible, we may assume
that the graded $K$-algebra
$\bigoplus_{n\geqslant 0}H^0(X,\mathscr
A_K^{\otimes n})$ is generated by effective
sections of $\mathscr A $. These sections,
viewed as sections of $\overline{\mathscr
A}\otimes\pi^*\overline{\mathcal
O}_\lambda^{\otimes p}$, have sup-norms
$\leqslant e^{-p\lambda}$. Therefore the
homomorphism
\[\bigoplus_{n\geqslant 0}H^0(X,\mathscr A_K)\longrightarrow
\bigoplus_{n\geqslant 0}H^0(X,L^{\otimes
np})\] induced by $s$ factors through
$\bigoplus_{n\geqslant 0}B_{np}^{[\lambda]}$.
\end{proof}

\begin{coro}\label{Cor:approximablility of mu lambda}
For any real number $\lambda$ such that
$\lambda<\widehat{\mu}_{\max}^\pi(\overline{\mathscr
L} )$, the graded linear series
$B^{[\lambda]}$ of $L$ is approximable.
\end{coro}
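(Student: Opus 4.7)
The plan is that this corollary should follow almost immediately by combining the preceding Proposition \ref{Pro:total linear series is approximable} with the Lazarsfeld--Musta\c{t}\v{a} approximation theorem (Theorem \ref{Thm:Lazarsfeld-Mustata}) recalled earlier. No new estimates are required; the work has already been done.

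More precisely, first I would observe that $B^{[\lambda]}$ is by construction a graded sub-$K$-algebra of the total section algebra $B=\bigoplus_{n\geqslant 0}H^0(X,L^{\otimes n})$, so it is a graded linear series of the big line bundle $L$ on the generic fiber $X$. Next, I would invoke Proposition \ref{Pro:total linear series is approximable}, whose hypothesis $\lambda<\widehat{\mu}^\pi_{\max}(\overline{\mathscr L})$ is exactly the one we are given, to obtain two conclusions: (i) the graded linear series $B^{[\lambda]}$ contains an ample divisor in the sense of \cite{Lazarsfeld_Mustata08}, and (ii) $B_n^{[\lambda]}\neq 0$ for all sufficiently large $n$.

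Having verified both hypotheses, I would then apply Theorem \ref{Thm:Lazarsfeld-Mustata} directly to $W=B^{[\lambda]}$, which yields that $B^{[\lambda]}$ is approximable in the sense of the definition of Section 2. This is the entire argument.

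There is no real obstacle here, since the geometric content (containment of an ample divisor, non-vanishing of $B_n^{[\lambda]}$ in large degrees) has already been extracted from the arithmetic bigness of $\overline{\mathscr L}\otimes\pi^*\overline{\mathcal O}_{-\lambda}$ via \cite[Theorem 5.4]{Chen_bigness} and \cite[Corollary 2.4]{Yuan07} in the proof of Proposition \ref{Pro:total linear series is approximable}. The role of the numerical hypothesis $\lambda<\widehat{\mu}^\pi_{\max}(\overline{\mathscr L})$ is precisely to guarantee the positivity $\widehat{\mu}^\pi_{\max}(\overline{\mathscr L}\otimes\pi^*\overline{\mathcal O}_{-\lambda})>0$ that triggers arithmetic bigness, and this is exactly what is exploited upstream. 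Thus the corollary is a clean packaging statement.
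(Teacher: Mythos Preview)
Your proposal is correct and matches the paper's own proof exactly: the paper simply states that the corollary is a direct consequence of Proposition \ref{Pro:total linear series is approximable} and Theorem \ref{Thm:Lazarsfeld-Mustata}. Your additional commentary unpacking the roles of arithmetic bigness and the hypothesis $\lambda<\widehat{\mu}^\pi_{\max}(\overline{\mathscr L})$ is accurate but goes beyond what the paper records.
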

\begin{proof}
This is a direct consequence of Proposition
\ref{Pro:total linear series is approximable}
and Theorem \ref{Thm:Lazarsfeld-Mustata}.
\end{proof}

\section{Arithmetic Fujita approximation}
\label{Sec:arithmetic fujita} In this
section, we establish the conjecture of
Moriwaki on the arithmetic analogue of Fujita
approximation. Let $\pi:\mathscr
X\rightarrow\Spec\mathcal O_K$ be an
arithmetic variety of total dimension $d$ and
$\overline{\mathscr L}$ be a Hermitian line
bundle on $\mathscr X$ which is
arithmetically big.

Write $L=\mathscr L_K$ and denote by
$B:=\bigoplus_{n\geqslant 0}H^0(X,L^{\otimes
n })$ the total graded linear series of $L$.
For any integer $n\geqslant 1$, let
$\overline{\mathscr B}_n$ be the $\mathcal
O_K$-module $\pi_*(\mathscr L^{\otimes n})$
equipped with sup-norms. Define by convention
$\overline{\mathscr B}_0$ as the trivial
Hermitian line bundle on $\Spec\mathcal O_K$.
Denote by
$\widehat{\mu}_{\max}^\pi(\overline{\mathscr
L
})=\displaystyle\lim_{n\rightarrow\infty}\textstyle\frac
1n e_{\max}(\overline{\mathscr B}_n)$.

For any real number $\lambda$, let
$B^{[\lambda]}$ be the graded linear series
of $L$ defined in \eqref{Equ:Blambda}. For
any integer $n\geqslant 0$, let
$\overline{\mathscr B}_n^{[\lambda]}$ be the
saturation of $B_n^{[\lambda]}$ in $B_n$
equipped with induced metrics. For any
integer $p\geqslant 1$ such that
$B^{[0]}_p\neq 0$, let $B^{(p)}$ be the
graded sub-$K$-algebra of $B$ generated by
$B^{[0]}_p$. For any integer $n\geqslant 1$,
let $\overline{\mathscr B}_{np}^{(p)}$ be the
saturated Hermitian vector subbundle of
$\overline{\mathscr B}_{np}$ such that
${\mathscr B}^{(p)}_{np,K}=B^{(p)}_{np}$.
\begin{theo}\label{The:main theorem}
The following equality holds:
\[\widehat{\mathrm{vol}}(\overline{\mathscr L})=\sup_{p}\widehat{\mathrm{vol}}\big(B^{(p)}\big),\]
where $B^{(p)}$ is the graded linear series
of $L$ generated by $B_p^{[0]}$ defined
above.
\end{theo}
\begin{proof}
For any integer $n\geqslant 1$, let
$\nu_n=T_{\frac 1n}\nu_{(B_n,\mathcal F^M)}$,
where $\mathcal F^M$ is the minimum
filtration of $\overline{\mathscr B}_n$. We
have shown in Proposition
\ref{Pro:convergence pour approximable
algebra} that the sequence
$(\nu_n)_{n\geqslant 1}$ converges vaguely to
a Borel probability measure which we denote
by $\nu$. Similarly, for any integer
$n\geqslant 1$, let $\nu_n^{(p)}=T_{\frac
1{np}}\nu_{(B_{np}^{(p)},\mathcal F^M)}$. The
sequence $(\nu_n^{(p)})_{n\geqslant 1}$ also
converges vaguely to a Borel probability
measure which we denote by $\nu^{(p)}$.

For any subdivision
$D:0=t_0<t_1<\cdots<t_m<\widehat{\mu}^{\pi}_{\max}(\overline{\mathscr
L })$ of the interval
$[0,\widehat{\mu}^\pi_{\max}(\overline{\mathscr
L})[$ such that
\begin{equation}\label{Equ:condition on subdivision}\nu(\{t_1,\cdots,t_m\})=0,\end{equation}
denote by $h_D:\mathbb R\rightarrow\mathbb R$
the function such that
\[h_D(x)=\sum_{i=0}^{m-1}t_{i}\indic_{[t_{i},t_{i+1}[}(x)
+t_{m}\indic_{[t_{m},\infty[}(x).\] After
Corollary \ref{Cor:approximablility of mu
lambda}, for any $\varepsilon>0$, there
exists a sufficiently large integer
$p=p(\varepsilon,D)\geqslant 1$ such that
$B^{(p)}$ approximates simultaneously all
algebras $B^{[t_i]}$ ($i\in\{0,\cdots,m\}$).
That is, there exists $N_0\in\mathbb N$ such
that, for any $n\geqslant N_0$, one has
\[\inf_{0\leqslant i\leqslant m}\frac{\rang\big(\Image(S^nB^{[t_i]}_p\rightarrow
B^{[t_i]}_{np})\big)}{\rang(B_{np}^{[t_i]})}\geqslant
1-\varepsilon.
\]
We then obtain that
\[\rang(\mathcal F^{M}_{npt_i}B_{np}^{(p)})
\geqslant\rang\big(\Image(S^nB^{[t_i]}_p\rightarrow
B^{[t_i]}_{np})\big)\geqslant
(1-\varepsilon)\rang(B_{np}^{[t_i]}).\] Note
that
\[\begin{split}&\quad\;
np\rang(B_{np}^{(p)})\int_{\mathbb
R}\max\{t,0\}\,\nu_{np}^{(p)}(\mathrm{d}t)=
-\int_{\mathbb R}\max\{
t,0\}\,\mathrm{d}\rang(\mathcal
F^{M}_{t}B_{np}^{(p)})\\
&\geqslant\sum_{i=0}^{m-1}npt_i
\Big(\rang(\mathcal
F^{M}_{npt_i}B_{np}^{(p)})-\rang(\mathcal
F^{M}_{npt_{i+1}}B_{np}^{(p)})\Big)
+npt_m\rang( \mathcal
F^{M}_{npt_m}B_{np}^{(p)}).
\end{split}
\]
By Abel summation formula, one obtains
\[\begin{split}
&\rang(B_{np}^{(p)})\int_{\mathbb R}
\max\{t,0\}\,\nu_{np}^{(p)}(\mathrm{d}t)
\geqslant
\sum_{i=1}^{m}(t_i-t_{i-1})\rang(\mathcal
F^{M}_{npt_i}B_{np}^{(p)})\\
&\qquad\geqslant
(1-\varepsilon)\sum_{i=1}^{m}(t_i-t_{i-1})
\rang(B_{np}^{[t_i]}).\end{split}\] Still by
Abel summation formula, one gets
\[\rang(B_{np}^{(p)})\int_{\mathbb
R}\max\{t,0\}\,
\nu_{np}^{(p)}(\mathrm{d}t)\geqslant
(1-\varepsilon)\rang_{K}(B_{np})\int
h_D(x)\,\nu_{np}(\mathrm{d}t).\] By
\eqref{Equ:integral x+ est volume}, one has
\[\begin{split}
&\quad\;\lim_{n\rightarrow\infty}\frac{\delta_Kd}{(np)^{d-1}/(d-1)!}
\rang(B_{np}^{(p)})\int_{\mathbb
R}\max\{t,0\}\nu_{np}^{(p)}(\mathrm{d}t)\\
&=\delta_Kd\mathrm{vol}(B^{(p)})\int_{\mathbb
R
}\max\{t,0\}\,\nu^{(p)}(\mathrm{d}t)=\widehat{\mathrm{vol}}(B^{(p)}).
\end{split}\]
Therefore,\[
\begin{split}
\widehat{\mathrm{vol}}(B^{(p)})&\geqslant
\lim_{n\rightarrow\infty}\frac{\delta_K
d}{(np)^{d-1}/(d-1)!}(1-\varepsilon)\rang(B_{np})
\int_{\mathbb
R}h_D\,\mathrm{d}\nu_{np}\\
&= \delta_Kd(1-\varepsilon)
\mathrm{vol}(L)\int_{\mathbb R}
h_D\,\mathrm{d}\nu,
\end{split}
\]
where the equality follows from \cite[IV \S5
$n^\circ$12 Proposition 22]{Bourbaki65}.
Choose a sequence of subdivisions
$(D_j)_{j\in\mathbb N }$ verifying the
condition \eqref{Equ:condition on
subdivision} and such that $h_{D_j}(t)$
converges uniformly to
$\max\{t,0\}-\max\{t-\widehat{\mu}_{\max}^\pi(\overline{\mathscr
L }),0\}$ when $j\rightarrow\infty$, one
obtains
\[\widehat{\mathrm{vol}}(B^{(p)})\geqslant \delta_Kd(1-\varepsilon)
\mathrm{vol}(L)\int_{\mathbb R}
\max\{t,0\}\,\nu(\mathrm{d}t)=(1-\varepsilon)\widehat{\mathrm{vol}}
(\overline{\mathscr L}),\] thanks to
\eqref{Equ:integral x+ est volume}. The
theorem is thus proved.
\end{proof}

In the following, we explain why Theorem
\ref{The:main theorem} implies the Fujita's
arithmetic approximation theorem in the form
conjectured by Moriwaki. Our strategy is
quite similar to Corollary \ref{Cor:geometric
fujita appr}, except that the choice of
metrics on the approximating invertible sheaf
requires rather subtle analysis on the
superadditivity of probability measures
associated to a filtered graded algebra,
which we put in the appendix.

\begin{theo}[Arithmetic Fujita approximation]
For any $\varepsilon>0$, there exists a
birational morphism $\nu:\mathscr
X'\rightarrow\mathscr X$, an integer
$p\geqslant 1$ together with an decomposition
$\nu^*\overline{\mathscr L}^{\otimes p
}\cong\overline{\mathscr
A}\otimes\overline{\mathscr M}$ such that
\begin{enumerate}[1)]
\item $\overline{\mathscr M}$ is effective and
$\overline{\mathscr A}$ is arithmetically
ample;
\item one has $p^{-d}\widehat{\mathrm{vol}}(\overline{\mathscr
A
})\geqslant\widehat{\mathrm{vol}}(\overline{\mathscr
L })-\varepsilon$.
\end{enumerate}
\end{theo}
\begin{proof} By \cite[Theorem
4.3]{Moriwaki07}, we may assume that
$\mathscr X$ is generically smooth. For any
integer $p\geqslant 1$ such that
$B_p^{[0]}\neq 0$, let $\phi_p:\mathscr
X_p\rightarrow\mathscr X$ be the blow up
(twisted by $\mathscr L$) of $\mathscr X$
along the base locus of $\mathscr B_p^{[0]}$.
In other words, $\mathscr X_p$ is defined as
\[\mathscr X_p=\mathrm{Proj}\Big(\Image\Big(
\bigoplus_{n\geqslant 0}\pi^*\mathscr
B_p^{(np)}\longrightarrow\bigoplus_{n\geqslant
0 }\mathscr L^{\otimes np}\Big)\Big).\] Let
$\mathscr A_p=\mathcal O_{\mathscr X_p}(1)$
and $\mathscr M_p$ be the invertible sheaf
defined by the exceptional divisor. Let $s$
be the global section of $\mathscr M_p$ which
trivializes $\mathscr M_p$ outside the
exceptional divisor. By definition, one has
$\phi_p^*\mathscr L^{\otimes p}\cong\mathscr
A_p\otimes\mathscr M_p $. On the other hand,
the canonical homomorphism
$\phi_p^*\pi^*\mathscr B_p^{[0]}\rightarrow
\mathscr A_p$ induces a morphism
$i_p:\mathscr X_p\rightarrow\mathbb
P(\mathscr B_p^{[0]})$ such that
$i_p^*(\mathcal L_p)\cong\mathscr A_p$, where
$\mathcal L_p=\mathcal O_{\mathbb P(\mathscr
B_p^{[0]})}(1)$. The restriction of global
sections of $\mathcal L_p^{\otimes n}$ gives
an injective homomorphism
\begin{equation}\label{Equ:Image dans H0}\mathrm{Im}(S^n\mathscr
B_p^{[0]}\rightarrow \mathscr
B_{np})=\mathscr B^{(p)}_{np} \longrightarrow
H^0(\mathscr X_p,\mathscr A_p^{\otimes
n}),\end{equation} where the last $\mathcal
O_K$-module is considered as a submodule of
$H^0(\mathscr X_p,\phi_p^*\mathscr L^{\otimes
p })$ via $s$.

For any integer $n\geqslant 1$ and any
embedding $\sigma:K\rightarrow\mathbb C$,
denote by $\|\cdot\|_{\sigma,n}$ the quotient
Hermitian norm on $\mathscr A_{p,\sigma}$
induced by the surjective homomorphism
$\phi_p^*\pi^*\mathscr
B_{np}^{(p)}\rightarrow\mathscr A_p^{\otimes
n }$, where on $\mathscr B_{np}^{(n)}$ we
have chosen the John norm
$\|\cdot\|_{\sigma,\mathrm{John}}$ associated
to the sup-norm $\|\cdot\|_{\sigma,\sup}$
(see \cite[\S 4.2]{Gaudron07} for details).
Thus the Hermitian norms on $\mathscr A_p$
are positive and smooth. Now let
$\sigma:K\rightarrow\mathbb C$ be an
embedding and $x$ be a complex point of
$\mathscr X_p$ outside the exceptional
divisor. It corresponds to an one-dimensional
quotient of $B_{p,\sigma}^{[0]}$, which
induces, for any integer $n\geqslant 1$, an
one-dimensional quotient $l_{n,x}$ of
$B_{np,\sigma}^{(p)}$. By classical result on
convex bodies in Banach space, there exists
an affine hyperplane parallel to the
$\mathrm{Ker}(B_{np,\sigma}^{(p)}\rightarrow
l_{n,x})$ and tangent to the closed unit ball
of $B_{np,\sigma}^{(p)}$. In other words,
there exists $v\in B_{np,\sigma}^{(p)}$ whose
image in $\mathscr A_{p,\sigma}^{\otimes
n}(x)$ has norm
$\|v\|_{\sigma,\mathrm{John}}\geqslant\|v\|_{\sigma,\sup}$.
Note that, as a section of $L_\sigma^{\otimes
n}$ over $X_\sigma(\mathbb C )$, one has
$\|v_x\|_\sigma\leqslant
\|v\|_{\sigma,\sup}$. Hence, for any section
$u$ of $\mathscr A_{p,\sigma}$ over a
neighbourhood of $x$, one has
$\|u_x\|_{\sigma,n}\geqslant\|u_x\otimes
s_x\|_{\sigma}$. Therefore, if we equip
$\mathscr A_p$ with metrics
$\alpha_n=(\|\cdot\|_{\sigma,n})_{\sigma:K\rightarrow\mathbb
C}$ and define $(\mathscr
M_p,\beta_n):=\phi_p^*\overline{\mathscr
L}\otimes(\mathscr A_p,\alpha_n)^\vee$. Then
the section $s$ of $\mathscr M_p$ is an
effective section. For any integer
$n\geqslant 1$, one has
\[p^{-d}\widehat{\vol}(\mathscr
A_p,\alpha_n)\geqslant\widehat{\vol}(B^{(p)},\alpha_n).\]
Note that, for any
$\sigma:K\rightarrow\mathbb C$ and any
element $v\in B_{np,\sigma}^{(p)}$ considered
as a section in $H^0(\mathscr
X_{p,\sigma}(\mathbb C),\mathscr
A_{p,\sigma}^{\otimes n})$ via
\eqref{Equ:Image dans H0}, the sup-norms of
$v$ relatively to the metrics in $\alpha_n$
are bounded from above by the John norms of
$v$ considered as a section of $L_\sigma$
corresponding to the sup-norms induced by the
norms of $\overline{\mathscr L}$. Thus
Corollary \ref{Cor:Appendix} combined with
\eqref{Equ:integral x+ est volume} implies
that\[\sup_{n}\widehat{\mathrm{vol}}(B^{(p)},\alpha_n)
\geqslant\widehat{\mathrm{vol}}(B^{(p)}).\]
Therefore, by Theorem \ref{The:main theorem},
for any $\varepsilon>0$, there exist certain
integers $p\geqslant 1$ and $n\geqslant 1$
such that
$p^{-d}\widehat{\mathrm{vol}}(\mathscr
A_p,\alpha_n)\geqslant
\mathrm{vol}(\overline{\mathscr
L})-\varepsilon$. Here $(\mathscr
A_p,\alpha_n)$ is rather nef and big since it
is generated by effective global sections.
However, a slight perturbation of
$\overline{\mathscr L}$ permits to conclude.
\end{proof}

\section{Approximating subalgebras}
\label{SubSec:un exemple}

We keep the notation in \S
\ref{Sec:arithmetic fujita}. In this section,
we show that if a positive finite generated
subalgebra of $\overline{\mathscr B}$
approximates well the arithmetic volume of
$\overline{\mathscr L}$, then it also
approximates well the asymptotic measure of
$\overline{\mathscr L}$ truncated at $0$.

Let $p\geqslant 1$ be an integer. Assume that
$\overline{\mathscr L}^{\otimes p}$ is
decomposed as $\overline{\mathscr
A}\otimes\overline{\mathscr M}$, where
$\overline{\mathscr A}$ is arithmetically
ample and $\overline{\mathscr M}$ has a
non-zero effective section $s$. Through the
section $s$ we may consider the section
algebra $\bigoplus_{n\geqslant
0}H^0(X,\mathscr A_K^{\otimes n})$ as a
graded sub-$K$-algebra of $B$. As
$\overline{\mathscr A}$ is ample, for
sufficiently large $n$, one has
$H^0(X,\mathscr A_K^{\otimes
n})\subset\mathcal F^{M}_0(B_{np})$.

\begin{prop}\label{Pro:comparaison de vol}
Let $p\geqslant 1$ be an integer and $S$ be a
graded subalgebra of $B$ generated by a
subspace of $B_p$. For any integer
$n\geqslant 1$, let $\overline{\mathscr S}_n$
be the saturated sub-$\mathcal O_K$-module of
$B_n$, equipped with induced metrics, and
such that $\mathscr S_{n,K}=S_n$; let
$\nu_{\overline{\mathscr S}_n}$ be the
measure associated to the minimum filtration
of $\overline{\mathscr S}_n$. Denote by $\nu$
the vague limit of the measure sequence
$(T_{\frac 1{np}}\nu_{\overline{\mathscr
S}_{np}})_{n\geqslant 1}$. The for any
$x\in\mathbb R$, one has
\begin{equation}\label{Equ:comparaison des mesure}\mathrm{vol}(S)\nu([x,+\infty[)\leqslant\mathrm{vol}(L
)\nu_{\overline{\mathscr
L}}([x,+\infty[),\end{equation} where
$\nu_{\overline{\mathscr L}}$ is the vague
limit of $(T_{\frac
1n}\nu_{\overline{\mathscr B}_n})_{n\geqslant
1 }$, $\nu_{\overline{\mathscr B}_n}$ being
the measure associated to the minimum
filtration of $\overline{\mathscr B}_n $.
Furthermore, if $e_{\min}(\overline{\mathscr
S}_{np})>0$ holds for sufficiently large $n$,
then
\begin{equation}\label{Equ:volume de S plus petit que vol de L}
\mathrm{vol}(S)\leqslant\mathrm{vol}(L)
\nu_{\overline{\mathscr L}}([0,+\infty[)
\end{equation}
\end{prop}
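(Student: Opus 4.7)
The plan is to reduce everything to a pointwise rank inequality on the minimum filtration and then pass to the limit through the vague convergence of the two measure sequences that are furnished by the statement.

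First I would note that since the metrics on $\overline{\mathscr S}_n$ are induced from those on $\overline{\mathscr B}_n$ and $S_n\subset B_n$, the minimum filtration on $\mathscr S_{n,K}$ is the restriction of the minimum filtration on $B_n$. In particular, for every $t\in\mathbb R$,
\[
\rang(\mathcal F^M_tS_n)\leqslant\rang(\mathcal F^M_tB_n).
\]
Specialising to $n=np$ and $t=npx$, and unwinding the definition of the operator $T_{\frac 1{np}}$, this inequality rewrites as
\[
\rang(S_{np})\cdot(T_{\frac 1{np}}\nu_{\overline{\mathscr S}_{np}})([x,+\infty[)
\leqslant
\rang(B_{np})\cdot(T_{\frac 1{np}}\nu_{\overline{\mathscr B}_{np}})([x,+\infty[).
\]

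Next I would divide through by $(np)^{d-1}/(d-1)!$ and let $n\to\infty$. Since $B$ is approximable (Theorem \ref{Thm:Lazarsfeld-Mustata}), Proposition \ref{Pro:bigness for graded algebras} gives $\rang(B_{np})/((np)^{d-1}/(d-1)!)\to\mathrm{vol}(L)$, and since $S$ is a finitely generated integral graded algebra concentrated in multiples of $p$, the analogue for $S$ yields the limit $\mathrm{vol}(S)$ (vanishing automatically when $d(S)<d-1$, in which case the claim is empty). For any $x$ which is not an atom of $\nu$ nor of $\nu_{\overline{\mathscr L}}$, vague convergence of probability measures to a probability measure is equivalent to weak convergence (by tightness), hence the portmanteau theorem gives
\[
(T_{\frac 1{np}}\nu_{\overline{\mathscr S}_{np}})([x,+\infty[)\longrightarrow\nu([x,+\infty[),\qquad
(T_{\frac 1{np}}\nu_{\overline{\mathscr B}_{np}})([x,+\infty[)\longrightarrow\nu_{\overline{\mathscr L}}([x,+\infty[),
\]
so the claimed inequality \eqref{Equ:comparaison des mesure} holds at such $x$. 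To propagate it to an arbitrary $x\in\mathbb R$, I would invoke the left-continuity of $y\mapsto\mu([y,+\infty[)$ for a Borel measure $\mu$: approaching $x$ from the left through continuity points of both $\nu$ and $\nu_{\overline{\mathscr L}}$ (which form a dense subset of $\mathbb R$) gives the inequality at $x$ by passage to the limit on each side.

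For the second assertion, the hypothesis $e_{\min}(\overline{\mathscr S}_{np})>0$ for sufficiently large $n$ says exactly that $\mathcal F^M_0S_{np}=S_{np}$, whence $(T_{\frac 1{np}}\nu_{\overline{\mathscr S}_{np}})([0,+\infty[)=1$. Applying the portmanteau theorem to the closed set $[0,+\infty[$ and using that $\nu$ is a probability measure, I obtain $\nu([0,+\infty[)\geqslant 1$, so $\nu([0,+\infty[)=1$. Plugging $x=0$ into \eqref{Equ:comparaison des mesure} then yields \eqref{Equ:volume de S plus petit que vol de L}. The only delicate step is the interchange of vague limits with evaluation on the (possibly non-continuity) half-line $[0,+\infty[$, which is exactly what the portmanteau inequality for closed sets provides; the rest is bookkeeping with the scaling operator $T_{\frac 1{np}}$ and the asymptotic volumes.
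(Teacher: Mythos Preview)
Your argument is correct and follows the same route as the paper: establish the rank inequality $\rang(\mathcal F^M_{npx}S_{np})\leqslant\rang(\mathcal F^M_{npx}B_{np})$, rewrite it via the rescaled measures, and pass to the limit; the paper does this in one line, whereas you justify the limit carefully via the portmanteau theorem at continuity points and extend by left-continuity of $y\mapsto\mu([y,+\infty[)$. One minor remark: the minimum filtration on $S_n$ need not literally be the \emph{restriction} of that on $B_n$ (one only has $\mathcal F^M_tS_n\subset S_n\cap\mathcal F^M_tB_n$), but the inclusion $\mathcal F^M_tS_n\subset\mathcal F^M_tB_n$ is all you use, so the argument stands.
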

\begin{proof}
 For any $x\in\mathbb R$, one
has
\[\rang( S_{np})\nu_{\overline{\mathscr S}_{np}}([npx,+\infty[)
\leqslant\rang(B_{np})\nu_{\overline{\mathscr
B}_{np}}([npx,+\infty[),\] since these two
quantities are respectively the ranks of
$\mathcal F_{npx}^{M}S_{np}$ and $\mathcal
F_{npx}^{M}B_{np}$. By passing
$n\rightarrow+\infty$, one obtains that, for
any $x\in\mathbb R$,
\[\mathrm{vol}(S)\nu([x,+\infty[)\leqslant\mathrm{vol}(L
)\nu_{\overline{\mathscr L}}([x,+\infty[).\]
Since the positivity condition on last minima
implies that $\nu([0,+\infty[)=1$, one
obtains \eqref{Equ:volume de S plus petit que
vol de L}.
\end{proof}

\begin{coro}With the notation of Proposition \ref{Pro:comparaison de
vol}, assume that
\[\widehat{\mathrm{vol}}(S):=\lim_{n\rightarrow\infty}\frac{\widehat{\deg}(\overline{\mathscr S}_{np})}
{(np)^d/d!}\geqslant
(1-\varepsilon)\widehat{\mathrm{vol}}(\overline{\mathscr
L }),\] where $0<\varepsilon<1$ is a
constant. Then one has
\begin{equation}\label{Equ:approximation de mesure}
0\leqslant\delta_Kd\int_0^{+\infty}
\Big[\mathrm{vol}(L )\nu_{\overline{\mathscr
L}}([x,+\infty[)-\mathrm{vol}(S)\nu([x,+\infty[)\Big]\,\mathrm{d}x\leqslant
\varepsilon\widehat{\mathrm{vol}}(\overline{\mathscr
L }).\end{equation}
\end{coro}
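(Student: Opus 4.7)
The plan is to verify both inequalities in \eqref{Equ:approximation de mesure} separately. The left one, asserting nonnegativity of the integrand on $[0,+\infty[$, is immediate from the pointwise bound \eqref{Equ:comparaison des mesure} of Proposition \ref{Pro:comparaison de vol}. The content of the statement is therefore the upper bound by $\varepsilon\widehat{\mathrm{vol}}(\overline{\mathscr L})$; I would obtain it by identifying each of the two integrals with an arithmetic volume and then invoking the hypothesis $\widehat{\mathrm{vol}}(S)\geqslant(1-\varepsilon)\widehat{\mathrm{vol}}(\overline{\mathscr L})$.

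First, by Fubini's theorem (no integrability concerns since both $\nu_{\overline{\mathscr L}}$ and $\nu$ have compact support), each tail integral rewrites as
\[\int_0^{+\infty}\nu_{\overline{\mathscr L}}([x,+\infty[)\,\mathrm{d}x=\int_{\mathbb R}\max\{x,0\}\,\nu_{\overline{\mathscr L}}(\mathrm{d}x),\]
and likewise for $\nu$. Applying \eqref{Equ:integral x+ est volume} to the total graded linear series $B$ of $L$ yields directly
\[\delta_Kd\,\mathrm{vol}(L)\int_0^{+\infty}\nu_{\overline{\mathscr L}}([x,+\infty[)\,\mathrm{d}x=\widehat{\mathrm{vol}}(\overline{\mathscr L}).\]
The analogous identity for $S$ requires more care, because in the statement $\widehat{\mathrm{vol}}(S)$ is defined through $\widehat{\deg}(\overline{\mathscr S}_{np})$ rather than $\widehat{h}^0$. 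Since $S$ is generated by a finite-dimensional subspace of $B_p$, it is an approximable graded algebra, so Proposition \ref{Pro:convergence pour approximable algebra} applies to $S$ and gives
\[\delta_Kd\,\mathrm{vol}(S)\int_{\mathbb R}\max\{x,0\}\,\nu(\mathrm{d}x)=\lim_{n\rightarrow\infty}\frac{\widehat{h}^0(\overline{\mathscr S}_{np})}{(np)^d/d!}.\]
The hypothesis $e_{\min}(\overline{\mathscr S}_{np})>0$ for large $n$ then permits the replacement of $\widehat{h}^0$ by $\widehat{\deg}$: using Lemma \ref{Lem:comparaison des h0} (with $g=g'$), together with the comparison of minima and slopes developed in Section 3 to see that the Harder--Narasimhan measure of $\overline{\mathscr S}_{np}$ is concentrated in $[-O(\log\rang(S_{np})),+\infty[$, one obtains $\widehat{\deg}(\overline{\mathscr S}_{np})-\widehat{h}^0(\overline{\mathscr S}_{np})=O(\rang(S_{np})\log\rang(S_{np}))=O((np)^{d-1}\log(np))$, which is negligible after division by $(np)^d/d!$. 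Consequently the two candidate limits coincide.

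Subtracting the two identities, the integral in \eqref{Equ:approximation de mesure} becomes exactly $\widehat{\mathrm{vol}}(\overline{\mathscr L})-\widehat{\mathrm{vol}}(S)$, which the hypothesis bounds by $\varepsilon\widehat{\mathrm{vol}}(\overline{\mathscr L})$. The main obstacle is the reconciliation of $\widehat{h}^0$ and $\widehat{\deg}$ in the definition of $\widehat{\mathrm{vol}}(S)$: one must combine the positivity of $e_{\min}(\overline{\mathscr S}_{np})$ with the asymptotic coincidence of minimum and slope filtrations from Section 3 in order to verify that the correction term is of lower order than the normalization $(np)^d/d!$; once this is done, the remainder of the argument is a direct Fubini calculation together with the master equation \eqref{Equ:integral x+ est volume}.
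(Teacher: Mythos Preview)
Your proof is correct and follows the same approach as the paper: rewrite each tail integral via Fubini as $\int\max\{x,0\}\,\mathrm{d}\nu$, apply \eqref{Equ:integral x+ est volume} to both $B$ and $S$, and subtract. You are in fact more careful than the paper's own proof, which simply writes ``Similarly'' for the identity $\widehat{\mathrm{vol}}(S)=\delta_Kd\,\mathrm{vol}(S)\int_0^{+\infty}\nu([x,+\infty[)\,\mathrm{d}x$ without addressing the $\widehat{\deg}$ versus $\widehat{h}^0$ discrepancy or the implicit use of the positivity hypothesis $e_{\min}(\overline{\mathscr S}_{np})>0$ that you correctly flag.
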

\begin{proof}
By \eqref{Equ:integral x+ est volume}, one
obtains
\[\widehat{\mathrm{vol}}(\overline{\mathscr L})=
\delta_Kd\,\mathrm{vol}(L)\int_{\mathbb
R}\max\{t,0\}\nu_{\overline{\mathscr
L}}(\mathrm{d}t)
=\delta_Kd\,\mathrm{vol}(L)\int_{0}^{+\infty}
\nu_{\overline{\mathscr
L}}([x,+\infty[)\,\mathrm{d}x.\] Similarly,
\[\widehat{\mathrm{vol}}(S)=\delta_Kd\,\mathrm{vol}(S)\int_{0}^{+\infty}
\nu([x,+\infty[)\,\mathrm{d}x.\] Hence the
inequality \eqref{Equ:approximation de
mesure} results from \eqref{Equ:comparaison
des mesure}.
\end{proof}

\appendix
\section{Comparison of filtered graded algebras}

Let $B=\bigoplus_{n\geqslant 0}$ be an
integral graded algebra of finite type over
an infinite field $K$ and $f:\mathbb
N\rightarrow\mathbb R$ be a function such
that
$\displaystyle\lim_{n\rightarrow+\infty}f(n)/n=0$.
We suppose that $B_1\neq 0$ and that $B$ is
generated as $K$-algebra by $B_1$. Assume
that each $B_n$ is equipped with an $\mathbb
R$-filtration $\mathcal F$ such that $B$
becomes a $f$-quasi-filtered graded algebra
(see \S \ref{Subsec:converges de mesure} for
definition). For all integers $m,n\geqslant
0$, let $\mathcal F^{(m)}$ be another
$\mathbb R$-filtration on $B_n$ such that $B$
equipped with $\mathbb R$-filtrations
$\mathcal F^{(m)}$ is $f$-quasi-filtered. For
all integers $m,n\geqslant 1$, let
$\nu_n=T_{\frac 1n} \nu_{(B_n,\mathcal F)}$
and $\nu_n^{(m)}=T_{\frac
1n}\nu_{(B_n,\mathcal F^{(m)} )}$. Assume in
addition that $\lambda_{\max}(B_n,\mathcal
F)\ll n$ and $\lambda_{\max}(B_n,\mathcal
F^{(m)})\ll_mn$. By Theorem
\ref{Thm:convergence de mesures}, the
sequence of measures
$(\nu_n^{(m)})_{n\geqslant 1}$ (resp.
$(\nu_n)_{n\geqslant 1}$) converges vaguely
to a Borel probability which we denote by
$\nu^{(m)}$ (resp. $\nu$).

The purpose of this section is to establish
the following comparison result:
\begin{prop}\label{Pro:A_1}
Let $\varphi$ be an increasing, concave and
Lipschitz function on $\mathbb R$. Assume
that, for any $m\geqslant 1$ and any
$t\in\mathbb R$, one has $\mathcal
F_tB_m\subset\mathcal F_t^{(m)}B_m$, then
\begin{equation}\label{Equ:comparaison des integrale}\limsup_{m\rightarrow+\infty}\int_{\mathbb
R} \varphi\,\mathrm{d}\nu^{(m)}\geqslant
\int_{\mathbb R
}\varphi\,\mathrm{d}\nu.\end{equation}
\end{prop}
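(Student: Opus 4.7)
The plan is to compare $\nu^{(m)}$ with $\nu$ via an intermediate multiplicatively generated filtration on the finitely generated subalgebra $B^{[m]} := \bigoplus_{k\geqslant 0} B_{km}$, which is generated by $B_m$ since $B$ is generated by $B_1$. I equip it with the multiplicatively generated filtration
\[ \mathcal{F}^{(m),*}_t B_{km} := \mathrm{Vect}_K\bigl\{x_1 \cdots x_k : x_i \in B_m,\ \textstyle\sum_{i=1}^k \lambda^{(m)}(x_i) \geqslant t\bigr\}, \]
so that $(B^{[m]}, \mathcal{F}^{(m),*})$ is a $0$-quasi-filtered finitely generated algebra, whence Theorem \ref{Thm:convergence de mesures} supplies a vague limit $\mu^{(m)} = \lim_k T_{1/k} \nu_{(B_{km},\mathcal{F}^{(m),*})}$. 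The $f$-quasi-filtered property of $\mathcal{F}^{(m)}$ yields the inclusion $\mathcal{F}^{(m),*}_t B_{km} \subset \mathcal{F}^{(m)}_{t-kf(m)} B_{km}$, and Proposition \ref{Pro:mesure et comparaison def iltration} applied to the identity on $B_{km}$ gives $\nu_{(B_{km},\mathcal{F}^{(m)})} \succ \tau_{-kf(m)} \nu_{(B_{km},\mathcal{F}^{(m),*})}$. Normalising by $T_{1/(km)}$ and letting $k \to \infty$, I obtain
\[ \nu^{(m)} \succ \tau_{-f(m)/m}\, T_{1/m}\, \mu^{(m)}. \]

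The main obstacle is the concavity-based comparison $\int \varphi\,d(T_{1/m}\mu^{(m)}) \geqslant \int \varphi\,d\nu_m^{(m)}$, where $\nu_m^{(m)} := T_{1/m}\nu_{(B_m,\mathcal{F}^{(m)})}$. Intuitively, $\mu^{(m)}$ captures the asymptotic distribution of averages of $\lambda^{(m)}$-values of $k$ factors in $B_m$, and for concave $\varphi$ such averages dominate the base measure in the reversed convex order. Concretely, I would analyse the surjection $\pi_k: S^k B_m \twoheadrightarrow B_{km}$ equipped with the tensor filtration on the source: a basis element of $S^k B_m$ indexed by a multiset has $\lambda^\otimes$-value equal to the sum of the entries' $\lambda^{(m)}$-values, and Jensen's inequality applied to the normalised weights gives $\int \varphi\,d(T_{1/k}\nu_{(S^k B_m,\mathcal{F}^\otimes)}) \geqslant \int \varphi\,d\nu_{(B_m,\mathcal{F}^{(m)})}$ for any concave $\varphi$. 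The delicate point is transferring this inequality to the quotient measure on $B_{km}$ via the exact sequence $0 \to \ker\pi_k \to S^k B_m \to B_{km} \to 0$, applying Proposition \ref{Pro:suite exacte courte} and Corollary \ref{Cor:mesure d'un sous espace} to control the contribution of $\ker\pi_k$. This is where the concavity of $\varphi$ is essential and corresponds to the ``superadditivity of probability measures'' alluded to in the introduction.

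Granting this comparison, the rest is bookkeeping. The Lipschitz constant $L$ of $\varphi$ absorbs the translation $-f(m)/m \to 0$, and the hypothesis $\mathcal{F}_t B_m \subset \mathcal{F}^{(m)}_t B_m$ yields $\nu_m^{(m)} \succ \nu_m := T_{1/m}\nu_{(B_m,\mathcal{F})}$ via Proposition \ref{Pro:mesure et comparaison def iltration} applied to the identity of $B_m$ with shift zero, so $\int \varphi\,d\nu_m^{(m)} \geqslant \int \varphi\,d\nu_m$ for increasing $\varphi$. Combining,
\[ \int \varphi\,d\nu^{(m)} \geqslant \int \varphi\,d\nu_m - L\,f(m)/m. \]
Letting $m \to \infty$, the vague convergence $\nu_m \to \nu$ furnished by Theorem \ref{Thm:convergence de mesures} (together with the uniform upper bound on supports from $\lambda_{\max}(B_n,\mathcal{F}) \ll n$ and Lipschitz control of $\varphi$) gives $\int \varphi\,d\nu_m \to \int \varphi\,d\nu$, whence $\limsup_m \int \varphi\,d\nu^{(m)} \geqslant \int \varphi\,d\nu$, which is \eqref{Equ:comparaison des integrale}.
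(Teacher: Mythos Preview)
Your overall architecture is right and matches the paper: reduce the comparison of $\nu^{(m)}$ and $\nu$ to a Jensen-type inequality at level $m$, then use the $f$-quasi-filtered structure to propagate to the limit. You also correctly locate the crux of the argument, namely the step
\[
\int \varphi\,\mathrm{d}(T_{1/m}\mu^{(m)})\ \geqslant\ \int \varphi\,\mathrm{d}\nu_m^{(m)},
\]
which you call ``the delicate point''. But the route you sketch for this step does not close. The Jensen inequality you write down holds on $S^kB_m$ with the tensor filtration, and Proposition~\ref{Pro:suite exacte courte} only tells you that the measure on $B_{km}$ is a convex combination of the measures on $\ker\pi_k$ and on the image. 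Nothing prevents $\ker\pi_k$ from containing precisely the high-$\lambda$ part of $S^kB_m$, in which case the quotient integral drops below the base integral; concavity of $\varphi$ does not help here, since the inequality you need goes in the wrong direction once you remove a subspace with large $\lambda$-values. Corollary~\ref{Cor:mesure d'un sous espace} gives an $O(\varepsilon)$ bound only when $\varepsilon=1-\rang(V)/\rang(W)$ is small, and $\rang(\ker\pi_k)/\rang(S^kB_m)$ is typically bounded away from zero.

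The paper sidesteps this obstruction by Noether normalisation: one passes to a polynomial subalgebra $A\subset B$ (generated by $A_1$), for which $S^kA_m\to A_{km}$ is an isomorphism, so there is no kernel and the Jensen argument applies directly (this is the content of \cite[Proposition~3.3.3]{Chen08} invoked in the paper). One then needs the additional fact, taken from \cite[Proof of Theorem~3.4.3]{Chen08}, that the induced filtrations on $A$ have the \emph{same} limit measures $\nu^{(m)}$ and $\nu$ as those on $B$. With that reduction in hand, the paper obtains exactly your final chain
\[
\int\varphi\,\mathrm{d}\nu_{nm}^{(m)}\ \geqslant\ \int\varphi\,\mathrm{d}\nu_m^{(m)}-\|\varphi\|_{\mathrm{Lip}}\frac{f(m)}{m}\ \geqslant\ \int\varphi\,\mathrm{d}\nu_m-\|\varphi\|_{\mathrm{Lip}}\frac{f(m)}{m},
\]
and lets $n\to\infty$, then $m\to\infty$. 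So the missing idea is the reduction to the polynomial case; once you insert it, your bookkeeping paragraph is exactly what the paper does.
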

\begin{proof}
By Noether's normalization theorem, there
exists a graded subalgebra $A$ of $B$ such
that $A$ is isomorphic to the polynomial
algebra generated by $A_1$. We still use
$\mathcal F^{(m)} $ (resp. $\mathcal F$) to
denote the induced filtrations on $A$. Let
$\widetilde{\nu}_n=T_{\frac
1n}\nu_{(A_n,\mathcal F)}$ and
$\widetilde{\nu}_{n}^{(m)}=T_{\frac
1n}\nu_{(A_n,\mathcal F^{(m)})}$. For any
integer $m\geqslant 1$ and any $t\in\mathbb
R$, one still has $\mathcal
F_tA_m\subset\mathcal F_t^{(m)}A_m$.
Furthermore, by \cite[Proof of Theorem
3.4.3]{Chen08}, the sequence of measures
$(\widetilde{\nu}_n^{(m)})_{n\geqslant 1}$
(resp. $(\widetilde{\nu}_n)_{n\geqslant 1}$)
converges vaguely to $\nu^{(m)}$ (resp.
$\nu$). Therefore, we may suppose that $B=A$
is a polynomial algebra. In this case,
\cite[Proposition 3.3.3]{Chen08} implies that
\[nm\int
\varphi\,\mathrm{d}\nu_{nm}^{(m)} \geqslant
nm\int
\varphi\,\mathrm{d}\nu_{m}^{(m)}-n\|\varphi\|_{\mathrm{Lip}}
f(m)\geqslant nm\int
\varphi\,\mathrm{d}\nu_m-n\|\varphi\|_{\mathrm{Lip}}
f(m).\] since $\nu_m^{(m)}\succ\nu_m$. By
passing $n\rightarrow\infty$, we obtain
\[\int\varphi\,\mathrm{d}\nu^{(m)}\geqslant\int\varphi\,
\mathrm{d}\nu_m-n\|\varphi\|_{\mathrm{Lip}}\frac{f(m)}{m},\]
which implies \eqref{Equ:comparaison des
integrale}.
\end{proof}

In the following, we apply Proposition
\ref{Pro:A_1} to study algebras in metrized
vector bundles. From now on, $K$ denotes a
number field. We assume given an $\mathcal
O_K$-algebra $\mathscr
B=\bigoplus_{n\geqslant 0}\mathscr B_n$,
generated by $\mathscr B_1$, and such that
\begin{enumerate}[1)]
\item each $\mathscr B_n$ is a projective $\mathcal
O_K$-module of finite type;
\item for any integer $n\geqslant 0$, $B_n=\mathscr
B_{n,K}$;
\item the algebra structure of $\mathscr B$
is compatible to that of $B$.
\end{enumerate}
For each integer $n\geqslant 1$, assume that
$g$ is a family of norms on $\mathscr B_n$
such that $(\mathscr B_n,g)$ becomes a
metrized vector bundle on $\Spec\mathcal
O_K$. For all integers $n\geqslant 1$ and
$m\geqslant 1$, let $g^{(m)}$ be another
metric structure on $\mathscr B_n$ such that
$(\mathscr B_n,g^{(m)})$ is also a metrized
vector bundle on $\Spec\mathcal O_K$. Denote
by $\nu_{(B_n,g)}$ and $\nu_{(B_n,g^{(m)})}$
be the measure associated to the minimum
filtration of $(B_n,g)$ and of
$(B_n,g^{(m)})$, respectively.

\begin{coro}\label{Cor:Appendix}
With the notation above, assume in addition
that
\begin{enumerate}[1)]
\item $(\mathscr B,g)$ and all $(\mathscr
B,g^{(m)})$ verify the three conditions in
Remark \ref{Rem:condition de rumely};
\item the identity homomorphism $\mathrm{Id}:(\mathscr B_m,g)\rightarrow
(\mathscr B_m,g^{(m)})$ is effective (see \S
\ref{SubSec:slope method}).
\end{enumerate}
Let $\nu$ and $\nu^{(m)}$ be respectively the
limit measure of $(T_{\frac
1n}\nu_{(B_n,g)})_{n\geqslant 1}$ and
$(T_{\frac 1n}\nu_{(\mathscr
B_n,g^{(m)})})_{n\geqslant 1}$. Then for any
increasing, concave and Lipschtiz function
$\varphi$ on $\mathbb R$, one has
\[\limsup_{m\rightarrow\infty}\int_{\mathbb R}
\varphi\,\mathrm{d}\nu^{(m)}\geqslant\int_{\mathbb
R }\varphi\,\mathrm{d}\nu.\] In particular,
if
${\displaystyle\liminf_{n\rightarrow\infty}}\frac
1n e_{\min}(\mathscr B_n,g)\geqslant 0$ and
if
${\displaystyle\liminf_{m\rightarrow\infty}\liminf_{n\rightarrow\infty}}
\frac 1n e_{\min}(\mathscr
B_n,g^{(m)})\geqslant 0$, then
\begin{equation}\label{Equ:comparaison de volume}
\limsup_{m\rightarrow\infty}\int_{\mathbb
R} \max\{x,0\}\,\nu^{(m)}(\mathrm{d}x)
\geqslant \int_{\mathbb
R}\max\{x,0\}\,\nu(\mathrm{d}x).\end{equation}
\end{coro}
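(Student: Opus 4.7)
The plan is to apply Proposition~\ref{Pro:A_1} to the minimum filtrations induced by $g$ and $g^{(m)}$, and then to pass from general increasing concave Lipschitz $\varphi$ to $\varphi(x)=\max\{x,0\}$ via a truncation argument.

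First I would translate hypothesis~(2) into a filtration inclusion. The condition that $\mathrm{Id}\colon(\mathscr B_m,g)\to(\mathscr B_m,g^{(m)})$ is effective means exactly that $\|s\|_{g^{(m)},\sigma}\leqslant\|s\|_{g,\sigma}$ for all $s\in\mathscr B_{m,\sigma,\mathbb C}$ and all $\sigma$; hence $\mathbb B((\mathscr B_m,g),a)\subset\mathbb B((\mathscr B_m,g^{(m)}),a)$ for every $a>0$, and taking $K$-spans yields
\[
\mathcal F^M_tB_m\subset (\mathcal F^{(m)})^M_tB_m\qquad\text{for every }t\in\mathbb R,
\]
where $\mathcal F^M$, $(\mathcal F^{(m)})^M$ are the minimum filtrations attached to $g$ and $g^{(m)}$.

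Next I would check the remaining hypotheses of Proposition~\ref{Pro:A_1}. By hypothesis~(1), Remark~\ref{Rem:condition de rumely} applies to both $(\mathscr B,g)$ and to each $(\mathscr B,g^{(m)})$: the minimum filtrations make $B$ into a $0$-quasi-filtered graded algebra (so we may take $f\equiv 0$, which satisfies $f(n)/n\to 0$ trivially), and $\lambda_{\max}(B_n,\mathcal F^M)\ll n$, $\lambda_{\max}(B_n,(\mathcal F^{(m)})^M)\ll_m n$ from condition~(c). Integrality and generation of $B$ in degree~$1$ come from the $\mathcal O_K$-algebra hypothesis on $\mathscr B$, so Proposition~\ref{Pro:A_1} gives directly
\[
\limsup_{m\rightarrow\infty}\int_{\mathbb R}\varphi\,\mathrm d\nu^{(m)}\geqslant\int_{\mathbb R}\varphi\,\mathrm d\nu
\]
for every increasing, concave, Lipschitz $\varphi$.

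Finally, for \eqref{Equ:comparaison de volume} I would use a truncation. For each $A>0$ set $\varphi_A(x):=\min(x,A)$; this function is increasing, concave, and $1$-Lipschitz, so the previous step yields $\limsup_m\int\varphi_A\,\mathrm d\nu^{(m)}\geqslant\int\varphi_A\,\mathrm d\nu$. Since $\varphi_A(x)\leqslant\max\{x,0\}$ for every $x\in\mathbb R$, I obtain
\[
\int\varphi_A\,\mathrm d\nu\leqslant \limsup_m\int\max\{x,0\}\,\mathrm d\nu^{(m)}.
\]
To finish I need to show $\int\varphi_A\,\mathrm d\nu\to\int\max\{x,0\}\,\mathrm d\nu$ as $A\to\infty$. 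The hypothesis $\liminf_n\frac{1}{n}e_{\min}(\mathscr B_n,g)\geqslant 0$ forces the support of $\nu_n=T_{\frac1n}\nu_{(B_n,g)}$ eventually to lie in $[-\varepsilon,\infty)$ for any prescribed $\varepsilon>0$, and by vague convergence (Portmanteau for open half-lines) the support of $\nu$ is contained in $[0,\infty)$; on the other hand $\lambda_{\max}/n$ is bounded, so the support is also bounded above. Hence $\varphi_A(x)\uparrow x=\max\{x,0\}$ on the compact support of $\nu$, and monotone convergence gives the desired limit. Letting $A\to\infty$ concludes the proof.

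The step requiring the most care is the last one: one must correctly interchange the limits $A\to\infty$ and $\limsup_m$, and one must justify that the $e_{\min}$ hypothesis on $g$ transfers from the asymptotic lower bound on the finite-level measures $\nu_n$ to a genuine support statement on the vague limit $\nu$. Once this is in place the argument is straightforward, and the hypothesis $\liminf_m\liminf_n\frac1n e_{\min}(\mathscr B_n,g^{(m)})\geqslant 0$ is not actually needed for the inequality in the stated direction—it is there so that $\nu^{(m)}$ is also supported in $[0,\infty)$, making the right-hand side of \eqref{Equ:comparaison de volume} a genuine arithmetic volume.
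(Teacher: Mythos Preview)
Your argument is correct. The verification of the hypotheses of Proposition~\ref{Pro:A_1} (in particular the translation of effectivity of $\mathrm{Id}$ into the filtration inclusion $\mathcal F^M_tB_m\subset(\mathcal F^{(m)})^M_tB_m$, and the $0$-quasi-filtered property from Remark~\ref{Rem:condition de rumely}) matches the paper, which simply records the first assertion as a ``direct consequence''.

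For the second assertion your route differs from the paper's. The paper applies the first part with the linear function $\varphi(x)=x$ (increasing, concave, $1$-Lipschitz) to obtain $\limsup_m\int x\,\mathrm d\nu^{(m)}\geqslant\int x\,\mathrm d\nu$, and then converts \emph{both} sides to integrals of $\max\{x,0\}$: on the right using $\mathrm{supp}(\nu)\subset[0,\infty)$, and on the left by setting $b_m=\min\bigl(0,\liminf_n\frac1n e_{\min}(\mathscr B_n,g^{(m)})\bigr)$ and observing that $\bigl|\int\max(x,b_m)\,\mathrm d\nu^{(m)}-\int\max(x,0)\,\mathrm d\nu^{(m)}\bigr|\leqslant|b_m|\to 0$. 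Thus the paper genuinely uses the second $e_{\min}$ hypothesis. Your truncation $\varphi_A(x)=\min(x,A)$ instead exploits the pointwise bound $\varphi_A\leqslant\max\{x,0\}$ on the $\nu^{(m)}$ side and monotone convergence on the $\nu$ side, so you avoid that hypothesis entirely; your remark that it is not needed for the stated inequality is therefore a small sharpening of the paper's argument. Both approaches are short; yours trades the explicit error term $|b_m|$ for a limit in $A$, at the cost of needing the upper support bound on $\nu$ (which is available from $\lambda_{\max}(B_n)/n=O(1)$).
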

\begin{proof}
The first assertion is a direct consequence
of Proposition \ref{Pro:A_1}. In particular,
one has
\[\limsup_{m\rightarrow\infty}\int_{\mathbb R}
\max\{x,0\}\,\nu^{(m)}(\mathrm{d}x) \geqslant
\int_{\mathbb
R}\max\{x,0\}\,\nu(\mathrm{d}x).\] The
hypoth\`ese
${\displaystyle\liminf_{n\rightarrow\infty}}\frac
1n e_{\min}(\mathscr B_n,g)\geqslant 0$
implies that the support of $\nu$ is bounded
from below by $0$, so $\int_{\mathbb
R}\max\{x,0\}\,\nu(\mathrm{d}x)=\int_{\mathbb
R}x\,\nu(\mathrm{d}x)$. For any integer
$m\geqslant 1$, let
$a_m={\displaystyle\liminf_{n\rightarrow\infty}}\frac
1n e_{\min}(\mathscr B_n,g_m)$ and
$b_m=\min(a_m,0)$. One has $\int_{\mathbb
R}x\,\nu^{(m)}(\mathrm{d}x)=\int_{\mathbb R}
\max(x,b_m)\,\nu^{(m)}(\mathrm{d}x)$. Note
that
\[\bigg|\int_{\mathbb R}
\max(x,b_m)\,\nu^{(m)}(\mathrm{d}x)-
\int_{\mathbb R}
\max(x,0)\,\nu^{(m)}(\mathrm{d}x)\bigg|\leqslant
b_m,
\]
which converges to $0$ when
$m\rightarrow\infty$, so we obtain
\eqref{Equ:comparaison de volume}.
\end{proof}

\backmatter
\bibliography{chen}
\bibliographystyle{smfplain}

\end{document}